\newtheorem{theorem}{Theorem}[section]
\newtheorem{lemma}[theorem]{Lemma}
\newtheorem{proposition}[theorem]{Proposition}
\theoremstyle{definition}
\newtheorem*{ThmA}{Theorem A}
\newenvironment{enumeratei}{\begin{enumerate}[\upshape (a)]}
    {\end{enumerate}}
\def\irr#1{{\rm Irr}(#1)}
\def\cent#1#2{{\bf C}_{#1}(#2)}
\def\syl#1#2{{\rm Syl}_#1(#2)}
\def\nor{\trianglelefteq\,}
\def\oh#1#2{{\bf O}_{#1}(#2)}
\def\zent#1{{\bf Z}(#1)}
\def\aut#1{{\rm Aut}(#1)}
\def\out#1{{\rm Out}(#1)}
\def\fit#1{{\bf F}(#1)}
\def\frat#1{{\bf \Phi}(#1)}
\newcommand{\N}{{\mathbb N}}
\def\irr#1{{\rm Irr}(#1)}
\def\cent#1#2{{\bf C}_{#1}(#2)}
\def\syl#1#2{{\rm Syl}_#1(#2)}
\def\nor{\trianglelefteq\,}
\def\norm#1#2{{\bf N}_{#1}(#2)}
\def\oh#1#2{{\bf O}_{#1}(#2)}
\def\Oh#1#2{{\bf O}^{#1}(#2)}
\def\zent#1{{\bf Z}(#1)}
\def\aut#1{{\rm Aut}(#1)}
\def\out#1{{\rm Out}(#1)}
\def\fit#1{{\bf F}(#1)}
\def\lay#1{{\bf E}(#1)}
\def\fitg#1{{\bf F^*}(#1)}
\def\GF#1{{\rm GF}(#1)}
\def\SL#1{{\rm SL}_{2}(#1)}
\def\PSL#1{{\rm PSL}_{2}(#1)}
\def\GL#1#2{{\rm GL}_{#1}(#2)}
\def\V#1{{\rm V}(#1)}
\def\E#1{{\rm E}(#1)}
\def\irr#1{{\rm Irr}(#1)}
\def\cd#1{{\rm cd}(#1)}
\def\cent#1#2{{\bf C}_{#1}(#2)}
\def\syl#1#2{{\rm Syl}_#1(#2)}
\def\oh#1#2{{\bf O}_{#1}(#2)}
\def\Oh#1#2{{\bf O}^{#1}(#2)}
\def\zent#1{{\bf Z}(#1)}
\def\ker#1{{\rm ker}(#1)}
\def\norm#1#2{{\bf N}_{#1}(#2)}
\mathchardef\coso="2023
\def\ww#1{{#1}^{\coso}}
\def\cE{\overline{\rm E}}
\def \nq{\mathcal{N}_q}
\begin{document}

\title{On the character degree graph of finite groups}

\author[Z. Akhlaghi et al.]{Zeinab Akhlaghi}
\address{Zeinab Akhlaghi, Faculty of Math. and Computer Sci., \newline Amirkabir University of Technology (Tehran Polytechnic), 15914 Tehran, Iran.}
\email{z\_akhlaghi@aut.ac.ir}

\author[]{Carlo Casolo}
\address{Carlo Casolo, Dipartimento di Matematica e Informatica U. Dini,\newline
Universit\`a degli Studi di Firenze, viale Morgagni 67/a,
50134 Firenze, Italy.}
\email{carlo.casolo@unifi.it}

\author[]{Silvio Dolfi}
\address{Silvio Dolfi, Dipartimento di Matematica e Informatica U. Dini,\newline
Universit\`a degli Studi di Firenze, viale Morgagni 67/a,
50134 Firenze, Italy.}
\email{dolfi@math.unifi.it}

\author[]{Emanuele Pacifici}
\address{Emanuele Pacifici, Dipartimento di Matematica F. Enriques,
\newline Universit\`a degli Studi di Milano, via Saldini 50,
20133 Milano, Italy.}
\email{emanuele.pacifici@unimi.it}

\author[]{Lucia Sanus}
\address{Lucia Sanus, Departament d'\`Algebra, Facultat de
 Matem\`atiques, \newline
Universitat de Val\`encia,
46100 Burjassot, Val\`encia, Spain.}
\email{lucia.sanus@uv.es}

\thanks{The research of the second, third and fourth author is partially supported by the Italian PRIN 2015TW9LSR\_006 ``Group Theory and Applications' }

\subjclass[2000]{20C15}

\begin{abstract} Given a finite group \(G\), let \(\cd G\) denote the set of degrees of the irreducible complex characters of \(G\). The \emph{character degree graph} of \(G\) is defined as the simple undirected graph whose vertices are the prime divisors of the numbers in \(\cd G\), two distinct vertices \(p\) and \(q\) being adjacent if and only if \(pq\) divides some number in \(\cd G\). In this paper, we consider the complement of the character degree graph, and we characterize the finite groups for which this complement graph is not bipartite. This extends the analysis of \cite{ACDKP}, where the solvable case was treated.
\end{abstract}

\maketitle

\section{Introduction}
Character Theory is a fundamental tool in the study of finite groups; in fact, it is well known that the set of the irreducible complex characters (the ordinary character table) of a finite group reflects the structure of the group very deeply. However, even a much smaller set of data, that can be extracted from the character table of a finite group \(G\), turns out to be very relevant: this is the \emph{degree set} \(\cd G\), whose elements are the degrees (i.e., the values at the identity element) of the irreducible characters of \(G\).

Starting from the famous Ito-Michler Theorem, which establishes that a given prime \(p\)  does not divide any number in \(\cd G\) if and only if \(G\) has an abelian normal Sylow \(p\)-subgroup, many results in the literature demonstrate the deep interplay between the group structure of \(G\) and the arithmetical structure of \(\cd G\). The \emph{character degree graph} (or \emph{degree graph}) \(\Delta(G)\) is a useful tool in order to capture the arithmetical structure of the degree set: this is the simple undirected graph whose vertices are the prime divisors of the numbers in \(\cd G\), and two (distinct) vertices \(p\), \(q\) are adjacent if and only if \(pq\) divides some number in \(\cd G\). 
The main questions in this research area concern the relationships between the group structure of \(G\) and certain graph-theoretical features of \(\Delta(G)\) (we refer the reader to the survey \cite{Lew}). 

One of the turning point for the investigation on the character degree graph is the ``Three-Vertex Theorem" by P.P. P\'alfy (\cite{PPP1}): if \(G\) is a finite \emph{solvable} group then, for any choice of three vertices of \(\Delta(G)\), two of these vertices are adjacent. Considering the complement \({\overline\Delta}(G)\) of \(\Delta(G)\) (i.e., the graph having the same vertex set, where two vertices are adjacent if and only if they are not adjacent in \(\Delta(G)\)), Palfy's theorem can be rephrased by saying that  \({\overline\Delta}(G)\) does not contain any triangle whenever \(G\) is a finite solvable group. In a recent paper (\cite{ACDKP}) this was extended by showing that, under the same solvability assumption, \({\overline\Delta}(G)\) does not contain any cycle of odd length, which is equivalent to say that \({\overline\Delta}(G)\) is a bipartite graph. 

The present work explores this context without the solvability assumption.
In general, the graph \({\overline\Delta}(G)\) may contain cycles of odd length; this happens, for instance, for any \(2\)-dimensional projective special linear group over a finite field in characteristic \(2\) with at least four elements (see Proposition~\ref{PSL2}). The point is that the existence of such a cycle restricts the structure of the group significantly. The main result of this paper characterizes this situation.

\begin{ThmA}
Let \(G\) be a finite group, and  let $\pi$ be a subset of the vertex set of \(\Delta(G)\) such that $|\pi|$ is an odd number larger than \(1\). Then $\pi$ is the set of vertices of a cycle in ${\overline{\Delta}}(G)$ if and only if
  $\Oh{\pi'}G = S \times A$, where  $A$ is abelian, $S \simeq \SL{u^{\alpha}}$ or $ S \simeq \PSL{u^{\alpha}}$ for a  prime $u \in \pi$ and a positive integer $\alpha$, and the primes in \(\pi\setminus\{u\}\) are alternately odd divisors of $u^{\alpha}+1$ and $u^{\alpha} -1$. 
\end{ThmA}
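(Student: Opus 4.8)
The plan is to prove both implications after one Clifford-theoretic reduction, which is also what makes $\Oh{\pi'}G$ the natural object. Put $N:=\Oh{\pi'}G$. Since $G/N$ is a $\pi'$-group, for $\chi\in\irr G$ lying over $\theta\in\irr N$ the index $\chi(1)/\theta(1)$ divides a power of $|G:N|$ and is therefore a $\pi'$-number; hence $\chi(1)_\pi=\theta(1)_\pi$. It follows that the $\pi$-parts of the degrees of $G$ and of $N$ coincide, that $\pi\sbs V(\Delta(N))$, and that two primes of $\pi$ are adjacent in $\Delta(G)$ if and only if they are adjacent in $\Delta(N)$; equivalently the edges of $\overline{\Delta}(G)$ and $\overline{\Delta}(N)$ on the vertex set $\pi$ agree. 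As the operator $G\mapsto\Oh{\pi'}G$ is idempotent, we replace $G$ by $N$ and reduce the whole statement to the case $G=\Oh{\pi'}G$: then $\pi$ is the vertex set of a cycle of $\overline{\Delta}(G)$ if and only if $G=S\times A$ as described.

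For the backward direction, suppose $G=S\times A$ with the stated alternating datum. As $A$ is abelian, $\cd G=\cd S$, so it suffices to read off $\Delta(S)$. Writing $q=u^{\alpha}$, the degree sets of $\SL q$ and $\PSL q$ are classical: the only degree divisible by $u$ is the Steinberg degree $q$, a power of $u$; apart from $u$, the vertices of $\Delta(S)$ are the primes dividing $q-1$ and those dividing $q+1$, the odd ones among them forming two cliques (from $q-1$ and from $q+1$) with no edge between the two and no edge to $u$, whereas $2$ (for $q$ odd) is joined to every odd prime of $q^{2}-1$ and so has $\overline{\Delta}$-degree $1$, its unique neighbour being $u$. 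Thus $\overline{\Delta}(S)$, away from the pendant vertex $2$, is the complete tripartite graph with parts $\{u\}$, the odd primes of $q-1$, and the odd primes of $q+1$. Because the alternating hypothesis supplies equally many primes on the two nontrivial sides, the closed walk through $u$ that alternates between them is a cycle of $\overline{\Delta}(G)$ with vertex set $\pi$ (for $q$ a power of $2$ this is Proposition~\ref{PSL2}). The same picture shows, conversely, that in this graph every odd cycle must pass through $u$, must alternate between the two nontrivial parts, and cannot meet the pendant $2$; this is the source of the ``odd'' and ``alternately'' clauses.

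For the forward direction assume $G=\Oh{\pi'}G$ and that $\pi$ is the vertex set of an odd cycle of $\overline{\Delta}(G)$. First, $G$ is nonsolvable: by \cite{ACDKP} the complement degree graph of a solvable group is bipartite (this rests on P\'alfy's theorem \cite{PPP1}), so it has no odd cycle. We then study $G$ via its generalized Fitting subgroup $\fitg G=\fit G\lay G$. The core of the proof is to show that the layer $\lay G$ is a single component $L$, quasisimple with $L/\zent L\simeq\PSL{u^{\alpha}}$, and that $\fit G$ together with the rest of $G$ centralizes $L$, so that, using $G=\Oh{\pi'}G$, one obtains $G=S\times A$ with $S\in\{\SL{u^{\alpha}},\PSL{u^{\alpha}}\}$ and $A$ abelian. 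The mechanism is that any two primes of $\pi$ dividing a common degree are $\Delta(G)$-adjacent, hence never consecutive in the cycle; so a second component, or a contribution of $\fit G$, or a diagonal action would create products of degrees and thereby extra $\Delta(G)$-edges among the primes of $\pi$, destroying the thin cyclic structure. Matching this against the degree graphs of the nonabelian simple groups (via CFSG) leaves $\PSL{u^{\alpha}}$ as the only simple group with non-bipartite complement degree graph and identifies $L$; reading the resulting cycle inside $\overline{\Delta}(S)$ as above gives the alternating description of $\pi\setminus\{u\}$.

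The main obstacle is precisely this structural core: forcing a \emph{unique} component of type $\PSL{u^{\alpha}}$ and a trivial contribution from every other section, all at once. An odd cycle is a rigid yet global constraint---it prescribes non-adjacencies spread around the whole of $\pi$---so it cannot be localized to one chief factor by a naive induction; the proof must interlock the Clifford-theoretic control of the $\pi$-parts of degrees with the CFSG-based classification of simple groups having non-bipartite complement degree graph, and then exclude the various central or direct products that might a priori reproduce the same cycle. By contrast, verifying the $\SL{u^{\alpha}}$/$\PSL{u^{\alpha}}$ degree data (including the role of the prime $2$) and the elementary combinatorics of odd cycles in complete tripartite graphs is routine.
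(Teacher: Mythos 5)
Your reduction to $G=\Oh{\pi'}G$ and your treatment of the backward implication are correct and coincide with the paper's own (which quotes \cite[Corollary (11.29)]{Is} and Proposition~\ref{PSL2}). The forward implication, however, is a programme rather than a proof, and the gap sits exactly where you yourself place ``the main obstacle''. Your argument for the structural core is the heuristic that a second component, a contribution of $\fit G$, or a diagonal action ``would create products of degrees and thereby extra edges'', after which CFSG identifies the unique component. This mechanism silently assumes that $\lay G\neq 1$, i.e.\ that there is a component to identify. But a non-solvable group can perfectly well have $\fitg G=\fit G$ (take $G=VH$ with $H$ non-solvable acting faithfully on an elementary abelian normal subgroup $V$; such a $G$ has no subnormal quasisimple subgroups), and in that case there are no products of degrees coming from a second component to exploit: the non-adjacencies of the cycle translate instead into the module-theoretic condition $\nq$ (every stabilizer of a non-trivial point of the dual module contains a normal Sylow $q$-subgroup). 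Ruling out an odd cycle for such component-free groups is the technical heart of the paper: it requires the analysis of $\nq$-actions in Section~3 (Proposition~\ref{Nq2} on the Fitting subgroup and the solvable radical, Lemmas~\ref{Nq3}, \ref{Nq4} and \ref{gl62} excluding $\nq$-modules for $\PSL{2^{\alpha}}$, for $J_1$, and in dimension $2^6$ --- all classification-dependent), the reduction Lemma~\ref{AmazingReduction}, and the two special cases Lemma~\ref{specialcase} and Lemma~\ref{non-abelian}. Nothing in your sketch replaces this, so the claim that $\lay G$ is a single component of type $\PSL{u^{\alpha}}$ is unsupported precisely in the hardest configuration.

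There is a second, smaller gap: even granting a composition factor $H/K\simeq\PSL{u^{\alpha}}$ with $\pi\subseteq\pi(H/K)$ (the paper's Proposition~\ref{SectionPSL}), pinning down $\Oh{\pi'}G=S\times A$ exactly still needs argument. One must show $G=\cent G{H/K}\,H$ (via Proposition~\ref{W} together with $G=\Oh{\pi'}G$), that the centralizer is abelian over $K$, and --- after reducing to $G$ perfect --- that any abelian chief factor below $H$ is central; the paper does this last step with Lemma~\ref{newirreduciblemodule}, running through the subgroup structure of $\PSL{u^{\alpha}}$ to manufacture irreducible characters whose degrees are divisible by forbidden sets of primes. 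Your phrase ``exclude the various central or direct products that might a priori reproduce the same cycle'' names this step but does not carry it out; note that a non-split or non-central extension of an abelian $K$ by $\PSL{u^{\alpha}}$ cannot be dismissed by edge-counting alone, since one must first produce the characters witnessing the extra adjacencies.
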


(In the statement above, \(\Oh{\pi'}G\) denotes the smallest normal subgroup of \(G\) whose index in \(G\) is a \(\pi'\)-number.) We remark that, in \cite{DKP}, the same class of groups is characterized by the stronger property that ${\overline{\Delta}}(G)$ contains a triangle, that is, \(\Delta(G)\) violates Palfy's ``three-vertex condition". Therefore, Theorem~A is an improvement of the main result of that paper, which in turn generalizes several previously known results concerning the character degree graph (see \cite[Corollaries B,C,D]{DKP}). Namely, the theorem by A. Moret\'o and P.H. Tiep (\cite{MT}) establishing that, for any choice of four vertices in \(\Delta(G)\), two of them are adjacent, is an immediate consequence of our results. The same holds for the theorem by M.L. Lewis and D.L. White that characterizes the finite groups whose degree graph has three connected components (\cite{LW1,LW2}), as well as  the bound on the diameter of the degree graph in the connected case (\cite{LW0}).

As another application of Theorem~A, we mention that it is possible to obtain a bound on the number of vertices of \(\Delta(G)\) in terms of the \emph{clique number} \(\omega(G)\) (i.e., the maximum size of a set of vertices inducing a complete subgraph) of \(\Delta(G)\). In particular it is shown in \cite{ACDPS} that, for any finite group \(G\), the size of the vertex set of \(\Delta(G)\) is bounded above by the largest number among \(2\omega(G)+1\) and \(3\omega(G)-4\), thus answering a question posed by the first author and H.P. Tong-Viet in \cite{ATV}.

A key tool for our proof of Theorem A is the analysis of certain linear actions of groups on finite modules, that is carried out in Section~3 (and introduced in Section~2). Another crucial step is the reduction obtained in Section~4 by means of a general lemma, that turns out to be very useful when handling statements about the degree graph by induction on the order of the group. This leads to the study of two special cases, treated in Section~5, and the proof of Theorem~A is then finished in Section~6. 

We conclude this introductory section by mentioning that the following discussion involves the classification of finite simple groups.

\section{Preliminaries}

Throughout the paper, every group is assumed to be a finite group. As customary, given a positive integer $n$, $\pi (n)$ denotes the set of all prime divisors of $n$, and, if $G$ is a group, we write $\pi (G)$ for  $\pi (|G|)$. For a given group \(G\), we denote by \(\Delta(G)\) the character degree graph as defined in the Introduction, and write $\overline{\Delta}(G)$ for the complement of \(\Delta(G)\); the set of vertices of $\Delta (G)$ is denoted by $\V G$, whereas we denote by $\E G$  and ${\overline{\rm{E}}}(G)$ the set of edges of $\Delta(G)$ and of $\overline{\Delta}(G)$, respectively. 

We start by recalling some properties of group actions on finite modules, that will be very relevant in our discussion.   
Let \(H\) and \(V\) be finite groups, and assume that \(H\) acts by automorphisms on \(V\). Given a prime number  \(q\), we say that the pair \((H,V)\) satisfies \(\nq\) if  $q$ divides $|H: \cent HV|$ and, for every non-trivial \(v\in V\), there exists a Sylow \(q\)-subgroup \(Q\) of \(H\) such that \(Q\trianglelefteq \cent H v\). 
We refer to~\cite{C} for a thorough analysis of this and related module actions.

If $(H, V)$ satisfies $\nq$ then, as recalled in the following lemma, \(V\) turns out to be an elementary abelian \(r\)-group for a suitable prime \(r\), and \(V\) is in fact an irreducible module for \(H\) over the field with \(r\) elements ${\rm GF}(r)$. 

\begin{lemma}
  \label{Nq1}
Let $(H,V) \in \nq$. Then $V$ is an elementary abelian $r$-group for a suitable prime $r$, and it  is an irreducible  $H$-module.
If $q \neq 2$, then $V$ is a primitive $H$-module.  
\end{lemma}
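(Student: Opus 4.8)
The plan is to unwind the hypothesis $(H,V)\in\nq$ into an orbit-theoretic statement, extract a covering with trivial-intersection structure, and then run a single induction on $|V|$ that yields both elementary abelianness and irreducibility; primitivity for odd $q$ is then a separate block argument. Concretely, I would first record the key consequence of the local part of $\nq$: for every $1\ne v\in V$, a group $Q\in\syl{q}{H}$ with $Q\nor\cent{H}{v}$ is automatically a full Sylow $q$-subgroup of $\cent{H}{v}$, and being normal it is the \emph{unique} one; hence $|\cent{H}{v}|_q=|H|_q$ and $q\nmid|H:\cent{H}{v}|$. Thus every non-trivial $H$-orbit on $V$ has length prime to $q$. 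Fixing one $Q\in\syl{q}{H}$ and applying the fixed-point congruence to each orbit shows that every orbit meets $W:=\cent{V}{Q}$, so the $H$-conjugates of $W$ cover $V$. These conjugates have pairwise trivial intersection: if $1\ne w\in W$ and $w^h\in W$, then $\cent{H}{w^h}=\cent{H}{w}^h$ has both $Q$ and $Q^h$ as its unique normal Sylow $q$-subgroup, whence $Q=Q^h$ and $h\in N:=\norm{H}{Q}$. Finally $Q$ is \emph{not} normal in $H$: otherwise $W$ would be $H$-invariant and the covering would give $V=W$, i.e. $Q\le\cent{H}{V}$, contradicting $q\mid|H:\cent{H}{V}|$.

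To prove $V$ is elementary abelian and $H$-irreducible I would induct on $|V|$. Let $L\le V$ be a minimal normal subgroup of $V\rtimes H$. The pair $(H,L)$ again lies in $\nq$: the local condition is inherited, and if it failed globally then $Q$ would centralise $L$, forcing $L\le W$; since $L$ is $H$-invariant it would then lie in \emph{every} conjugate $W^h$, and as these do not all coincide (else $V=W$, as above) trivial intersection would give $L=1$. Now $L\cong S^k$ is characteristically simple, and the decisive step is to rule out $S$ non-abelian: one restricts to the stabiliser of a single factor $S_1$—noting that the centraliser of any non-trivial element of $S_1$ must normalise $S_1$, so the $\nq$-configuration is inherited by the action on $S_1$—and derives a contradiction from the fact that a non-abelian simple group cannot carry the covering-by-conjugate-fixed-point-subgroups produced above. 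Hence $L$ is an elementary abelian $r$-group, and it remains to show $L=V$, so that $V$ is itself characteristically simple and therefore elementary abelian and irreducible. This last point, together with the exclusion of non-abelian composition factors, is \textbf{the main obstacle}: both are established through the rigidity of the trivial-intersection covering $V=\bigcup_h W^h$, and the non-abelian exclusion is precisely the step where input on the subgroup structure of simple groups is needed.

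For the final assertion, assume $q\ne2$ and suppose $V$ is imprimitive, say $V=V_1\oplus\cdots\oplus V_t$ with $H$ permuting the blocks transitively and $t\ge2$. Taking $0\ne v\in V_1$ gives $\cent{H}{v}\le\mathrm{Stab}_H(V_1)$, so from $q\nmid|H:\cent{H}{v}|$ we get $q\nmid t$; hence a Sylow $q$-subgroup fixes some block and a non-zero vector in it. I would then build a suitable ``diagonal'' vector $v$ whose centraliser surjects onto a subgroup of the symmetric group on the blocks containing an element that conjugates the block-fixing Sylow $q$-subgroup to a different one, so that no full Sylow $q$-subgroup of $H$ can be normal in $\cent{H}{v}$, contradicting $\nq$. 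The hypothesis $q\ne2$ is exactly what guarantees the existence of such a conjugating element; for $q=2$ the construction breaks down, in accordance with the genuine imprimitive examples coming from $\SL{2^a}$ (cf.\ Proposition~\ref{PSL2}).
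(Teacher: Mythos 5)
Your first block — every nontrivial $H$-orbit has $q'$-length, hence every orbit meets $W=\cent VQ$, the $H$-conjugates of $W$ cover $V$ and intersect pairwise trivially, and $Q\not\nor H$ — is correct, and it is indeed the natural foundation (it is essentially how one must start the result the paper quotes from Zhang). The reduction to a minimal normal subgroup $L$ of $V\rtimes H$ with $(H,L)\in\nq$ is also sound. But from there the first assertion of the lemma is not actually proved: you explicitly label the two decisive steps — excluding $L\cong S^k$ with $S$ non-abelian, and showing $L=V$ — as ``the main obstacle'' and then resolve neither, asserting only that they follow from the ``rigidity'' of the covering. Worse, the fact you invoke for the non-abelian exclusion is false as stated: a non-abelian simple group \emph{can} carry a covering by pairwise trivially-intersecting conjugate proper subgroups. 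By the Baer--Kegel--Suzuki classification of partitioned groups, $\PSL{p^n}$ and ${\rm Sz}(2^{2n+1})$ admit exactly such partitions (Sylow $p$-subgroups together with maximal tori), so ruling out this case requires genuinely finer use of the $\nq$ structure, which you do not supply. The paper itself does not reprove this part — it cites Lemma~4 of \cite{Z} — so a blind proof must either cite that result or do the real work; yours does neither.

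The primitivity argument for $q\neq 2$ has the same character: the opening observations are right ($\cent Hv\leq \mathrm{Stab}_H(V_1)$ for $0\neq v\in V_1$, hence $q\nmid t$ and a full Sylow $q$-subgroup stabilizes a block), but the heart of the proof — constructing the ``diagonal'' vector and the element of its centralizer that moves the Sylow $q$-subgroup — is announced (``I would then build\dots'') rather than carried out, and no argument is given for why $q\neq 2$ produces such an element. Compare the paper's proof, which is short and complete: for $0\neq w\in W_1$ with $Q\nor\cent Hw$ and $v=w+w^{x_i}$, the normal Sylow $q$-subgroup $Q_i$ of $\cent Hv$ stabilizes the set $\{W_1,W_i\}$, hence (as $q\neq 2$, a $q$-group cannot act nontrivially on a $2$-element set) stabilizes each block, hence fixes $w$; uniqueness of the normal Sylow $q$-subgroup of $\cent Hw$ forces $Q_i=Q$, so $Q$ stabilizes every block and lies in the kernel $K$ of the block action; then $(K,V)\in\nq$ while the $W_i$ are proper $K$-invariant subgroups, contradicting the irreducibility statement of the first part applied to $K$. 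Note that this closing step leans precisely on the first assertion (for the normal subgroup $K$) — the part your proposal leaves unproven — so the two gaps compound rather than being independent.
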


\begin{proof}
The first  assertion follows from Lemma~4 of~\cite{Z}. 
In the same Lemma, $V$ is also proved to be primitive under the additional 
assumption that $\Oh{q'}G = G$; we remove this assumption, still using essentially the same argument.

Assume $q \neq 2$. In order to prove that $V$ is primitive as an $H$-module, we can assume 
(by factoring out $\cent HV$) that $\cent HV = 1$. 
Working by contradiction, let $V = W_1 + W_2 + \cdots + W_m$ be an 
imprimitive decomposition of $V$, with $m \geq 2$, and let $K = \cap_{i = 1}^m \norm H{W_i}$
be the kernel of the transitive action of $H$ on the set $\{ W_1, W_2, \ldots, W_m \}$. 

Let $0 \neq w \in W_1$ and let $Q$ be  the unique  Sylow $q$-subgroup of $H$ such that $Q \leq \cent Hw$.   
For $1 \leq i \leq m$,  let $x_i \in H$ be such that $W_1^{x_i} =  W_i$. 
Let $v = w + w^{x_i}$ and  let $Q_i$ be the Sylow $q$-subgroup of $\cent Hv$. 
Thus $Q_i$ stabilizes the set $\{W_1, W_i\}$ and hence, as $q \neq 2$,  $Q_i$ 
stabilizes both $W_1$ and $W_i$. It follows that $Q_i \leq \cent Hw$ and hence $Q_i = Q$. 
Therefore $Q \leq K$ and  hence $(K, V) \in \nq$; but 
$V$ is not irreducible as $K$-module, since $m \geq 2$, a contradiction. 
\end{proof}

Let \(r\) be a prime number and \(n\) a positive integer; we denote  by $\Gamma(r^n)$ the semilinear group on  the field $\GF{r^n}$, and by $\Gamma_0(r^n)$ the subgroup of $\Gamma (r^n)$ induced by the field multiplications. 
Given a group $G$ and a faithful $G$-module $V$
over ${\rm GF}(r)$, $r$ a prime, we say that \emph{$G$ is a
semilinear group on $V$}, and write $G \leq \Gamma(V)$,  if there
exists an injective homomorphism $\phi: G \rightarrow \Gamma(r^n)$,
where $r^n = |V|$, such that the additive group $\GF{r^n}^+$ of
$\GF{r^n}$ (with the $G$-module structure  carried by $\phi$) and
\(V\) are isomorphic $G$-modules. 

\begin{theorem}
  \label{coprimeNq}
  Let $(G, V)$ satisfy $\nq$ and assume that either
\begin{enumeratei}
\item $(|V|, |G|) = 1$;  or
  \item $G$ has a normal $q$-complement; or
  \item $G$ is solvable and $q \neq 3$.
\end{enumeratei} 
Then $G/\cent GV$ is a 
subgroup of $\Gamma(V)$. 
\end{theorem}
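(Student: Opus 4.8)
The plan is to reduce to a faithful, irreducible action and then manufacture a field structure on $V$ from the strong covering property that $\nq$ forces on the fixed spaces of Sylow $q$-subgroups. First I would factor out $\cent GV$, so that we may assume $\cent GV=1$ and, by Lemma~\ref{Nq1}, that $V$ is a faithful irreducible $G$-module over $\GF r$ (and, when $q\neq 2$, primitive). Since $\Gamma(V)$ is metabelian, it suffices to produce a normal abelian subgroup $A$ of $G$ acting irreducibly on $V$: then $E:={\rm End}_A(V)$ is a field containing $\GF r$, $A\le E^{\times}$, $V$ is $1$-dimensional over $E$, and $G$ (which normalizes $A$, hence $E$) acts $E$-semilinearly, giving $G\le\Gamma(V)$.

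The core of the argument is a structural consequence of $\nq$ that I would isolate at the outset and that needs no hypothesis beyond $\nq$. Fix $Q\in\syl qG$ and set $W=\cent VQ$; since $q$ divides $|G|$ and $\cent GV=1$ we have $W\neq 0$ and $W\neq V$. By the defining property of $\nq$, every nonzero $v\in V$ is centralized by some Sylow $q$-subgroup, which is in fact the unique normal Sylow $q$-subgroup of $\cent Gv$. Uniqueness then yields three facts: for $0\neq v\in W$ one has $Q\nor\cent Gv$, hence $\cent Gv\le\norm GQ$; the setwise stabilizer $\norm GW$ equals $\norm GQ$; and two distinct $G$-conjugates of $W$ meet only in $0$. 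Since the conjugates $W^g=\cent V{Q^g}$ cover $V$, the family $\{\,W^g\,\}$ is a $G$-transitive spread of $V$, with part-stabilizers $\norm G{Q^g}$ and with the stabilizer of any nonzero vector contained in the stabilizer of its part.

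It then remains to show that this spread is Desarguesian and that $G$ respects the associated field, equivalently to rule out the non-semilinear configurations permitted by the classification of irreducible linear groups; this is where the three hypotheses and the shape of $\fitg G=\fit G\lay G$ enter. Under (iii), after the customary reduction to a primitive action, I would invoke Huppert's description of solvable primitive linear groups: $G$ is either semilinear or has a normal subgroup $E$ of symplectic type, and the transitive-spread structure together with $q\neq 3$ excludes the latter, after which $\fit G$ supplies a cyclic normal subgroup acting irreducibly and the first paragraph applies. Under (i) or (ii) the group need not be solvable, so after reducing via $\fitg G$ I would appeal to the classification of finite simple groups to force $\lay G=1$: one runs through the quasisimple linear groups family by family and checks that none carries an $\nq$-action, that is, that in none of them do all the relevant vector-stabilizers possess a normal Sylow $q$-subgroup. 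With $\lay G=1$ and the symplectic-type case excluded, $\fit G=\fitg G$ provides the required normal abelian irreducible subgroup.

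The main obstacle is precisely this last, case-dependent exclusion of the exceptional configurations. In the solvable case the delicate point is the symplectic-type normal subgroup $E$, where one must control the interaction between the prime defining $E$ and the symplectic group through which $G/E\zent E$ acts; this is exactly where the restriction $q\neq 3$ is needed. In the non-solvable cases (i) and (ii) the genuine difficulty is the CFSG-based verification that no quasisimple linear group satisfies $\nq$: this forces one to go through the possible components and their faithful modules and to test the normal-Sylow-$q$ condition on vector-stabilizers in each, and it is here that the bulk of the work lies.
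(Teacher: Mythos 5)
Your first two paragraphs are sound: the reduction via Lemma~\ref{Nq1} to a faithful irreducible module, the observation that the fixed spaces $\cent V{Q^g}$ of the Sylow $q$-subgroups pairwise intersect trivially and cover $V$ (uniqueness of the normal Sylow $q$-subgroup of $\cent Gv$ does give this), and the fact that a normal abelian subgroup acting irreducibly on $V$ forces $G/\cent GV \le \Gamma(V)$ are all correct and standard. But from that point on your proposal is a plan rather than a proof, and what you defer is the entire mathematical content of the theorem. You never actually exclude the symplectic-type normal subgroup in case (c) --- you give no argument showing where the hypothesis $q \neq 3$ enters, which is a delicate analysis of how a $q$-element can act on an extraspecial-type group and its symplectic quotient --- and you never perform the verification for cases (a) and (b) that no configuration with $\lay G \neq 1$ admits an $\nq$-action ("one runs through the quasisimple linear groups family by family and checks" is a statement of intent, not an argument). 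These exclusions are precisely what is proved in the literature the paper invokes: the paper's own proof of Theorem~\ref{coprimeNq} is a citation, namely (a) is Theorem~2.5 and (b) is Theorem~2.2(a) of \cite{DKP}, while (c) is Corollary~10 of \cite{C}. So your proposal amounts to re-deriving those results, and it stops exactly where they begin.

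There are also two concrete flaws in the skeleton itself. First, hypothesis (c) allows $q=2$, but your "customary reduction to a primitive action" is unavailable there: Lemma~\ref{Nq1} yields primitivity only for $q \neq 2$, so the imprimitive situation would need a separate treatment that you do not supply. Second, your concluding step --- that once $\lay G = 1$ and symplectic-type subgroups are excluded, $\fit G = \fitg G$ "provides the required normal abelian irreducible subgroup" --- is a non sequitur: a normal abelian (even cyclic) subgroup of a primitive linear group is only guaranteed to act \emph{homogeneously}, not irreducibly (the centre of $\GL{2}{r}$ is the standard example), and producing a normal abelian subgroup that genuinely acts irreducibly on $V$, e.g.\ one containing a Singer-type element, is the crux of every theorem of this kind; it requires substantial further use of the $\nq$ condition, not merely the structure of $\fitg G$.
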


\begin{proof}
(a) is  Theorem~2.5  and (b) is Theorem 2.2(a) of \cite{DKP}, while (c) is Corollary~10 of~\cite{C}.  
\end{proof}

Next, we gather some facts concerning the character degree graph of non-solvable groups. The next results are Propositions 2.6, 2.7, (part of)  2.8   and 2.10   of \cite{DKP}, respectively.  

\begin{proposition}\label{PSL2}
Let $S \simeq \PSL{u^{\alpha}}$ or $S \simeq \SL{u^{\alpha}}$, where $u$ is a prime and $\alpha \geq 1$. 
Let $\pi_{+} = \pi(u^{\alpha}+1)$ and $\pi_{-} = \pi(u^{\alpha}-1)$. For a subset $\pi$ of vertices 
of $\Delta(S)$, we denote by $\Delta_{\pi}$ the subgraph of $\Delta = \Delta(S)$ induced
by the subset $\pi$.
\begin{enumeratei}
\item If $u=2$, then $\Delta(S)$ has three connected components, $\{u\}$, $\Delta_{\pi_{+}}$ and 
$\Delta_{\pi_{-}}$, and each of them is a complete graph.  
\item If $u > 2$ and $u^{\alpha} > 5$, then  $\Delta(S)$ has two connected components, 
$\{u\}$ and  $\Delta_{\pi_{+} \cup \pi_{-}}$; also, both  $\Delta_{\pi_{+}}$ and $\Delta_{\pi_{-}}$ are
complete graphs, no vertex in $\pi_{+}\setminus\{2\}$ is adjacent to any vertex in  
$\pi_{-}\setminus\{2\}$, and $2$ is adjacent to all other vertices in $\Delta_{\pi_{+} \cup \pi_{-}}$. 
\end{enumeratei}
\end{proposition}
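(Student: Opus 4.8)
The plan is to read the graph directly off the list of irreducible character degrees of $S$, which is classical. Put $q = u^{\alpha}$. For $q$ even one has $\cd S = \{1, q-1, q, q+1\}$, while for $q$ odd the set $\cd S$ consists of $1, q-1, q, q+1$ together with some of the ``half'' degrees $(q-1)/2$ and $(q+1)/2$ (precisely which of these occur depends on whether $S$ is $\SL q$ or $\PSL q$ and on $q$ modulo $4$). I would quote these degree lists from a standard reference on the character tables of $\SL q$ and $\PSL q$, noting that the generic formulas are valid once $q \geq 4$; in the odd case the hypothesis $u^{\alpha} > 5$ rules out the degenerate groups with $q \in \{3,5\}$ (e.g.\ $\PSL 5 \cong \alt 5$, where the generic picture already fails), and in the even case the non-solvability of $S$ forces $q \geq 4$ as well.

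The first point is that the half degrees play no role in $\Delta(S)$. Indeed, if a prime $p$ divides $(q\pm1)/2$ then $p$ divides $q\pm1$, so no new vertex arises; and if a product $pp'$ divides $(q\pm1)/2$ then it already divides $q\pm1$, so no new edge arises. Since $q-1$ and $q+1$ themselves belong to $\cd S$, the graph $\Delta(S)$ is completely determined by the three degrees $q-1$, $q = u^{\alpha}$ and $q+1$; in particular $\V S = \{u\} \cup \pi_{+} \cup \pi_{-}$.

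The heart of the matter is then elementary number theory. The relevant facts are that $u$ is the only prime dividing $u^{\alpha}$, that $u$ divides neither $q-1$ nor $q+1$, and that $\gcd(q-1, q+1)$ equals $1$ for $q$ even and $2$ for $q$ odd. The first two facts give that no product $up$ with $p \neq u$ divides any of $q-1, q, q+1$, so $u$ is an isolated vertex. Any two distinct primes of $\pi_{+}$ have product dividing $q+1$, hence are adjacent, so $\Delta_{\pi_{+}}$ is complete, and symmetrically $\Delta_{\pi_{-}}$ is complete. Finally, for odd $p \in \pi_{+}$ and odd $p' \in \pi_{-}$ the relation $\gcd(q-1,q+1) \mid 2$ forces $p \neq p'$ and prevents $pp'$ from dividing any single one of $q-1, q, q+1$; thus $p$ and $p'$ are non-adjacent.

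It remains to split into the two cases. For $u = 2$ the numbers $q \pm 1$ are odd, whence $\pi_{+} \cap \pi_{-} = \emptyset$ and $2 \notin \pi_{+} \cup \pi_{-}$; combined with the adjacency analysis this yields the three complete connected components $\{u\}$, $\Delta_{\pi_{+}}$, $\Delta_{\pi_{-}}$ of part (a). For $u$ odd the numbers $q \pm 1$ are even, so $\pi_{+} \cap \pi_{-} = \{2\}$; as $2$ divides both $q-1$ and $q+1$, it is adjacent to every odd prime of $\pi_{+} \cup \pi_{-}$, which merges $\Delta_{\pi_{+}}$ and $\Delta_{\pi_{-}}$ into a single component while leaving the odd primes of $\pi_{+}$ and $\pi_{-}$ mutually non-adjacent, giving part (b). I expect no genuine obstacle beyond correctly importing the degree lists; the only delicate points are verifying that the half degrees do not alter the graph and that the smallness hypotheses exclude the groups where the generic degree formulas break down.
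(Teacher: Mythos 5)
Your proof is correct, and it is essentially the argument behind the paper's own treatment: the paper does not prove Proposition~\ref{PSL2} itself but quotes it from \cite{DKP}, where it is obtained exactly as you do it, by reading $\Delta(S)$ off the classical character degree lists of $\PSL{u^{\alpha}}$ and $\SL{u^{\alpha}}$ (noting that the half degrees $(u^{\alpha}\pm 1)/2$ create no new vertices or edges) and then doing the elementary arithmetic with $u^{\alpha}-1$, $u^{\alpha}$, $u^{\alpha}+1$. One remark worth keeping: as reproduced here the statement formally allows $u^{\alpha}=2$ in part~(a), where the conclusion fails ($\PSL{2}\simeq S_3$ has $\Delta(S)$ consisting of a single vertex), so your tacit restriction to non-solvable $S$, i.e.\ $u^{\alpha}\geq 4$, is not merely convenient but necessary for the statement to be true, and it matches the hypothesis under which the result is stated in the source.
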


\begin{proposition}\label{W}
Let $G$ be an almost-simple group with socle $S \simeq \PSL{u^{\alpha}}$, where \(u\) is a prime. If \(s\) is a prime divisor of $|G/S|$, then \(s\) is adjacent in \(\Delta(G)\) to every prime in \(\pi(u^{2\alpha}-1)\).
\end{proposition}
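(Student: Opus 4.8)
The plan is to realize each required adjacency through Clifford theory applied to the normal subgroup \(S \nor G\). Write \(q = u^{\alpha}\) and recall that \(\out{S} = D \times F\), where \(D\) is the cyclic group of diagonal automorphisms (of order at most \(2\), and trivial when \(u=2\)) and \(F = \gen{\phi}\) is the cyclic group of field automorphisms, of order \(\alpha\), generated by the Frobenius \(\phi\). Hence a prime \(s\) dividing \(|G/S|\) either equals \(2\) and is realized by \(D\) (only possible when \(q\) is odd), or divides \(\alpha\) and is realized inside \(F\). The argument splits along these two possibilities.

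The basic reduction I would use is an elementary consequence of Clifford's theorem: if \(\theta \in \irr S\) satisfies \(t \mid \theta(1)\) and \(s\) divides the length \(|G : I_G(\theta)|\) of the \(G\)-orbit of \(\theta\), then every \(\chi \in \irr G\) lying over \(\theta\) has \(\chi(1)\) a multiple of \(|G : I_G(\theta)|\cdot\theta(1)\); consequently \(st \mid \chi(1)\) whenever \(s \neq t\), and this produces the edge \(\{s,t\} \in \E G\). So it suffices, for the two degree families of \(S\) that detect the primes of \(\pi(u^{2\alpha}-1)\), to exhibit members having \(G\)-orbit of length divisible by \(s\) (the case \(s=t\) being vacuous, as the graph is simple).

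The two families are the principal series characters of degree \(q+1\) and the cuspidal characters of degree \(q-1\); these are parametrized (up to inversion, and up to a bounded \(\gcd(2,q-1)\)-adjustment passing from \(\PGL\) to \(S\)) by the nontrivial characters of the two cyclic maximal tori, of orders essentially \(q-1\) and \(q+1\). A single character of degree \(q+1\) is divisible by every prime in \(\pi(u^{\alpha}+1)\), and one of degree \(q-1\) by every prime in \(\pi(u^{\alpha}-1)\), so together they cover \(\pi(u^{2\alpha}-1)\). The Frobenius \(\phi\) acts on the two parameter groups as \(x \mapsto x^{u}\). When \(s \mid \alpha\) I would run a counting argument: the parameters fixed modulo inversion by the order-\(s\) automorphism \(\phi^{\alpha/s}\) number only \(O(u^{\alpha/s})\) in each torus, whereas the tori have order of magnitude \(u^{\alpha}\); hence for \(q\) beyond a small explicit bound a non-fixed parameter exists in each family, the associated character has \(G\)-orbit of length divisible by \(s\), and the reduction yields \(\{s,t\}\) for every \(t \in \pi(u^{2\alpha}-1)\). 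For the remaining diagonal case \(s=2\) with \(q\) odd and \(u^{\alpha}>5\), no such work is needed: by Proposition~\ref{PSL2}(b) the vertex \(2\) lies in \(\pi(u^{\alpha}+1)\cap\pi(u^{\alpha}-1)\) and is already adjacent, inside \(\Delta(S)\subseteq\Delta(G)\), to every other prime of \(\pi(u^{2\alpha}-1)\), which is exactly the assertion.

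The main obstacle is twofold. First, the counting step must be made rigorous while the action of \(G/S\) on the two families is correctly identified; in particular one must track the diagonal automorphism (which fixes the degree \(q\pm1\) characters but interchanges the two constituents of degree \((q\pm1)/2\)) and verify that the Clifford multiplicity never obstructs the divisibility of \(\chi(1)\) by \(s(q\pm1)\). Second, the finitely many small groups for which the counting bound is inconclusive, together with the exceptional isomorphisms \(\PSL{4}\cong\PSL{5}\cong\alt{5}\) and \(\PSL{9}\cong\alt{6}\), must be settled separately by direct inspection of the known character degrees of the almost-simple groups with these socles.
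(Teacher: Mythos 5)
The paper itself contains no proof of this proposition: it is quoted verbatim (as Proposition~2.7) from \cite{DKP}, so your attempt can only be judged on its own terms. Your route is the natural one and is essentially what underlies the literature: Clifford's theorem reduces each required adjacency to producing a character of degree divisible by $u^{\alpha}+1$ (resp.\ $u^{\alpha}-1$) whose $G$-orbit has length divisible by $s$; the two families are parametrized by torus characters modulo inversion, on which field automorphisms act as $\lambda\mapsto\lambda^{u}$; and since $G/S$ is abelian with its Sylow $s$-subgroup (for the primes you treat by counting) generated by the coset of a field automorphism, ``moved by $\phi^{\alpha/s}$'' does force $s$ to divide $|G:I_G(\theta)|$. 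For $u^{\alpha}$ large the fixed-point count goes through, and invoking Proposition~\ref{PSL2}(b) correctly disposes of $s=2$ when $u$ is odd and $u^{\alpha}>5$.

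The genuine gap is in your endgame: among the ``finitely many small groups\dots to be settled separately by direct inspection'' there is a case where inspection refutes, rather than confirms, the statement as written. Take $u^{\alpha}=4$, so $S=\PSL{4}\simeq A_5$ and $G={\rm P\Gamma L}_2(4)\simeq S_5$, with $s=2$ dividing $|G/S|=2$. Then $5\in\pi(u^{2\alpha}-1)=\pi(15)$, but $\cd{S_5}=\{1,4,5,6\}$ contains no multiple of $10$, so $2$ and $5$ are \emph{not} adjacent in $\Delta(G)$. The failure occurs exactly where your counting degenerates: $\PSL{4}$ has a unique character of degree $q+1=5$, parametrized by the nontrivial characters of a torus of order $3$ modulo inversion, and the Frobenius $\lambda\mapsto\lambda^{2}=\lambda^{-1}$ coincides with inversion there; hence this character is invariant, extends twice with degree $5$, and never induces, while the two degree-$3$ characters do fuse (which is why $\{2,3\}$ is an edge but $\{2,5\}$ is not). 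So no ``direct inspection'' can close this case affirmatively; the proposition is literally false for this choice of $(u,\alpha)$, and any correct treatment must exploit the exceptional isomorphism $\PSL{4}\simeq\PSL{5}$: reading the socle as $\PSL{5}$, the conclusion (adjacency of $2$ to $\pi(24)=\{2,3\}$) does hold, and that is the reading under which the paper actually applies the result. Your proposal mentions the exceptional isomorphisms only as extra verifications, not as the point where the literal statement breaks, so as written the proof cannot be completed; you would need either to exclude $u^{\alpha}=4$ (after which your counting, Proposition~\ref{PSL2}(b), and inspection of the remaining small cases do suffice) or to make the convention on the choice of characteristic explicit.
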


\begin{proposition}\label{2.8}
  Let $G$ be an almost-simple group with socle $S$, and let $q$ and $p$ be distinct non-adjacent primes of $\Delta(G)$.
  If $q$ does not divide $|S|$, then $S$ is a simple group of Lie type in characteristic $p$.
  \end{proposition}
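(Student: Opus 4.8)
The plan is to locate $q$ inside the outer automorphism group of $S$, use the classification to force $S$ to be of Lie type, and then show that a field-automorphism prime is adjacent in $\Delta(G)$ to every vertex except the defining characteristic. Since $q\in\V G$, the prime $q$ divides some degree in $\cd G$ and hence divides $|G|$; as $q\nmid|S|$ we get $q\mid|G:S|$, so $q$ divides $|\out S|$ while $q\nmid|S|$. Running through the classification: if $S$ is alternating or sporadic then $|\out S|$ is a $\{2\}$-number (it is $1$, $2$ or $4$), forcing $q=2$; but $2\mid|S|$ in all these cases, a contradiction. Hence $S$ is a simple group of Lie type, say over $\GF{\ell^f}$ in characteristic $\ell$. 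Writing $\out S=\mathrm{Outdiag}(S)\rtimes(\Phi\times\Gamma)$ with $\Phi$ the cyclic group of field automorphisms of order $f$ and $\Gamma$ the graph automorphisms, I would check that every prime dividing $|\mathrm{Outdiag}(S)|$ divides $|S|$, and that the primes dividing $|\Gamma|$ (namely $2$, or $3$ for triality on $D_4$) also divide $|S|$; since $q\nmid|S|$, this leaves $q\mid f$, and $G$ induces a field automorphism of order $q$ on $S$.

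The heart of the matter is to prove that $q$ is adjacent in $\Delta(G)$ to every vertex $t$ with $t\neq q$ and $t\neq\ell$. Clifford theory supplies the mechanism: if $\theta\in\irr S$ has a $G/S$-orbit of size divisible by $q$, then $q\mid|G:I_G(\theta)|$, and every $\psi\in\irr{G\mid\theta}$ has degree a multiple of $|G:I_G(\theta)|\,\theta(1)$. Thus it suffices, for each such $t$, to produce a character $\theta\in\irr S$ that is moved by the field automorphism of order $q$ and satisfies $t\mid\theta(1)$ when $t\mid|S|$, or lies in a $G/S$-orbit of size divisible by $qt$ when $t\nmid|S|$ (in which case $t\mid f$ as well, by the argument of the previous paragraph). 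The existence of such $\theta$ is where the detailed character theory of finite groups of Lie type is needed: through Deligne--Lusztig theory and Lusztig's Jordan decomposition one controls the irreducible degrees, and, since a field automorphism of order $q$ has fixed-point subgroup the corresponding subfield subgroup, its fixed characters are comparatively few, so for each $t$ a suitable non-fixed character (for instance a regular semisimple character with free orbit, or one of degree divisible by $t$) can be exhibited. I expect this uniform existence statement, together with the finitely many small cases checked by hand, to be the main obstacle and the place where the classification genuinely enters.

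Granting the adjacency claim, the only vertices that $q$ can fail to be adjacent to are $q$ itself and $\ell$. Since by hypothesis $p$ is a vertex distinct from $q$ and non-adjacent to $q$, we conclude $p=\ell$; that is, $S$ is a simple group of Lie type in characteristic $p$, as required. (The statement being proved here is precisely the general-rank form of the $\mathrm{PSL}_2$ adjacency recorded in Proposition~\ref{W}.)
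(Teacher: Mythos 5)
Your opening reduction is sound, and it is the easy part of the statement: since $q\in\V G$ divides $|G|$ but not $|S|$, it divides $|\out S|$; the alternating, sporadic and Tits cases die because their outer automorphism groups are $2$-groups while $2$ always divides $|S|$; and for $S$ of Lie type over $\GF{\ell^f}$ the diagonal and graph parts of $\out S$ involve only primes dividing $|S|$, so $q\mid f$ and $G$ contains field automorphisms of order $q$. The genuine gap is that the proposal then stops exactly where the content of the proposition begins. The whole burden of the statement is the adjacency claim: for every vertex $t\neq q,\ell$ of $\Delta(G)$, some $\theta\in\irr S$ must be produced whose degree and $G$-orbit force $qt$ to divide a degree of $G$. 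Your text does not prove this for a single family of Lie type groups; it says that via Deligne--Lusztig theory and Jordan decomposition such a character ``can be exhibited'' and explicitly flags this as ``the main obstacle''. That is a statement of intent, not an argument: no non-invariant character of degree divisible by $t$ is constructed, no bound on the $\sigma$-fixed characters is established, and the ``finitely many small cases'' are neither identified nor checked. There is also a quieter technical slip in the mechanism itself: to get $q\mid |G:I_G(\theta)|$ it is not enough that $\theta$ is moved by your chosen field automorphism $\sigma$; you need that no Sylow $q$-subgroup of $G$ stabilizes $\theta$, i.e.\ that no $G$-conjugate of $\theta$ is $\sigma$-invariant, a strictly stronger condition (there are in general several classes of order-$q$ elements above $\sigma S$), and the sketch never addresses it.

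For comparison with the paper: the paper does not prove this proposition at all --- it imports it verbatim as (part of) Proposition 2.8 of \cite{DKP}, where the required character-theoretic input for groups of Lie type is carried out, resting ultimately on the computation of the degree graphs of the simple groups in \cite{W}. So what you have reconstructed is the outer, routine layer of that argument (locating $q$ among the field automorphisms), while the core that makes the result a theorem rather than an expectation is left unproven. To repair the proposal you would either have to supply the Lie-type character constructions in full, or do what the paper does and cite \cite{DKP} for precisely the step you labelled the main obstacle.
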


\begin{proposition}
  \label{CS}
  Let $G$ be a finite group, $M$ a non-abelian minimal normal subgroup of $G$ and $C = \cent GM$.
  Then the following conclusions hold.
  \begin{enumeratei}
  \item If $q$ is a prime divisor of $|G/MC|$ and $q$ does not divide $|M|$, then there exists $\theta \in \irr M$ such that $q$ divides $|G:I_G(\theta)|$.
  \item If $q$ is a prime divisor of $|G/C|$, then there exists $\theta \in \irr M$ such that
    $q$ divides $\chi(1)$ for all $\chi \in \irr{G|\theta}$.
  \item If $M$ is not a simple group, then $\Delta(G/C)$ is a complete graph. 
  \end{enumeratei}
\end{proposition}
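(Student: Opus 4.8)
The plan is to treat the three parts in the order (a), (b), (c), using (a) as the engine for the other two. Throughout write $M = S_1 \times \cdots \times S_n$ with the $S_i$ copies of a non-abelian simple group $S$ transitively permuted by $G$, and recall that $M \cap C = 1$, so that $G/C$ embeds in $\aut M \cong \aut S \wr \sym n$ with $MC/C$ corresponding to the base group ${\rm Inn}(M) \cong S^n$.

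For part (a), note first that it suffices to produce $\theta \in \irr M$ not fixed by a fixed Sylow $q$-subgroup $Q$ of $G$: any such $\theta$ has $|Q : Q \cap I_G(\theta)|$ equal to a nontrivial power of $q$ dividing $|G:I_G(\theta)|$. Arguing by contradiction, assume $Q$ fixes $\irr M$ pointwise. By Brauer's permutation lemma the number of irreducible characters and of conjugacy classes of $M$ fixed by each element of $Q$ agree, so $Q$ fixes every conjugacy class of $M$. Passing to $\out M$ and picking an element of order $q$ in the image of $Q$ (this image is nontrivial since $q \mid |G/MC|$ while $q \nmid |M|$), we are reduced to showing that a nontrivial outer automorphism $\alpha$ of $M = S^n$ of order $q$ with $q \nmid |M|$ cannot fix every conjugacy class of $M$. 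If $\alpha$ moves one of the factors $S_i$, it carries a class supported on $S_i$ to one supported on a different factor, so it moves a class; hence $\alpha$ stabilizes each $S_i$ and induces on some factor a nontrivial outer automorphism of order $q$ with $q \nmid |S|$. That such an automorphism must move a conjugacy class (equivalently, by Brauer, an irreducible character) of the simple group $S$ is the key classification-dependent input, and this is \emph{the main obstacle} of the argument: with $q \nmid |S|$ the automorphism is, modulo inner ones, essentially a field automorphism, and the fact that field automorphisms do not fix all irreducible characters is available from the known description of $\irr S$ for simple groups of Lie type.

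Part (b) follows from (a) together with the Ito--Michler theorem. If $q \nmid |M|$, then $q \mid |G/MC|$, so (a) gives $\theta \in \irr M$ with $q \mid |G:I_G(\theta)|$; for every $\chi \in \irr{G|\theta}$ Clifford's theorem yields $\chi(1) = e\,|G:I_G(\theta)|\,\theta(1)$, whence $q \mid \chi(1)$. If instead $q \mid |M|$, then $q \mid |S|$, and since a non-abelian simple group has no abelian normal Sylow subgroup, Ito--Michler provides $\theta_0 \in \irr S$ with $q \mid \theta_0(1)$; taking $\theta = \theta_0 \times 1 \times \cdots \times 1 \in \irr M$ we obtain $q \mid \theta(1) \mid \chi(1)$ for all $\chi \in \irr{G|\theta}$.

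For part (c) we may factor out $C$ and assume $C = 1$, so $G \leq \aut M$ with $n \geq 2$. Given two distinct vertices $p, q$ of $\Delta(G)$, we seek $\chi \in \irr G$ with $pq \mid \chi(1)$. If $p, q \mid |S|$, choose by Ito--Michler characters of $S$ of degrees divisible by $p$ and by $q$ respectively, place them on two distinct factors (using $n \geq 2$) to build $\theta \in \irr M$ with $pq \mid \theta(1)$, and take any $\chi$ over $\theta$. If exactly one of them, say $q$, does not divide $|S|$, we refine the construction in (a): the orbit-size divisibility by $q$ is produced either by a $q$-cycle on the factors or by an outer automorphism of order $q$ fixing the factors, and in either case one may simultaneously load a degree-divisible-by-$p$ character of $S$ on a spare factor, obtaining $\theta$ with $p \mid \theta(1)$ and $q \mid |G:I_G(\theta)|$; then $pq \mid \chi(1)$ for any $\chi$ over $\theta$. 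The remaining case, where neither $p$ nor $q$ divides $|S|$, is the delicate one: here both primes arise from the action on the factors and on the outer automorphisms, and one must exhibit a single $\theta \in \irr M$ whose $G$-orbit has size divisible by $pq$, combining the two mechanisms on the $n \geq 2$ available factors. This final combination, together with the classification input underlying (a), is where the argument requires the most care.
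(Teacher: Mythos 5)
Your proof of part (a) has a genuine gap, and it sits exactly where the real difficulty of the statement lies. The opening reduction is false: producing $\theta\in\irr M$ that is moved by one fixed Sylow $q$-subgroup $Q$ does \emph{not} give $q\mid |G:I_G(\theta)|$, and the asserted divisibility $|Q:Q\cap I_G(\theta)|\,\big|\,|G:I_G(\theta)|$ does not hold in general (orbits of a subgroup need not have sizes dividing the ambient orbit size). What is actually equivalent to $q\mid |G:I_G(\theta)|$ is that $I_G(\theta)$ contains no Sylow $q$-subgroup of $G$, i.e.\ that \emph{no $G$-conjugate of $\theta$} is $Q$-invariant. A concrete counterexample within the hypotheses of (a): take $S={\rm Sz}(8)$ (so $3\nmid |S|$), $M=S^4$, $G=M\rtimes A_4$ permuting the factors, $q=3$, $Q=\langle (1\,2\,3)\rangle$, and $\theta=\psi\times\phi\times\phi\times\phi$ with $\psi\neq\phi$ in $\irr S$; then $Q$ moves $\theta$, yet $|G:I_G(\theta)|=4$. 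Consequently, the hypothesis you negate (``$Q$ fixes $\irr M$ pointwise'') is far stronger than the negation of (a) (``every $G$-orbit on $\irr M$ contains a $Q$-invariant character''), and Brauer's permutation lemma plus the classification input on outer automorphisms refutes only the stronger hypothesis. What your argument really proves is that some $q$-element of $G$ moves some character of $M$, which is strictly weaker than (a). Repairing this is not cosmetic: in the example above, (a) holds only because of orbits of a different shape (e.g.\ tuples with pairwise distinct components), and the genuine proof must manufacture an orbit with \emph{no} $Q$-fixed points, controlling simultaneously the permutation action of $Q$ on the factors (including factors $Q$ normalizes, where it may act by inner automorphisms and hence fix many characters) and the outer action on a stabilized factor.

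For the comparison you asked about: the paper does not prove Proposition~\ref{CS} at all --- it quotes it as Proposition 2.10 of \cite{DKP}, whose proof (resting on \cite{MT}) consists precisely in establishing the no-fixed-point-in-the-whole-orbit property that your reduction bypasses. As for the rest of your proposal: part (b) is correct \emph{conditional on} (a) (the Clifford and Ito--Michler steps are fine, and the case $q\mid |M|$ needs no input from (a)); but part (c) is not a proof. Its mixed case inherits the flawed mechanism of (a), and the case where neither $p$ nor $q$ divides $|S|$ --- which you yourself flag as the delicate one, requiring a single orbit of size divisible by $pq$ --- is left entirely open. So both (a) and (c) would need to be redone along the lines of the cited literature.
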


Finally, we will freely use without references some notation and basic facts of Character Theory  such as  Clifford Correspondence, Gallagher's Theorem, Ito-Michler's Theorem, properties of character extensions and coprime actions (see \cite{Is}).

\section{About condition \(\nq\)}

This section contains a series of results concerning condition \(\nq\), that will be crucial in our proof of Lemma~\ref{specialcase}.

We remark that, if $(H, V) \in \nq$ and \(N\) is a normal subgroup of \(H\) such that \(q\) divides \(|N/\cent N V|\), then \((N,V)\) satisfies \(\nq\) as well.

\begin{lemma}
  \label{gl62}
Let $V$ be a faithful $H$-module, $|V| = 2^6$ and assume that 
$(H, V) \in \nq$. Then $q = 2$ and either $H \leq \Gamma(V)$ or $H$ has a normal subgroup isomorphic to $\SL 8$. 
\end{lemma}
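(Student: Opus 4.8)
The plan is to realise $V$ concretely and then read a partition of $V$ out of the hypothesis $\nq$. Since $|V|=2^6$, Lemma~\ref{Nq1} gives that $V$ is a faithful irreducible $H$-module over $\GF 2$; hence $H\le\GL 6 2$, $\cent H V=1$, and $q$ divides $|H|$. The first point to record is that $Q\trianglelefteq\cent H v$ forces $Q\le\cent H v$, so every nonzero $v$ lies in the fixed space $\cent V Q$ of a \emph{full} Sylow $q$-subgroup $Q$ of $H$; thus the subspaces $\cent V Q$, with $Q\in\syl q H$, cover $V$. They also meet pairwise trivially: if $0\ne v\in\cent V Q\cap\cent V{Q'}$ with $Q,Q'\in\syl q H$, then $Q$ and $Q'$ are both normal Sylow $q$-subgroups of $\cent H v$, so $Q=Q'$. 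As the Sylow $q$-subgroups are conjugate, the $\cent V Q$ share a common dimension $d$, and we obtain the partition
\[
 |\syl q H|\,(2^{d}-1)=2^{6}-1=63 .
\]
Since $\cent H V=1$ and $Q\ne 1$, we have $\cent V Q\ne V$, so $d\le 5$; then $2^{d}-1\mid 63=3^{2}\cdot 7$ forces $d\in\{1,2,3\}$ and $|\syl q H|\in\{63,21,9\}$. Combined with $|\syl q H|\equiv 1\pmod q$, this yields $q\in\{2,5,31\}$; in particular $q\ne 3$.

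Assume first that $H$ is solvable. As $q\ne 3$, Theorem~\ref{coprimeNq}(c) applies and gives $H=H/\cent H V\le\Gamma(V)$, which is the first alternative of the statement. Moreover $q$ divides $|\Gamma(2^6)|=2\cdot 3^{3}\cdot 7$, and since $5$ and $31$ do not divide this number we conclude $q=2$.

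Now assume $H$ is non-solvable, so its layer $\lay H$ is nontrivial and contains a component $L$, a quasisimple subnormal subgroup acting nontrivially on $V$. By Clifford's theorem $V$ restricts to $L$ as a semisimple module, and $L$ is faithful on $V$, so $L$ is a quasisimple group possessing a faithful $\GF 2$-representation of dimension at most $6$ and order dividing $|\GL 6 2|=2^{15}\cdot 3^{4}\cdot 5\cdot 7^{2}\cdot 31$. I would run through the (finitely many) such $L$ provided by the classification of finite simple groups, and test each against $\nq$ --- concretely, against the partition and normal-Sylow conditions above applied to the point stabilisers $\cent L v$. The decisive feature is that $\nq$ demands a full Sylow subgroup to be normal in each point stabiliser; for candidates such as $A_7$, $\mathrm{SL}_4(2)$, $\mathrm{Sp}_6(2)$ or $\mathrm{SL}_3(4)$ the vector stabilisers are (maximal) parabolic-type subgroups whose Sylow subgroups are far from normal, so these are excluded, and the unique survivor is $L\cong\SL 8$ acting on its natural module $\GF 8^{2}=\GF 2^{6}$ --- here $\cent L v$ is itself a Sylow $2$-subgroup and the nine $\GF 8$-lines are exactly the fixed spaces $\cent V Q$. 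Then $L$ is the unique component, hence $L=\lay H\trianglelefteq H$, giving the second alternative. Finally, from $\SL 8\trianglelefteq H$, $\zent L=1$ and $\cent{\GL 6 2}{L}\cong\GF 8^{\times}$ one gets that $|H|$ divides $|L|\cdot 7\cdot|\out L|$, a $\{2,3,7\}$-number; intersecting with $q\in\{2,5,31\}$ forces $q=2$.

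The main obstacle is this last case: producing the full list of quasisimple subgroups of $\GL 6 2$ and verifying that $\nq$ --- through the normal-Sylow-in-point-stabiliser condition --- eliminates every one of them except $\SL 8$. This step rests on the classification of finite simple groups and on the knowledge of the small-dimensional modular representations and of the relevant maximal subgroups, and the delicate part is checking the normality condition case by case.
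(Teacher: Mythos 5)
Your first half is sound, and it is essentially the paper's own opening: the normality clause in $\nq$ makes the Sylow $q$-subgroup fixing a given nonzero vector unique, so the spaces $\cent{V}{Q}$, $Q\in\syl{q}{H}$, partition $V\setminus\{0\}$, giving $|\syl{q}{H}|(2^d-1)=63$, hence $|\syl{q}{H}|\in\{9,21,63\}$ and $q\in\{2,5,31\}$; and the solvable case is correctly settled by Theorem~\ref{coprimeNq}(c). The genuine gap is the first sentence of your non-solvable case: ``$H$ is non-solvable, so its layer $\lay{H}$ is nontrivial.'' This implication is false. A non-solvable group may have no components at all, i.e.\ $\fitg{H}=\fit{H}$: for instance $H=2^4{:}A_5$, with $A_5\simeq\SL{4}$ acting naturally on $2^4$, is non-solvable, has $\lay{H}=1$, and embeds in $\GL{6}{2}$ via ${\rm AGL}_4(2)\leq{\rm GL}_5(2)$. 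Nothing you have proved up to that point excludes this shape for $H$ (ruling it out is essentially the content of the lemma), so the reduction to a quasisimple subgroup $L$ acting on $V$ --- on which your entire classification scan rests --- is not a simplification but an unproved claim, and the rest of the argument collapses with it. Note also that even when a component $L$ exists, $\nq$ is only known to pass to \emph{normal} subgroups whose order is divisible by $q$ (the remark opening Section~3); a component is merely subnormal, and for $q\in\{5,31\}$ the prime $q$ need not divide $|L|$ at all, so testing $\nq$ on the stabilizers $\cent{L}{v}$ is not automatically legitimate.

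The paper avoids exactly this trap by arguing with quotients instead of components. For $q=2$ it first reduces to the case where every proper normal subgroup of $H$ has odd order (using that a normal ${\rm L}_2(8)$ in a subgroup of ${\rm L}_6(2)$ is characteristic), so that $K=\oh{2'}{H}$ is the unique maximal normal subgroup and $S=H/K$ is non-abelian simple and $H$ is perfect; it then pins $S$ down among the simple sections of ${\rm L}_6(2)$ by the purely numerical criterion that $|\syl{2}{S}|$ must divide $|\syl{2}{H}|\in\{9,21,63\}$ (leaving only ${\rm L}_2(7)$ and ${\rm L}_2(8)$), kills $K$ by a Frattini argument together with the solvability of ${\rm Aut}_H(P)$ for the relevant Sylow subgroups $P\leq K$, and finally eliminates ${\rm L}_2(7)$ because its non-cyclic subgroups of order $21$ cannot act fixed-point-freely on $V$; a parallel quotient argument disposes of $q\in\{5,31\}$. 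By contrast, your decisive step --- enumerating the quasisimple subgroups of $\GL{6}{2}$ and verifying that the point-stabilizer condition fails for every one except $\SL{8}$ --- is exactly the hard part and is left unexecuted (``I would run through\ldots''), with only a heuristic remark about parabolic-type stabilizers. If you do want to salvage the component route, you would first have to prove that a non-solvable $H$ with $(H,V)\in\nq$ has a component (for example by showing, as the paper in effect does, that $\fit{H}$ cannot support a non-solvable group of automorphisms inside $\GL{6}{2}$), then use Lemma~\ref{Nq1} to force $V$ to be irreducible as an $L$-module, which is what makes the list finite and gives the uniqueness of $L$ that you asserted without proof.
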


We observe that $\rm{GL}_6(2)$ has a unique conjugacy class of maximal 
subgroups (isomorphic to ${\rm GL}_2(8) : C_3 = (C_7 \times {\rm L}_2(8)):C_3$) 
having a section isomorphic to ${\rm L}_2(8)$ and that if $H$ is a subgroup of $\rm{GL}_6(2)$ having a
normal subgroup isomorphic to $\SL 8$, then $(H, V) \in \nq$, where $V$ is the natural module of $\rm{GL}_6(2)$. 

\begin{proof}[Proof of Lemma~\(\ref{gl62}\)]
As $\cent HV = 1$, we can identify  $H$ with  
a  subgroup of $ G := {\rm GL}_6(2)$ and $V$ with the natural module.
We recall that $|G| = 2^{15} \cdot 3^4 \cdot 5 \cdot 7^2 \cdot 31$.

We first show that $q = 2$. Working by  contradiction, let $H$ be a subgroup of ${\rm L}_6(2)$ of minimal order  such that $(H, V) \in \nq$ for some  $q \neq 2$.
By the remark at the beginning of the page, $H$ has no proper normal subgroup of order divisible by~$q$. 

Let $Q$ be 
a Sylow $q$-subgroup of $H$. 
Considering that $n_q = |H: \norm HQ| = (2^6 -1)/(2^b -1)$ where $2^b = |\cent VQ|$, we see  (as $q\neq 2$)  that the 
only two possibilities are  $(q, n_q) = (5, 21)$ or $(q, n_q) = (31, 63)$. 
In the second case, $N = \norm HQ$ acts faithfully on $[V, Q]$ (as $V = \cent VQ \times [V, Q]$ and 
$N$ acts trivially on $\cent VQ$, as $|\cent VQ| = 2$). But $[V, Q]$ is an irreducible $Q$-module
(as  $31$ is a primitive divisor
of $2^5 -1$) and hence $N \leq \Gamma(2^5)$ by~\cite[Theorem 2.1]{MW}. 
Therefore, $|H| = 63\cdot |N|$ is odd and hence $H$ 
is solvable. Thus (by \cite[Corollary 10]{C}) $H \leq \Gamma(2^6)$, yielding that $q$ divides $6$, a contradiction. 

Assume now $q = 5$ and $[H: \norm HQ] = 21$. So, $Q$ has order $5$ and it is also a
Sylow $5$-subgroup of ${\rm GL}_6(2)$.  Observe that $\norm{{\rm GL}_6(2)}{Q} \simeq \Gamma(2^4) \times S_3$. 
Hence $|H|$ divides $2^3 \cdot 3^3 \cdot 5 \cdot 7$ and  $|H|$ is certainly a multiple of $3 \cdot 5 \cdot 7$. 
Let $K$ be a maximal normal subgroup of $H$; recall that $|K|$ is coprime to $5$.
If $H/K$ is solvable, then $H$ is $5$-nilpotent and by (b) of Theorem~\ref{coprimeNq} again $H \leq  \Gamma(2^6)$, 
a contradiction. 
So $H/K = S$ is a non-abelian simple group and $5$ divides $|S|$. Moreover, $7$ divides $|S|$, as otherwise
by the Frattini Argument $Q$ normalizes some Sylow $7$-subgroup $P$  of $H$ and hence 
$Q$ centralizes $P$ (because $|P|$ divides $7^2$), again a contradiction as 
$7$ divides $|H:\norm HQ|$.  

The non-abelian simple sections (i.e. factor groups  of subgroups)  of 
${\rm L}_6(2)$ are isomorphic to one of the groups in  the following list:

$$ \mathcal{L} =  \{{\rm A}_5, {\rm A}_6, {\rm A}_7, {\rm A}_8
, {\rm L}_2(7) 
, {\rm L}_2(8), {\rm U}_3(3) , {\rm U}_4(2), {\rm L}_3(4), {\rm L}_5(2), {\rm S}_6(2), {\rm L}_6(2) \} \; .$$

The only  group in $\mathcal{L}$ whose  order is at most  $2^3 \cdot 3^3 \cdot 5 \cdot 7$
and  is   divisible by both $5$ and $7$ is ${\rm A}_7$. But the normalizer in ${\rm A}_7$ of a Sylow 
$5$-subgroup has index $2 \cdot 3^2 \cdot 7$, and this is not possible as the same index  
divides $|H: \norm HQ| = 21$. 

\medskip

Assume now $q= 2$ and, as before, $H \leq G =  {\rm L}_6(2)$. 
We want to show either that  $H \leq \Gamma(2^6)$ or 
that $H$ has a normal subgroup isomorphic to ${\rm L}_2(8)$.

As above, (using (c) of Theorem~\ref{coprimeNq}) we can assume  that $H$ is non-solvable.
We can also assume that no proper normal subgroup of $H$ has
even order. In fact, by inspection of the maximal subgroups of ${\rm L}_6(2)$ one 
checks that if $N$ is a subgroup of ${\rm L}_6(2)$ and 
$N$ has a normal subgroup $L \simeq {\rm L}_2(8)$, then $L$ is characteristic in $N$ (as $L$ is the component
subgroup of $N$).

Write $K = \oh{2'}H$; by the preceding paragraph we see that
 $H/K = S$ is non-abelian simple and that $H$ is perfect. 
Let $Q$ be a Sylow $2$-subgroup of $H$. 
Observe that $n_2(S) = |\syl 2S|$ divides  $n_2(H) = |H: \norm HQ| = (2^6 -1)/(2^b -1) \in \{9, 21, 63 \}$. But for  $S$ in the above list of non-abelian simple sections of ${\rm L}_6(2)$, we check that the 
only groups $S$ that satisfy the  condition 
 $|\syl 2S|$  divides one among $\{9, 21, 63 \}$
are ${\rm L}_2(7)$ and ${\rm L}_2(8)$. 

We now proceed to show that $K$ is trivial. 
Let $P$ be a Sylow $p$-subgroup of $K$, for a prime $p \neq 2$. Then by 
Frattini Argument, $H = K \norm HP$ and hence $\norm HP/\norm KP \simeq H/K = S$. 
If the group ${\rm{Aut}}_H(P) = \norm HP/\cent HP$ is solvable, then 
$\cent HP \not\leq \norm KP$ and, being $\norm HP/\norm KP$ simple, it follows
that $\cent HP \norm KP = \norm HP$ and hence that $\cent HP$ has a factor 
group isomorphic to $S$ (in fact, $\cent HP/\cent KP \simeq \norm HP/\norm KP$).
So $\cent HP$ is a normal subgroup of even order of $H$ and hence $P$ is central in $H$. 
  Since the Schur Multiplier of $S$ has order $1$ or $2$ and $H$ is perfect, we conclude that $P =1$.

  Now, ${\rm{Aut}}_H(P)$ is certainly solvable if $ p \in \{5, 7, 31\}$ (observe that $7$ divides $|S|$).
  Moreover, a Sylow $3$-subgroup $Q$ of $G$ is isomorphic to the wreath product $C_3 \wr C_3$ and for
  all $P \leq Q$ one checks that ${\rm Aut}(P)$ is a $\{2,3\}$-group, except when $P$ is elementary abelian
  of order $3^3$; but as $13$ does not divide $|G|$, again ${\rm Aut}_H(P)$ is a $\{2,3\}$-group.
  Thus, we conclude that $K = 1$. 

Finally, if $H \simeq {\rm L}_2(7)$ and $Q \in \syl 2H$, then $\norm HQ = Q$ and
as $(H, V) \in \mathcal{N}_2$, we have that every subgroup of odd order of $H$ 
acts fixed point freely on $V$. But ${\rm L}_2(7)$ has  non-cyclic subgroups of order 
$21$, a contradiction (subgroups of order $pq$ of a Frobenius complement are cyclic). 
Thus, we conclude that $H \simeq {\rm L}_2(8)$.
\end{proof}

\begin{proposition}
\label{Nq2}
If $(H, V) \in \nq$, $\cent HV = 1$ and $q$ does not divide $|V|$, then the following conclusions hold.
\begin{enumeratei}
\item $\fit H$ is cyclic. 
\item If \(B\) is the solvable radical of \(H\) and \(H\neq B\) (i.e., if $H$ is non-solvable), then \(B=\fit H\). 
\end{enumeratei}
\end{proposition}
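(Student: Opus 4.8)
The plan is to first unwind the hypotheses via Lemma~\ref{Nq1}: $V$ is a faithful irreducible $H$-module over $\GF r$ for some prime $r\neq q$ (as $q\nmid|V|$), and $V$ is primitive when $q\neq 2$. Since a nontrivial normal $r$-subgroup would have nonzero fixed points forming a proper nonzero $H$-submodule, we get $\oh r H=1$, so $F:=\fit H$ is an $r'$-group acting faithfully and coprimely on $V$. The key is to read condition $\nq$ in its strong form: for every $0\neq v\in V$ the centralizer $\cent H v$ contains a \emph{full} Sylow $q$-subgroup $Q_v$ of $H$ as its unique normal Sylow $q$-subgroup. From this I would extract two facts. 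First, if $0\neq v\in\cent V Q\cap\cent V{Q'}$ for $Q,Q'\in\syl q H$, then $Q=Q'=Q_v$; hence the nonzero subspaces $\cent V Q$ (for $Q\in\syl q H$) pairwise intersect trivially and, since every nonzero vector lies in some $\cent V{Q_v}$, they form a partition (spread) of $V$ that $H$ permutes transitively with point stabilizer $\norm H Q$. Second, $\oh q F\le Q$ fixes $\cent V Q\neq 0$ pointwise, whereas a nontrivial normal $q$-subgroup must act fixed-point-freely (its fixed space is an $H$-submodule); thus $\oh q F=1$ and $F$ is a nilpotent $\{q,r\}'$-group.

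For part~(a) it suffices to prove that each Sylow subgroup $P=\oh p F$ (with $p\neq q,r$) is cyclic. When $q\neq 2$, primitivity forces $V|_F$, and hence $V|_P$, to be homogeneous; a faithful homogeneous action of a nilpotent group has cyclic center, and an \emph{abelian} group acting faithfully and homogeneously embeds into the endomorphism algebra of a simple constituent, which by Schur's lemma is a finite field, and is therefore cyclic. So in this case the whole of (a) reduces to proving that $F$ is abelian. I would argue by contradiction: if $P$ were non-abelian it would be of symplectic type, with cyclic center $\zent P$ and noncommuting elements $x,y$ of order $p$ whose commutator $z=[x,y]\in\zent P\setminus\{1\}$ acts as a nontrivial scalar on the homogeneous module, hence fixed-point-freely. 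Choosing $0\neq v\in\cent V x$ and writing $v\in\cent V{Q_v}$, we get $x\in\cent H v\le\norm H{Q_v}$, and since $[x,Q_v]\le P\cap Q_v=1$ (using $P\nor H$ to put $[x,Q_v]\le P$ and $x\in\norm H{Q_v}$ to put $[x,Q_v]\le Q_v$), the element $x$ in fact \emph{centralizes} $Q_v$. Tracking this relation for $x$, $y$ and their conjugates, and playing it against the fact that $z$ fixes no nonzero vector, should yield the required contradiction.

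The cases $q=2$ and part~(b) need separate endgames. For $q=2$, where primitivity may fail, I would exploit the spread directly: letting $R\nor H$ be the kernel of the action of $H$ on the spread, the subgroup $F\cap R$ centralizes every Sylow $q$-subgroup, because $[Q_i,F\cap R]\le Q_i\cap(F\cap R)=1$, and it is fixed-point-free if nontrivial; combined with Theorem~\ref{coprimeNq}(b), applied to normal subgroups admitting a normal $q$-complement, this pins down the structure of $F$. For part~(b), note that $\fit H\le B$ always and $\fit B=\fit H$; as $B$ is solvable, $\cent B{\fit B}\le\fit B=\fit H$, so $B/\fit H$ embeds into $\aut{\fit H}$, which is abelian because $\fit H$ is cyclic by~(a). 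Thus $B/\fit H$ is abelian, and it remains to force $B=\fit H$. Here I would use non-solvability: the layer $\lay H\neq 1$ satisfies $[B,\lay H]\le B\cap\lay H\le\zent{\lay H}$, so $B$ centralizes $\lay H$ modulo its center; feeding this back into the $\nq$/spread analysis (and, where applicable, Theorem~\ref{coprimeNq}) should show that any part of $B$ lying outside $\fit H$ would violate the fixed-point structure, giving $B=\fit H$.

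The main obstacle is the passage from the combinatorial $\nq$-data to the algebraic conclusion: turning the partition of $V$ by the fixed spaces of Sylow $q$-subgroups, together with the coprime commutator identity $[x,Q_v]=1$, into a genuine contradiction with the fixed-point-free action of $\zent{\oh p F}$ (the abelianness step of~(a)), and, for (b), ruling out a solvable radical strictly larger than the Fitting subgroup. Both hinge on careful bookkeeping of how $F$ and $B$ permute the spread, and the absence of primitivity when $q=2$ is precisely what makes a fully uniform argument delicate.
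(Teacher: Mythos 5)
Your reductions are sound and largely match the paper's own starting point: irreducibility and (for $q\neq 2$) primitivity from Lemma~\ref{Nq1}, hence homogeneity of $V$ restricted to any normal subgroup; $\oh rH=1$ and $\oh qF=1$; the ``spread'' of fixed spaces $\cent VQ$, $Q\in\syl qH$, together with the identity $[x,Q_v]=1$ for $x\in\cent Hv$ lying in a normal $p$-subgroup; the reduction of (a) to abelianness of the Sylow subgroups of $F$ (characteristic abelian subgroups are cyclic, so a non-abelian Sylow subgroup is of symplectic type); and, in (b), that $B/\fit H$ is abelian and $[B,\lay H]=1$. But at all three critical junctures the proposal stops exactly where the real work begins. (i) For (a), ``tracking this relation for $x$, $y$ and their conjugates \dots should yield the required contradiction'' is not a proof, and no easy contradiction follows from the stated relations alone: that every non-central element of $P$ with nonzero fixed space centralizes a full Sylow $q$-subgroup is, on its face, compatible with a fixed-point-free central element $z$. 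The paper's proof of precisely this step is the heaviest part of the argument: it is an induction on $|H|$; it first excludes $a\leq 2$ and $|V|=2^6$ (the latter via Lemma~\ref{gl62}, the former via the results of \cite{DKP} that also kill $q=2$) in order to obtain a primitive prime divisor $t$ of $r^a-1$ and a subgroup $T$ of order $t$, whose centralizer in ${\rm GL}_a(r)$ is cyclic; odd Sylow subgroups of $F$ are then handled by \cite[Lemma 6]{C}, while the $2$-part leads to an extraspecial group of order $2^{2n+1}$, the forced equality $t=2^n+1=a+1$, an application of the inductive hypothesis to $(\cent Hx,\cent Vx)\in\nq$ for a non-central involution $x$, and a separate ${\rm GL}_2(r)$ endgame. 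Nothing in your local commutator bookkeeping substitutes for this, and your plan contains no induction at all.

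(ii) Your treatment of $q=2$ is likewise only a plan (``this pins down the structure of $F$''), and it misses that this case is vacuous under the hypothesis $q\nmid|V|$: by \cite[Proposition 2.3(a)]{DKP}, quoted in the paper's proof, $(H,V)\in\mathcal{N}_2$ forces $|V|$ to be a power of $2$, so $q=2$ cannot occur here; this is how the paper dismisses it at the outset. (iii) For (b), knowing that $B/\fit H$ is abelian and $B\leq\cent H{\lay H}$ does not yield $B=\fit H$; ``should show that any part of $B$ lying outside $\fit H$ would violate the fixed-point structure'' is again exactly the missing argument. The paper closes (b) concretely: $t\nmid|B|$ (Frattini argument plus \cite[Theorem 2.1]{MW} against non-solvability), then \cite[Lemma 6]{C} gives $B=\cent BT D$ with $D\in\syl 2B$, part (a) gives $[B,T]=1$, and hence $B\leq\cent HT$ is cyclic, so nilpotent and equal to $\fit H$. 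In short, your proposal is an accurate map of the terrain, but the statements that actually carry the result are asserted rather than proved, and completing them along your lines would in effect require rediscovering the paper's primitive-prime-divisor and induction machinery.
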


\begin{proof}
We start by proving (a), and we proceed by induction on $|H|$. 

Write $|V| = r^a$ and let $Q$ be a Sylow $q$-subgroup of $H$.
As $(H, V)  \in \nq$ and $r \neq q$, we have that 
$q \neq 2$, $a > 2$ and that $r^a \neq 2^6$ by Lemma~\ref{gl62}. 
So there exists a primitive prime divisor $t$ of $r^a - 1$ (note also that 
 $a$ divides $t-1$). 
Observe that $H$ has a subgroup $T$ of order $t$,  as 
$(|V|-1)/(|\cent VQ|-1) = |\syl qH|$ divides $|H|$. 

We can assume that $t$ does not divide the order of $F = \fit H$. 
In fact, assuming that $F$ has a non-trivial Sylow $t$-subgroup $T_0$, 
we have $H \leq \Gamma(V)$ (by~\cite[Theorem 2.1]{MW}), and $F$ is cyclic by~\cite[Lemma 3.7]{CDPS}. 

So, for every prime divisor $p$ of $|F|$, there is a $T$-stable Sylow $p$-subgroup $P$ of $F$. By Lemma 6 of \cite{C}, if \(p\neq 2\), then we have $P \leq \cent FT$; recalling that in \(\GL a r\) the centralizer of an element of order \(t\) is cyclic, we have that \(P\) is cyclic. Now, write $E = [\oh 2F, T]$ and observe that, by coprimality, $[E, T] = E$. 
If $E$ is cyclic, then $E=[E,T]=1$ (because the automorphism group of \(E\) is a \(2\)-group), so \(F\) centralizes \(T\) and it is therefore cyclic. 

Working by contradiction, assume that $E$ is non-cyclic. If \(A\) is a characteristic abelian subgroup of \(E\), then \(V\) is an irreducible \(AT\)-module (because \(t\) is a primitive prime divisor of \(r^a-1\)); moreover, the restriction of \(V\) to \(A\) is homogeneous, since otherwise it would have \(t\) homogeneous components, yielding the contradiction \(a\geq t>a\); thus, \(A\) is cyclic. As a consequence, $T$ centralizes every characteristic abelian subgroup of \(E\); since \([E,T]=E\), by \cite[24.7]{A} we have that $E$ is an extraspecial \(2\)-group. Write $|E| = 2^{2n +1}$, and let $z$ be the unique central involution of $E$. 

Observe that, by applying~\cite[Corollary 2.6]{MW} to an irreducible constituent of the homogeneous module $V_E$, we see that $2^n$ divides the dimension $a$ of $V$. 
From this we get $2^n +1 \leq a +1 \leq t$.

Moreover, $T$ acts faithfully on the symplectic module $E/\zent E$, so $t$ is a 
divisor of $2^{2i} - 1$ for some $i \leq n$. Therefore $t \leq 2^n +1$ and  
hence we conclude that $t = 2^n +1 = a+1$.  Thus, in particular, $V$ is an absolutely irreducible $E$-module (again by~\cite[Corollary 2.6]{MW}). 

Suppose first that $n \geq 2$. So there exists a non-central involution $x \in E$ and we set 
$W = \cent Vx$ and $X = \cent Hx$. 

We claim that $|W| = |V|^{1/2}$. In fact, if 
$\chi$ is the Brauer character corresponding to $V$, then $\chi$ is the 
only non-linear  character in $\irr E$ (note that $r \neq 2$, as $E$ acts 
faithfully and irreducibly on $V$), hence  $\chi(x) = 0$. 
So $[\chi_{\langle x \rangle}, 1_{\langle x \rangle}] = \chi(1)/2 = a/2$ is the 
dimension of the fixed-point space of $\langle x \rangle$ in $V$, and the 
claim is proved. 

So, by~\cite[Proposition~2.3(c)]{DKP}, $|W| > |\cent VQ|$ and  $q$ divides $|H:\cent H{W}|$. Thus, \cite[Lemma~2.4]{DKP} yields $(X,W) \in \nq$.
By induction, then $\fit{X/\cent XW}$ is cyclic. 
On the other hand, $X \cap E = \cent E{\langle x \rangle} = \langle x \rangle \times E_0$, 
where $E_0$ is an extraspecial group and $|E_0| = 2^{2(n-1)+1}$. 
Note that $\cent {E_0}W  = 1$, as $\cent {E_0}W$ is a normal subgroup of $E_0$ and it intersects trivially $ \zent{E_0} = \zent E =  \langle z \rangle$ (as $z$ acts as the 
inversion on $V$). So $E_0$ is isomorphic to a subgroup of $(X \cap E)\cent XW/\cent XW 
\leq \fit{X/\cent XW}$, which is cyclic, a contradiction. 

Hence, we have $n = 1$  and $a = 2$. 
Thus, we can identify $H$ with  a subgroup of ${\rm GL}_2(r)$.   
Let $K = H \cap {\rm SL}_2(r)$. If $(q, |K|) = 1$, then $H$ is $q$-nilpotent,
and then for every non-trivial $v \in V$, $\zent{\cent Hv}$ contains a Sylow $q$-subgroup of $H$. 
Therefore,  $H \leq \Gamma(V)$ by the Main Theorem of~\cite{C}. 
By~\cite[Lemma 2.1]{DKP} it follows $q =2$ and hence by~\cite[Proposition 2.3(a)]{DKP} we get
$r = 2 = q$, a contradiction. 

Assume now that $q$ divides $|K|$; then $(K, V) \in \nq$ and hence 
$K$ is a proper subgroup of ${\rm SL}_2(r)$ and $K \not\simeq A_5$ (as
$A_5 \simeq {\rm PSL}_2(5)$) by~\cite[Proposition 13]{C}. 
Hence $K$ and $H$ are  solvable and again $K$ and $H$ are subgroups of $\Gamma(V)$. 
Thus, as in the previous paragraph, we get $r = 2 = q$, a contradiction.    

\smallskip
It remains to prove part (b). Recalling that there exists a primitive prime divisor \(t\) for \(r^a-1\) that divides \(|H|\), we observe that \(t\) does not divide \(|B|\). In fact, assume that \(B\) has a non-trivial Sylow \(t\)-subgroup \(T_0\); then \(H=\norm H {T_0} B\). But \(\norm H {T_0}\) has an abelian normal subgroup (namely, \(\zent {T_0}\)) acting irreducibly on \(V\), hence \(\norm H {T_0}\) is metacyclic by \cite[Theorem~2.1]{MW}, contradicting the fact that \(H\) is not solvable.

Now, by \cite[Lemma 6]{C}, we have $B = \cent BT D$, where $D \in \syl 2 B$ is normalized by \(T\). 
  
Let $E = [B, T]$. Then $E$ is normal in $B$, and $E = [\cent BT D, T] = [D, T] \leq D$; therefore $E$ is a normal \(2\)-subgroup of \(B\). In particular, $E \leq\fit B=\fit H$ is cyclic by part (a). Finally, as the automorphism group of \(E\) is a \(2\)-group, \(T\) centralizes \(E\) and we get $[B,T]=[E,T] = 1$; thus $B \leq \cent HT$ is cyclic, and we get \(B=\fit H\), as claimed.  
\end{proof}

Next, we extend Proposition~13 of~\cite{C} to the case of even characteristic.  
\begin{lemma}
\label{Nq3}
Let $H=\PSL{u^{\alpha}}$, and \(M\) an \(H\)-module. Then, for any odd prime \(q\), there does not exist any \(H\)-module \(M\) such that $(H, M)$ satisfies $\nq$.  
\end{lemma}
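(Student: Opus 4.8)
The statement asserts that for $H = \PSL{u^{\alpha}}$ and any odd prime $q$, no faithful (equivalently, by Lemma~\ref{Nq1}, irreducible) $H$-module $M$ can make $(H,M)$ satisfy $\nq$. Let me think about how to approach this.

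The key structural facts I have available: Lemma~\ref{Nq1} says $M$ is elementary abelian over $\GF{r}$, irreducible, and (since $q \neq 2$) primitive. Since $H$ is simple and $\cent H M \neq H$ (as $q \mid |H:\cent H M|$), we have $\cent H M = 1$, so $M$ is faithful. So I need to rule out the existence of a faithful irreducible primitive $\GF{r}H$-module such that for every nonzero $v \in M$, some Sylow $q$-subgroup $Q$ of $H$ is normal in $\cent H v$.

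The condition $\nq$ with $q$ odd is very restrictive on centralizers of nonidentity module elements. Let me think about what $Q \nor \cent H v$ means. In $\PSL{u^{\alpha}}$, the structure of subgroups is completely understood (Dickson's list), and in particular the normalizers of odd Sylow subgroups and the cyclic maximal tori are explicit. I'd want to exploit that a Sylow $q$-subgroup $Q$ being normal in $\cent H v$ forces $\cent H v$ to be small and to lie inside the normalizer of $Q$, which is itself metacyclic (a dihedral-type or Frobenius normalizer of a torus).

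So here is the plan. First, reduce to the faithful primitive irreducible case via Lemma~\ref{Nq1}, and observe $q \mid |H|$. Let me write $H = \PSL{u^{\alpha}}$. The case $u = q$ (defining characteristic) and the case $u \neq q$ should be separated. In the nondefining case, $q$ divides $u^{\alpha}-1$ or $u^{\alpha}+1$, so a Sylow $q$-subgroup $Q$ is cyclic and lies in a cyclic maximal torus $C$ whose normalizer $N = \norm H C$ is a (generalized) dihedral group with $|N : C| = 2$. The condition $Q \nor \cent H v$ for all $v \neq 0$, together with $q \mid |H : \cent H M| = |H|$ (as $\cent H M = 1$), forces every nontrivial module element to have its centralizer inside such an $N$; counting fixed points and orbit sizes should then contradict the arithmetic of $|M| = r^a$ versus $|H|$. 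Concretely, I expect to use that the number of Sylow $q$-subgroups times $|\cent V Q|$-data must match $|M|-1$ being partitioned by centralizer orbits, and the dimension constraints (a primitive prime divisor argument as in Proposition~\ref{Nq2}) will be incompatible with the two-dimensional-like linear structure of $\PSL{u^{\alpha}}$-representations in cross characteristic.

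The main obstacle, and where the real work lies, is the defining-characteristic case $u = q$: here $Q$ is a Sylow $q$-subgroup of order $u^{\alpha}$ (the unipotent radical of a Borel), which is elementary abelian and decidedly noncyclic when $\alpha > 1$, so the torus-normalizer argument does not directly apply. For this case I would invoke the companion result for odd $u$, namely Proposition~13 of~\cite{C} (which the lemma is explicitly extending), to dispatch $u$ odd, and then concentrate on the genuinely new even-characteristic subcase $u = 2$, where $q$ is forced to be an odd divisor of $|\PSL{2^{\alpha}}| = 2^{\alpha}(2^{2\alpha}-1)$, hence $q \neq u$. Thus in fact $u = q$ cannot occur when $u = 2$ (since $q$ is odd), and the only defining-characteristic instance is $u$ odd, already handled by~\cite{C}. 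So the lemma reduces to: $u$ odd is Proposition~13 of~\cite{C}, and $u = 2$ is the cross-characteristic analysis above. I would therefore structure the proof as: cite~\cite{C} to assume $u = 2$, apply Lemma~\ref{Nq1} to get a faithful irreducible primitive module $M$ over $\GF r$ with $r \neq 2$ possible, and then run the torus-normalizer/fixed-point-counting argument against the known character degrees and subgroup structure of $\PSL{2^{\alpha}}$ to derive the contradiction. The delicate point will be controlling $\cent V Q$ and verifying that $\cent H v$ cannot contain $Q$ as a normal subgroup for all $v$, which I expect to handle by playing the primitive-divisor constraint on $\dim M$ against the fact that $q$-elements of $\PSL{2^{\alpha}}$ have comparatively large, self-normalizing-torus-based centralizers.
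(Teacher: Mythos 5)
Your opening moves match the paper exactly: the paper also begins by citing Proposition~13 of~\cite{C} to force $u=2$, and the counting identity $|\syl qH|=(r^a-1)/(r^b-1)$ that you gesture at is indeed the identity it exploits. But everything after that point in your plan is deferred with ``I expect'' and ``should then contradict,'' and that deferred part is the entire content of the lemma. The gap is concrete: the identity $n_q=(r^a-1)/(r^b-1)$, together with the observation that every point stabilizer lies in a dihedral torus normalizer, is by itself consistent with many parameter choices (orbit sizes bounded below by $|H:N|$ give no contradiction), and no primitive-prime-divisor constraint on $\dim M$ enters the argument at all. You also never rule out $r=2$; nothing in your setup prevents the module from being in characteristic $2$, where your ``cross-characteristic'' picture would not even make sense. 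Finally, you work with $q\mid 2^{2\alpha}-1$ and an unspecified torus, whereas the proof needs to pin down \emph{which} torus contains $Q$.

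The missing mechanism is the Frobenius structure of the Borel subgroup. The paper takes $U\in\syl 2H$ and the diagonal torus $D$, so that $UD=\norm HU$ is a Frobenius group with kernel $U$. First, $\cent MU=1$: otherwise some $\cent Hv$ would contain $U$ together with a normal Sylow $q$-subgroup $Q_1$, yielding a subgroup $U\times Q_1$ that $\PSL{2^{\alpha}}$ does not possess. This forces $r\neq 2$. Then \cite[15.16]{Is} (the fixed-point theorem for Frobenius actions) gives $\cent MD\neq 1$, which forces $D$ to normalize, hence contain, a Sylow $q$-subgroup $Q$ — so in fact $q\mid 2^{\alpha}-1$ — and, applied again to the Frobenius groups $UD$ and $UQ$, yields the exact dimension relations $a=|D|a_0$ and $b=(|D|/|Q|)a_0$, where $r^{a_0}=|\cent MD|$. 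Hence $c=a/b=|Q|$ is a power of $q$. The contradiction is then pure parity: $n_q=2^{\alpha-1}(2^{\alpha}+1)$ is even, while $1+r^b+\cdots+r^{b(c-1)}$ with $r$ odd is even only when $c$ is even, forcing $q=2$. Without this (or some equally precise substitute for controlling $a$ and $b$), your orbit-counting sketch cannot close.
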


\begin{proof}
  For a proof by contradiction, assume that \(M\) is an \(H\)-module and \(q\) is an odd prime such that \((H, M)\) satisfies \(\nq\). Then, by Proposition 13 of \cite{C}, we have \(u=2\).

Now, let $Q$ be a Sylow $q$-subgroup of $H$, and let $ r^a = |M|$, $r^b = |\cent MQ|$.  
Also, let $U$ be a Sylow $2$-subgroup of $H$ consisting of  the unipotent matrices
and let $D$ be the subgroup consisting of the diagonal matrices in $H$. 
Then $UD = \norm HU$ is a Frobenius group with complement $D$ and kernel $U$. 

We observe that $\cent MU = 1$: in fact, if $1 \neq v \in \cent MU$, then 
if $\cent Mv$ contains both $U$ and a Sylow $q$-subgroup $Q_1$ of $H$ as a 
normal subgroup; in particular, $U \times Q_1$ is a subgroup of $H$, which is impossible. 

As $\cent MU = 1$, then $r \neq 2$. Thus, by~\cite[15.16]{Is} we have that 
$\cent MD \neq 1$. Hence $D$ normalizes a Sylow $q$-subgroup  $Q$ of $H$ and, again 
by the knowledge of the subgroups of $H$, we get that $Q \leq D$ (and that $q$
divides $2^{\alpha}-1$). 
Hence, $|\norm HQ| = 2(2^{\alpha} -1)$ and 
$$|\syl qH|= 2^{{\alpha}-1}(2^{\alpha} +1) = \frac{r^a -1}{r^b -1} = 1 + r^b + \cdots + r^{b(c-1)}\; $$
Since ${\alpha} \geq 2$ (otherwise $H \simeq S_3$ and $q = 2$) and $r$ is odd, we get
that $c = a/b$ is even. 

Set $r^{a_0} = |\cent MD|$; using again~\cite[15.16]{Is}, we get 
$a = |D| a_0$ and $b = (|D|/|Q|)a_0$. Hence $c = |Q|$ and hence $q =2$, a 
contradiction.  
\end{proof}

\begin{lemma}
\label{Nq4}
Let $H=J_1$, the first Janko's sporadic simple group. Then, for any prime \(q\), there does not exist any \(H\)-module \(M\) such that $(H, M)$ satisfies $\nq$.
\end{lemma}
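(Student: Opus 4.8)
The plan is to derive a contradiction from the assumption that $(J_1, M) \in \nq$ for some prime $q$, exploiting the fact that $\nq$ forces a very rigid interplay between the fixed-point structure of the $q$-elements and the centralizer structure in $J_1$. First I would recall, as in the proof of Lemma~\ref{Nq3}, that by Lemma~\ref{Nq1} the module $M$ is an irreducible $\GF r$-module for a suitable prime $r$, and that $q$ must divide $|J_1|$. The order of $J_1$ is $2^3\cdot 3\cdot 5\cdot 7\cdot 11\cdot 19$, so $q\in\{2,3,5,7,11,19\}$; for each such $q$ I would look at a Sylow $q$-subgroup $Q$ and use the defining property of $\nq$, namely that for every nonzero $v\in M$ the centralizer $\cent {J_1}v$ normalizes some Sylow $q$-subgroup.

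The key structural observation to extract is that $\nq$ implies the Sylow $q$-subgroups of $J_1$ behave almost like the complement in a Frobenius action: the formula $|\syl q{J_1}| = (r^a-1)/(r^b-1)$ with $r^a = |M|$, $r^b=|\cent MQ|$, must hold, so $n_q = |J_1 : \norm {J_1}Q|$ has to be expressible as $1 + r^b + \cdots + r^{b(c-1)}$. Since $J_1$ is simple and its maximal subgroups and Sylow normalizers are completely known (from the ATLAS), I would compute $n_q$ for each candidate $q$ and check whether it can have this cyclotomic shape for some prime power $r^b$ with $r\neq q$. I expect most primes to be eliminated immediately by this numerical constraint, since the relevant indices $n_q$ in $J_1$ do not factor as $(r^a-1)/(r^b-1)$ for admissible $r$. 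For the surviving cases (if any), I would invoke the additional normalizer condition: $Q$ must be normal in $\cent {J_1}v$ for every $v$, which, combined with the fact that elements of coprime order to $q$ that fix a common nonzero vector would have to lie in $\norm{J_1}Q$, forces nilpotent or Frobenius-type subgroup structure that $J_1$ simply does not possess (for instance, $J_1$ has non-cyclic subgroups of various orders $pq$ that cannot embed in a Frobenius complement, echoing the final contradiction in Lemma~\ref{gl62}).

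The main obstacle will be handling the small primes, especially $q=2$, where the Sylow normalizer is larger and the cyclotomic condition on $n_q$ is less immediately restrictive. Here I would need to bring in the detailed subgroup structure: in $J_1$ the normalizer of a Sylow $2$-subgroup and the centralizers of involutions are known explicitly (the centralizer of an involution is $2 \times A_5$), and I would use the property that, under $\nq$, every subgroup of order coprime to $q$ fixing a nonzero vector must act fixed-point-freely in a way incompatible with this centralizer structure. In particular I would argue that the existence of a subgroup like $2\times A_5$ or a suitable $pq$-subgroup inside an involution centralizer contradicts the requirement that centralizers of nonzero vectors have a normal Sylow $q$-subgroup behaving as a Frobenius complement.

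Overall, the proof is essentially a finite case analysis over the six primes dividing $|J_1|$, with the cyclotomic identity for $n_q$ and the ATLAS data on maximal subgroups and element centralizers doing the elimination; the delicate point is ensuring that for the one or two arithmetically surviving primes the qualitative $\nq$ condition (normal Sylow $q$-subgroup in every vector centralizer) is genuinely violated by a concrete non-Frobenius section of $J_1$.
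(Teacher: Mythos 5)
Your approach is essentially identical to the paper's: its entire proof consists of observing that $\nq$ forces $|\syl q{J_1}| = (r^a-1)/(r^b-1) = 1 + r^b + \cdots + r^{b(c-1)}$ and then checking, from the ATLAS Sylow data, that for every prime $q$ dividing $|J_1|$ this equation has no solution in a prime $r$ and positive integers $b \mid a$ --- so no prime ``survives'' the arithmetic test, and the structural fallback arguments you prepare (involution centralizers, Frobenius-type sections) are never needed. One correction: you should not restrict the check to $r \neq q$, since condition $\nq$ does not force the characteristic of $M$ to be coprime to $q$ (for instance $({\rm L}_2(8), V)$ with $|V| = 2^6$ satisfies $\mathcal{N}_2$); for $J_1$ the equation also fails when $r = q$, so your conclusion is unaffected, but the case must be included.
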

\begin{proof}
Let \(q\) be a prime divisor of \(|H|\). Then \(|\syl q H|\) can be deduced from \cite{atlas}, and it can be checked that the equation
$$|\syl qH| = \frac{r^a -1}{r^b -1} = 1 + r^b + \cdots + r^{b(c-1)}\; $$
is not satisfied by any choice of a prime \(r\) and positive integers \(a\), \(b\) with \(b\mid a\) (and \(c=a/b\)).
\end{proof}

\section{A reduction}

This section is devoted to the proof of a technical result, which provides a useful reduction for Proposition~\ref{SectionPSL}. Lemma~\ref{AmazingReduction} is a generalization of  \cite[Lemma 3.1]{ACDKP}:  we extend that result (which deals with solvable groups) to groups whose generalized Fitting subgroup coincides with the Fitting subgroup. 

\begin{lemma}
\label{AmazingReduction}
Let $G$ be a group such that $\fitg G=\fit G$, and such that for every proper factor group $\overline{G}$ of $G$, we have
$\V{\overline{G}} \neq \V G$.
Let $M$ be a minimal normal subgroup of $G$, let $p \in \V G \setminus \V{G/M}$ and $P\in\syl p G$. Also, let $\pi_p$ be the set of vertices of the connected component
of $p$ in $\overline{\Delta}(G)$.
Then there exists a normal subgroup $W$ of $G$ such that $\pi_p \subseteq \V{W}$,
and either $\fit W = P$ with $P' = M$, or $\fit W = M$ with $M\cap\frat G=1$;
in particular, $\fit W$ is complemented in $G$.
\end{lemma}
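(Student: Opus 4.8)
## Proof Plan for Lemma~\ref{AmazingReduction}

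\textbf{Setup and overall strategy.} The plan is to produce the normal subgroup $W$ by analyzing the structure of $G$ relative to the minimal normal subgroup $M$, exploiting heavily the two standing hypotheses: that $\fitg G = \fit G$ (so $G$ has no non-abelian minimal normal subgroup interfering, and components are absent) and the ``$\V{\overline G}\neq \V G$'' minimality condition on proper quotients. The latter is the engine of the argument: since $p\in\V G\setminus\V{G/M}$, the prime $p$ ``lives on $M$'' in an essential way, and the minimality condition forces every vertex in the connected component $\pi_p$ of $p$ in $\overline\Delta(G)$ to also be sensitive to $M$. I would first record that, because $\V{G/M}\neq\V G$ while $p\in\V G\setminus\V{G/M}$, Ito--Michler applied to $G/M$ tells us $G/M$ has an abelian normal Sylow $p$-subgroup, so all $p$-power contributions to character degrees come from $M$ and its relation to $P\in\syl p G$. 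The dichotomy in the conclusion (either $\fit W=P$ with $P'=M$, or $\fit W=M$ complemented) should correspond to whether $M$ is the Sylow $p$-subgroup's derived-related piece or a module in cross-characteristic.

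\textbf{Key steps.} First I would show that $M$ is abelian: since $\fitg G=\fit G$, every minimal normal subgroup is contained in $\fit G$ and hence is abelian, so $M$ is an elementary abelian $s$-group for some prime $s$. The case analysis then splits on whether $s=p$ or $s\neq p$. When $s=p$, the aim is to show $\fit W=P$ with $P'=M$: here $M\leq P$, and I would argue that the non-vanishing of $p$ as a vertex tied to $M$ forces $P$ to be non-abelian with $P'=M$, using that $G/M$ has abelian Sylow $p$-subgroup (so $P/M$ abelian) together with the minimality condition to pin down $M=P'$. When $s\neq p$, the module $M$ is in coprime (or at least cross) characteristic, and I would aim for $\fit W=M$ with $M\cap\frat G=1$; the condition $M\cap\frat G=1$ is exactly what guarantees $M$ is complemented in $G$ (a minimal normal subgroup outside the Frattini subgroup is complemented), so the bulk of the work is showing $M\not\leq\frat G$, which I expect to follow from the fact that $p$ genuinely enters $\V G$ through the action on $M$ (a Frattini minimal normal subgroup cannot ``create'' new character-degree primes in this way). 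Throughout, to get $\pi_p\subseteq\V W$, I would build $W$ by taking the relevant normal closure and use the connectedness of $\pi_p$ in $\overline\Delta(G)$ together with Proposition~\ref{CS} to transport the non-adjacency information from $p$ across its whole component, ensuring every vertex of $\pi_p$ appears as a vertex of $\Delta(W)$.

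\textbf{Main obstacle.} The hardest part will be showing that the \emph{entire} component $\pi_p$ is captured inside a \emph{single} normal subgroup $W$ of the prescribed shape, rather than merely controlling $p$ itself. Controlling $p$ via $M$ is relatively direct from Ito--Michler and the Sylow analysis; the difficulty is propagating this along edges of $\overline\Delta(G)$ to every prime non-adjacent to $p$. I expect to lean on the minimality hypothesis $\V{\overline G}\neq\V G$ to argue that each $q\in\pi_p$ is likewise ``supported on $M$'' (otherwise $q$ would survive in a proper quotient in a way that contradicts either minimality or the non-adjacency $q\not\sim p$), and then to invoke Clifford theory (Clifford correspondence and Gallagher, together with Proposition~\ref{CS}) to realize these primes as degree-primes within $W$. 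Reconciling the two conclusion-branches—so that the \emph{same} $W$ works for all of $\pi_p$ simultaneously, and verifying $\fit W$ is as claimed in each branch—is where the genuine care is required; the complementation of $\fit W$ in $G$ should then fall out, either from $P'=M$ giving a suitable splitting in the $s=p$ case or from $M\cap\frat G=1$ in the $s\neq p$ case.
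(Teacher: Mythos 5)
Your preliminary observations are sound, and your dichotomy ($M$ a $p$-group versus an $s$-group for $s \neq p$) does coincide with the paper's dichotomy ($M \leq \frat G$ versus $M \cap \frat G = 1$): when $M$ is a $p$-group one gets $P\nor G$ and $P' = M$ essentially as you indicate, and when $M$ is a $p'$-group the same Ito--Michler argument forces $M \not\leq \frat G$, hence $M$ complemented. But these are only the opening lines of the paper's proof, and the two essential ingredients are missing from your plan. First, you never actually construct $W$. The paper uses the hypotheses $\fitg G = \fit G$ and $\V{\overline{G}} \neq \V G$ (the latter applied to \emph{all} Frattini minimal normal subgroups, not just $M$) to produce a decomposition $\fit G = K \times \ww M$ with $K \nor G$, where $\ww M = P$ or $\ww M = M$ according to the two cases, together with a complement $L$ of $\fit G$ in $G$; it then sets $H = L\ww M$ and defines $W = \cent HK$. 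This specific construction is what delivers $W \nor G$ and, crucially, $\fit W = W \cap \fit G = \ww M$. Your suggestion to ``take the relevant normal closure'' does not specify a subgroup, and nothing in your plan addresses why $\fit W$ should be exactly $P$ or $M$; note that $W = G$ trivially satisfies $\pi_p \subseteq \V W$, so the entire content of the lemma lies in cutting the Fitting subgroup down to the single factor $\ww M$ while retaining all of $\pi_p$.

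Second, the propagation step as you describe it would fail. You expect each $q \in \pi_p$ to be ``supported on $M$'' so that non-adjacency information can be transported from $p$; but $\pi_p$ is the connected \emph{component} of $p$ in $\overline{\Delta}(G)$, so a vertex $q$ at distance $\geq 2$ from $p$ is in general adjacent to $p$ in $\Delta(G)$ and remains a vertex of $G/M$ --- no Clifford analysis of characters lying over $\irr M$, and no appeal to the minimality hypothesis, will attach such a $q$ to $M$. The paper instead proves $\pi_p \subseteq \V W$ by induction on the distance $d_{\overline{\Delta}(G)}(p,q)$. The base case $d = 1$ is the Clifford/Gallagher argument you sketch. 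The inductive step is of a different nature: taking $t$ to be the predecessor of $q$ on a shortest path, induction gives $T \in \syl tW$, but $T$ is \emph{not} normal in $G$, so the base-case argument cannot be repeated; one forms $X = T^W$, $U = \Oh tX$, passes to a chief factor $U/V$ of $G$, and produces $\theta \in \irr{U/V}$ such that $t$ divides every degree over $\theta$ --- which for non-abelian $U/V$ requires Proposition~\ref{CS}, Proposition~\ref{2.8} and the extendibility of the Steinberg character of a Lie-type chief factor --- before the non-adjacency of $t$ and $q$ can be used to force a Sylow $q$-subgroup of $L$ to centralize $K$, i.e.\ to lie in $W$. This machinery, and the reason it is needed, is absent from your proposal, so the plan has a genuine gap precisely at the point you yourself identify as the main obstacle.
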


\begin{proof}
Let $\mathcal{N}_0$ be the set of the minimal normal subgroups of $G$ which are contained in $\frat G$.
For $N_0 \in \mathcal{N}_0$, if $p \in \V{G} \setminus \V{G/N_0}$ and $P \in \syl pG$, then
$PN_0/N_0$ is abelian and normal in $G/N_0$, so $P$ is normal in $G$ (as $\fit{G/N_0} = \fit{G}/N_0$) and
$P' = N_0$. Note that $p$ is therefore the unique prime in $\V{G} \setminus \V{G/N_0}$. In this situation, we define $\ww{N}_0= P$; obviously $\ww{N}_0$ is complemented in $G$ and, setting $F=\fit G$, there exists $K\nor G$ such that $F=K\times\ww{N}_0$.
On the other hand, let  $\mathcal{N}_1$ be the set of the minimal normal subgroups of $G$ that are \emph{not} contained in $\frat G$. If $N_1\in\mathcal{N}_1$, we define $\ww{N}_1 = N_1$: since  $\ww{N}_1 \cap \frat G = 1$, also in this case we have that $\ww{N}_1$ is complemented in $G$ (see 4.4 in \cite[III]{Hu}) and, as $\ww{N}_1 \leq \zent F$, there exists $K\nor G$ such that $F=K\times\ww{N}_1$.

Observe that $F$ is a direct product of subgroups $\ww N$, where $N$ varies in  $\mathcal{N}_0$ and in a suitable subset of $\mathcal{N}_1$. Moreover, if $\sigma$ is the set of prime divisors of $|N_0|$ for $N_0\in\mathcal{N}_0$, we have $\oh{{\sigma}'}{F}\cap\frat G=1$, and it is easily seen that $F$ itself has a complement $L$ in $G$.

Let now $M$ be a minimal normal subgroup of $G$. Take again $p \in \V{G} \setminus \V{G/M}$, $P \in \syl pG$, and let $\ww{M}$ be as above. As mentioned, we can write $F=K\times \ww M$ where $K$ is a suitable normal subgroup of $G$, thus $H = L\ww M$ is a complement for $K$ in $G$.

Define $W = \cent HK$. We have $W \nor G$,  $W \cap F = \ww M$ and hence $\fit W = \ww M$. Note also that, as $P$ commutes with $K$ modulo $M$, we get $[P, K] \leq M \cap K = 1$; in particular, $W$ contains the Sylow $p$-subgroups of $H$, and $p \in \V W$.

Let now $q\in \pi_p$ be a vertex of the connected component of $p$ in $\overline{\Delta}(G)$.  We prove, by induction on the distance $d = d_{\overline{\Delta}(G)}(p, q)$ that
 $q \in \V W$.

We consider first the case $d = 1$.
Given a character $\lambda \in \irr{\ww{M}}$ such that $\lambda_M \neq 1_M$, we have
that $p$ divides $\chi(1)$ for every $\chi \in \irr{G|\lambda}$.
Observe also that, both in the case $\ww M = P$ as also $\ww M = M$, $\lambda$ extends to
$I_G(\lambda)$.
Hence, Gallagher's Theorem and Clifford Correspondence, with
the nonadjacency between $q$ and $p$ in $\Delta(G)$, imply that $I_{KL}(\lambda)\simeq I_G(\lambda)/\ww M$ contains a
Sylow $q$-subgroup $Q_0$ of $LK$ as a normal subgroup, and that $Q_0$ is abelian.
Let now $Q$ be a Sylow $q$-subgroup of $L$; by a suitable choice of $\lambda$, we can assume that
$Q = Q_0 \cap L$.
Since $K$ centralizes $\ww M$,  $K$ is a normal subgroup of $I_{KL}(\lambda)$ as well.
Hence $Q$ centralizes
$K = (K \cap Q_0) \times \oh{q'}K$, i.e. $Q \leq W$ and so $q\in \V{W}$.
Observe that  $Q \neq 1$, as otherwise $(K \cap Q_0) \times M$ would contain an abelian normal Sylow
$q$-subgroup of $G$.

Assume now $d \geq 2$ and let $t$ be the vertex adjacent
to $q$ in a path of length $d$ from $p$ to $q$ in $\overline{\Delta}(G)$. 
Working by induction on $d$,  we have  $t \in \V{W}$. Let $T\in Syl_t(W)$.  Observe that, since $t\ne p$ and $\ww M=\fit W$,
  $T$ is certainly not normal in $W$.
Let $X = T^G=T^W$ (the normal closure of $T$ in $W$)  and $U = \Oh tX$.

  Let $U/V$ be a chief factor of $G$. Write $\overline G = G/V$ and use the bar convention.
  So $\overline U$ is a minimal normal subgroup of $\overline G$. Let $\overline C = \cent {\overline G}{\overline U}$.
  Note that if $\overline U$ is non-abelian, then we can assume that $|\overline U|$ is coprime to $q$, as otherwise certainly $q \in \V U \subseteq \V W$.
  On the other hand, if $\overline U$ is abelian, then $\cent {\overline U}{\overline T} = 1$ (since $\overline U = \cent {\overline U}{\overline T} \times [\overline U, \overline T]$ and
  $\overline T [\overline U, \overline T]$ is the normal closure of $\overline T$ in $\overline G$).  
  We observe  that there exists a character $\theta \in \irr {\overline U}$ such that $t$ divides $\chi(1)$ for all $\chi \in \irr{\overline G | \theta}$. 
  In fact, if $\overline U$ is abelian the prime $t$ divides the index $|\overline G: I_{\overline G}(\theta)|$
for all $1\ne \theta \in \irr {\overline U}$, while when $\overline U$ is non-abelian this follows from Proposition~\ref{CS}(b) (note that $t$ divides $|\overline G/\overline C|$ because
 ${\overline T}$ does not centralize \({\overline U}\); in fact, \([{\overline U},{\overline T}]=1\) would imply \({\overline X}={\overline T}\), whence \({\overline U}\) would be a \(t\)-group, against the fact that \(U= \Oh tX\)).
So, as $KU = K \times U$, we see that the Sylow $q$-subgroup of $K$ is abelian and that $K \simeq \overline K \leq \overline C$.

We also remark that $\overline U \cap \frat{\overline G} = 1$, as otherwise $\overline U\leq\frat{\overline G}$, so $\overline{T}$ is normal in $\overline G$ and 
$[\overline U,\overline T]=1$,  a contradiction.
If $\overline U$ is abelian, then $\overline U$ is complemented in $\overline G$ and $\theta$ extends to $I_{\overline G}(\theta)$.
So, by Gallagher's Theorem $I_{ \overline G}(\theta)/{ \overline U}$ contains a unique Sylow $q$-subgroup  of $\overline G$. 

Assume now that $\overline U$ is non-abelian. So $\overline U \overline C = \overline U \times \overline C$, and then we have that $\overline C$ has a normal abelian
Sylow $q$-subgroup.
If $q$ does not divide  $|\overline G/\overline C|$, then $\overline G$ has a normal abelian Sylow $q$-subgroup.
If  $q$ divides $|\overline G/\overline C|$, then
 by Proposition~\ref{CS}(c) and by Proposition~\ref{2.8} we deduce that $\overline U$ is a simple group of Lie type in characteristic $t$.
Let then  $\phi$ be the Steinberg character of $\overline U$. So $\phi$ is $\overline G$ invariant and it extends to $\overline G$ (since the corresponding character of
the almost simple group $\overline G / \overline C$ does). 
Hence, again  by Gallagher's Theorem $I_{ \overline G}(\theta)/{ \overline U}$ contains a unique Sylow $q$-subgroup  of $\overline G$. 

We have thus proved that, in all cases, there exists a Sylow $q$-subgroup $Q_0$ of $G$ such that $\overline{Q_0}\overline U /\overline U$ is the unique Sylow $q$-subgroup of
$I_{ \overline G}(\theta)/{ \overline U}$.

For every $\kappa \in \irr{\overline K}$ we have  $ \overline U \leq I_{\overline G}(\kappa)$; let us consider the character $\psi = \theta \times \kappa \in \irr{\overline U \times \overline K}$,
where $\theta \in \irr{\overline U}$ is as above. 
Note that $t$ divides $\chi(1)$ for every $\chi \in \irr{\overline G |\psi}$ (as $\chi$ lies over $\theta$).   As  $t$ and $q$ are not adjacent in $\Delta(\overline G)$,
we have that $I_{\overline G}(\psi) = I_{\overline G}(\theta) \cap I_{\overline G}(\kappa)$ contains a Sylow $q$-subgroup of $\overline G$.
As $\overline U \leq I_{\overline G}(\psi)$ and  $\overline{Q_0}\overline U /\overline U$ is the unique Sylow $q$-subgroup of
$I_{ \overline G}(\theta)/{ \overline U}$, 
this yields
$\overline{Q_0}\overline U \le I_{\overline {G}}(\psi) \le I_{\overline G}(\kappa)$, and this holds for every $\kappa \in \irr {\overline K}$.
Thus $\overline{Q_0}$ acts trivially on $\irr{\overline K}$. \
Now, by a suitable choice of (a conjugate of) $\theta$, we may assume that  $Q  = Q_0 \cap L \in \syl qL$.
Let $K_q$ and $K_{q'}$ be the Sylow $q$-subgroup and the $q$-complement of $K$, respectively, so that the nilpotent subgroup \(K\) decomposes as \(K_q\times K_{q'}\). Since $\overline Q$ acts trivially on
both $\irr{\overline{K_{q'}}}$ and $\irr{\overline{K_{q}}}$, we get  $[\overline Q, \overline{K_{q'}}] = 1$ by coprimality, but also $[\overline Q, \overline{K_{q}}] = 1$ by the fact that \({\overline K_q}\) is abelian. We conclude that $\overline{Q}$ acts trivially on $\overline K$. Thus $[Q, K ] \leq K \cap V = 1$ and hence $Q \leq W$. 

Finally, observe that $Q \neq 1$, as otherwise $F = K \times \ww M$ contains a full Sylow $q$-subgroup $Q_0$ of $G$ and $Q_0$  is abelian because both $Q_0 \cap K$ and
$Q_0 \cap \ww M$ are abelian (in fact, by the definition of $\ww M$, if $Q_0 \cap \ww M \neq 1$, then $\ww M$ is abelian).
As the Fitting subgroup $\ww M$ of $W$ has trivial intersection with $Q \leq L$, we conclude that $q \in \V W$, completing the proof.
\end{proof}

\section{Two special cases}

In view of Lemma~\ref{AmazingReduction}, two special situations turn out to be relevant in our analysis; these are treated in Lemma~\ref{specialcase} and Lemma~\ref{non-abelian}. For the first of these lemmas, the discussion about condition \(\nq\) that we carried out previously turns out to be critical.

\begin{lemma}
\label{specialcase}
Let \(G\) be a group, and let \(\pi=\{p_0,...,p_{d-1}\}\) be a subset of \(\V G\) such that \(d\neq 1\) is odd. Assume that \(\fit G=\fitg G\) is a minimal normal subgroup of \(G\), and that there exists a complement \(H\) for \(\fit G\) in \(G\). Assume also that \(\pi \not\subseteq \V H\). Then there is no cycle in $\overline{\Delta}(G)$ whose vertex set is   \(\pi\). 
\end{lemma}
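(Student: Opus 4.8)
The plan is to exploit the module structure forced by the hypotheses and to reduce the non-adjacencies recorded by the cycle to the condition \nq, after which the structural results of Section~3 apply.

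First I would record the shape of $G$. Since $\fit G=\fitg G$ is nilpotent and minimal normal, its centre is a nontrivial characteristic (hence normal) subgroup, so $M:=\fit G$ is abelian, and in fact an elementary abelian $r$-group for a prime $r$. As $\cent G{\fitg G}\leq\fitg G$, we get $\cent G M=M$, so the complement $H$ acts faithfully and (by minimality) irreducibly on $M$, and $G=M\rtimes H$. Each nontrivial $\theta\in\irr M$ is linear and extends to $I_G(\theta)=M\,I_H(\theta)$ (as $M$ is abelian and complemented in its inertia group); hence, by Gallagher's Theorem and the Clifford correspondence,
\[
\cd G=\cd H\ \cup\ \bigl\{\,|H:I_H(\theta)|\,\beta(1)\ :\ 1\neq\theta\in\irr M,\ \beta\in\irr{I_H(\theta)}\,\bigr\}.
\]
I then argue by contradiction, assuming that $\pi$ is the vertex set of a cycle $p_0,p_1,\dots,p_{d-1}$ of $\overline{\Delta}(G)$ (indices mod $d$), so that consecutive primes are non-adjacent in $\Delta(G)$.

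Next I would pin down a \emph{module prime}. Since $\pi\not\subseteq\V H$, I may relabel the cycle so that $p:=p_0\notin\V H$. By Ito--Michler applied to $H\cong G/M$, the group $H$ has a normal abelian Sylow $p$-subgroup $P$; because $P\nor H$, every stabilizer $T=I_H(\theta)$ contains $P\cap T$ as a normal abelian Sylow $p$-subgroup, so $p\notin\V T$ and thus $p\nmid\beta(1)$ for all $\beta\in\irr T$. Consequently, in the displayed description of $\cd G$, every degree divisible by $p$ forces $p\mid|H:I_H(\theta)|$; in particular $p$ divides some orbit length of $H$ on $M$ (this is also why $(H,M)\notin\nq$ for the prime $p$ itself). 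Dually, reading $p$-divisibility on $M$ rather than on $\irr M$, the characters $\theta$ contributing the $p$-divisible degrees correspond to the $v\in M$ moved by $P$, i.e.\ to $v\notin\cent M P$.

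The heart of the argument is to convert the non-adjacencies at $p$ into the condition \nq\ for a neighbouring prime. Let $q\in\{p_1,p_{d-1}\}$ be a cycle-neighbour of $p$. Non-adjacency of $p$ and $q$ means that no $p$-divisible degree is divisible by $q$; combined with the previous paragraph this gives, for every $\theta$ whose $H$-orbit has length divisible by $p$, that $q\nmid|H:I_H(\theta)|$ and $q\notin\V{I_H(\theta)}$, the latter yielding (again by Ito--Michler) a normal abelian Sylow $q$-subgroup of $I_H(\theta)$. Using the fixed-point-space estimates of \cite[Proposition~2.3, Lemma~2.4]{DKP} on the centralizers $\cent M v$ (for $v\notin\cent M P$), I would promote this to $(H,M)\in\nq$, or at least to $\nq$ on an appropriate section of $H$ acting on a fixed-point subspace. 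Once $\nq$ is in force I can run the classification of Section~3: Lemma~\ref{Nq1} records irreducibility (and primitivity for odd $q$); when $q\neq r$, Proposition~\ref{Nq2} forces $\fit H$ to be cyclic and, if $H$ is non-solvable, makes the solvable radical of $H$ equal to $\fit H$; Lemma~\ref{Nq3} and Lemma~\ref{Nq4} forbid $\PSL{u^\alpha}$- and $J_1$-type sections for odd $q$, while Lemma~\ref{gl62} disposes of the exceptional case $|M|=2^6$, and Theorem~\ref{coprimeNq} yields $H/\cent HM\leq\Gamma(M)$ in the solvable or $q$-nilpotent situations. This rigid structure of $H$ (together with Propositions~\ref{PSL2}, \ref{W} and \ref{2.8}) constrains the adjacencies of $\Delta(G)$ so tightly that $\overline{\Delta}(G)$ cannot contain a cycle on the odd set $\pi$: the induced non-adjacencies at $p$ split $\pi$ into two classes in a way incompatible with a cycle of odd length.

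The main obstacle I anticipate is precisely the promotion step of the previous paragraph: a priori the non-adjacency of $p$ and $q$ controls only the stabilizers $I_H(\theta)$ whose orbit length is divisible by $p$, i.e.\ the vectors outside $\cent M P$, whereas the definition of \nq\ demands a normal full Sylow $q$-subgroup inside $\cent H v$ for every nonzero $v\in M$, including the nonzero $P$-fixed points. Handling the fixed points $\cent M P\neq 0$ will require either a fixed-point-free reduction or the simultaneous use of a second prime of the odd cycle (together with the irreducibility of $M$) to cover all stabilizers; this is also where the parity encoded by the oddness of $d=|\pi|$ must be turned into the final contradiction.
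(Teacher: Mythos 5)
Your setup coincides with the paper's opening, but the proposal stalls exactly where you say it does, and the gap is real in two places. First, the ``promotion'' to $\nq$ that you flag as your main obstacle is in fact immediate, and the fix you did not find is the very first observation of the paper's proof: since $p=p_0\notin\V H$, Ito--Michler makes $P\in\syl pH$ normal in $H$, so $\cent MP$ is $H$-invariant and (as $M$ is abelian) also $M$-invariant, hence normal in $G=MH$; by minimality of $M$ it is $1$ or $M$, and it cannot be $M$ because that would put $P$ inside $\cent GM=\fitg G=M$. Thus $\cent MP=1$ (equivalently $\cent{\widehat M}P=1$), there are no nonzero $P$-fixed points at all, every nontrivial $\lambda\in\irr M$ has $p\mid |H:I_H(\lambda)|$, and your own degree computation then gives $(H,\widehat M)\in\nq$ for \emph{both} cycle-neighbours $q\in\{p_1,p_{d-1}\}$ of $p$. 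No fixed-point-space estimates from \cite{DKP} are needed; since you left this step as an unresolved alternative (``either a fixed-point-free reduction or \dots''), the condition $\nq$ on which your whole argument pivots is never actually established in your write-up.

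Second, and more seriously, even granting $\nq$ and Proposition~\ref{Nq2} (cyclic $\fit H$, equal to the solvable radical $B$ when $H$ is non-solvable), your endgame is a single asserted sentence, whereas this is where the paper's real work lies. The paper partitions $\pi$ into $\pi_0=\pi\cap\pi(|B|)$ and $\pi_1=\pi\setminus\pi_0$, shows that any path of $\overline{\Delta}(G)$ with vertices in $\pi_1$ forces a \emph{simple} minimal normal subgroup $\overline N$ of $H/B$ with all those primes dividing $|H/B:\cent{H/B}{\overline N}|$, and then proves three exclusion rules along the cycle: no two consecutive vertices lie in $\pi_0$ (the primes of $|B|$ form a clique in $\Delta(G)$); no $\pi_1$-stretch of length three between $\pi_0$-endpoints (via abelian Hall $\{t_2,t_3\}$-subgroups and \cite[Proposition~2.8(b)]{DKP}); and no stretch of length greater than four (via White's classification of degree graphs of simple groups, Lemmas~\ref{Nq3} and~\ref{Nq4}, and Schur multiplier arguments to handle the non-split case, which rule out $\PSL{u^\alpha}$ and $J_1$ sections). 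Only after these three facts does the oddness of $|\pi|$ produce a contradiction, through the forced alternation between $\pi_0$ and $\pi_1$ --- not through a dichotomy induced directly by the non-adjacencies at $p$, as your sketch suggests. The claim that the structure is ``so rigid'' that no odd cycle can exist is precisely the content that must be proved, and your proposal does not attempt it.
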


\begin{proof}
Let \(p\) be a prime number such that \(\oh p H\neq 1\) (note that \(p\in\V G\)). Setting \(M=\fit G\), we have \(\cent M {\oh p H}\neq M\); but \(\cent M {\oh p H}\nor G\) and \(M\) is minimal normal in \(G\), thus we get \(\cent M {\oh p H}=1\). Denoting by \({\widehat M}\) the dual group of \(M\), of course we also have \(\cent{\widehat M} {\oh p H}=1\); as a consequence, for every \(\lambda\in{\widehat M}\setminus\{1_M\}\), the index of \(I_H(\lambda)\) in \(H\) is divisible by \(p\). Let now \(q\) be a vertex of \(\Delta(G)\) which is not adjacent to \(p\). As \(M\) is abelian and complemented in \(G\), every irreducible character of \(M\) extends to its inertia subgroup in \(G\) and so, by Clifford Theory, \(I_H(\lambda)\) contains a unique Sylow \(q\)-subgroup of \(H\) (which is also abelian and clearly non-trivial). In other words, \((H,{\widehat M})\) satisfies \(\nq\).

For a proof by contradiction, assume that \(\pi\) is the set of all  vertices of  a cycle in \({\overline{\Delta}}(G)\) (say, \(\{p_i, p_{i+1}\}\) is an edge of \({\overline{\Delta}}(G)\) for every \(i\in\{0,...,d-2\}\), as well as \(\{p_{d-1},p_0\}\)). By our hypotheses, we can assume that \(p_0\) is not a vertex of \(\Delta(H)\), whence \(H\) has an abelian normal Sylow \(p_0\)-subgroup. Since \(\oh{p_0} H\neq 1\), the paragraph above yields that \((H, {\widehat M})\) satisfies \(\nq\) for \(q\in\{p_1,p_{d-1}\}\); moreover, one among \(p_1\) and \(p_{d-1}\) does not divide \(|M|\) and, as \(M=\fitg G\), \(H\) acts faithfully on \({\widehat M}\). Therefore, denoting by \(B\) the solvable radical of \(H\), Proposition~\ref{Nq2} yields that \(B=\fit H\) is cyclic. 

Now, define \(\pi_0\) to be the set of primes in \(\pi\) that are divisors of \(|B|\) (note that \(p_0\in\pi_0\)), and let \(\pi_1=\pi\setminus\pi_0\). Also, setting \({\overline H}=H/B\), adopt the bar convention for \({\overline H}\). We claim that \emph{if \(\sigma\) is a subset of \(\pi_1\), with \(|\sigma|\neq 1\), yielding a path in \({\overline{\Delta}}(G)\), then there exists a minimal normal subgroup \({\overline N}\) of \({\overline H}\) which is (non-abelian and) simple, and such that \(\sigma\subseteq\pi(|{\overline H}:\cent{\overline H}{\overline N}|)\)}. In fact, let \(t\) be in \(\sigma\); since \(B\) is the solvable radical of \(H\), the generalized Fitting subgroup of \({\overline H}\) is a direct product of non-abelian minimal normal subgroups, and the intersection of all the centralizers of these subgroups is therefore trivial. As a consequence, there exists a non-abelian minimal normal subgroup \({\overline N}\) of \({\overline H}\) such that \(t\mid |{\overline H}:{\overline C}|\), where \({\overline C}=\cent{\overline H}{\overline N}\). Note that, by Proposition~\ref{CS}(b), there exists \(\theta\in\irr{\overline N}\) such that \(t\) divides \(\chi(1)\) for all \(\chi\in\irr{{\overline H}|\theta}\). Let now \(s\) ($s \neq t$) be an element of \(\sigma\) which is adjacent to \(t\) in \({\overline{\Delta}}(G)\), and assume \(s\mid |{\overline C}|\); since \(\oh s{\overline H}\) is clearly trivial, there exists \(\phi\in\irr{\overline C}\) whose degree is divisible by \(s\). Considering \(\theta\times\phi\in\irr{{\overline N}{\overline C}}\), we see that \(st\) divides \(\xi(1)\) for every \(\xi\in\irr{{\overline H}|\theta\times\phi}\), a contradiction. We conclude that also \(s\) is a divisor of \( |{\overline H}:{\overline C}|\) and, iterating this process, we get \(\sigma\subseteq\pi(|{\overline H}:{\overline C}|)\). The fact that \({\overline N}\) is simple follows by Proposition~\ref{CS}(c), and our claim is proved.

\medskip
As the next step, we prove that certain configurations concerning the distribution of the primes of \(\pi\) in the two parts \(\pi_0\) and \(\pi_1\) are not allowed. Namely:

\medskip
\noindent(a) \emph{Two consecutive vertices in \(\pi\) cannot be both in \(\pi_0\).} This follows from the fact that, \(B\) being the second Fitting subgroup of \(G\), the prime divisors of \(|B|\) induce a clique in \(\Delta(G)\) (see Proposition 17.3 in \cite{MW}). 

\medskip
\noindent(b) \emph{A path of consecutive vertices in \(\pi\), having the two (possibly coincident) extremes in \(\pi_0\) and all the other vertices in \(\pi_1\), cannot have length three}. In fact, arguing by contradiction, assume that \(t_1,t_2,t_3,t_4\) are consecutive vertices of a path in \({\overline{\Delta}}(G)\), with \(\{t_1,t_4\}\subseteq\pi_0\) and \(\{t_2,t_3\}\subseteq\pi_1\) (we allow \(t_1=t_4\)). Then, as shown in the first paragraph of this proof, \((H,{\widehat M})\) satisfies \(\nq\) for \(q\in\{t_2,t_3\}\). In other words, if \(\lambda\in{\widehat M}\setminus\{1_M\}\), then \(I_H(\lambda)\) contains both a Sylow \(t_2\)-subgroup and a Sylow \(t_3\)-subgroup of \(H\) as normal subgroups, and these Sylow subgroups are abelian; in particular, \(H\) has abelian Hall \(\{t_2,t_3\}\)-subgroups, and the same clearly holds for every normal section of \(H\). Take now a minimal normal subgroup  \({\overline N}\) of \({\overline H}\) which is simple and such that \(\{t_2,t_3\}\subseteq\pi(|{\overline H}:\cent{\overline H}{\overline N}|)\), as in the third paragraph of this proof; we have that \({\overline H}/\cent{\overline H}{\overline N}\) is an almost-simple group with abelian Hall \(\{t_2,t_3\}\)-subgroups,  such that \(t_2\) and \(t_3\) are nonadjacent vertices of \(\Delta({\overline H}/\cent{\overline H}{\overline N})\). This contradicts Proposition~2.8(b) of \cite{DKP}, and the desired conclusion follows.

\medskip
\noindent(c) \emph{A path of consecutive vertices in \(\pi\), having the two (possibly coincident) extremes in \(\pi_0\) and all the other vertices in \(\pi_1\), cannot have length larger than four}. For a proof by contradiction, assume that \(\{t_i\mid i\in\{1,...,k\}\}\) is a set of consecutive vertices of a path in \({\overline \Delta}(G)\), with \(\{t_1,t_k\}\subseteq\pi_0\), \(\{t_2,...,t_{k-1}\}\subseteq\pi_1\), and \(k> 5\) (we allow \(t_1=t_k\)). As in (b), we consider a minimal normal subgroup  \({\overline N}\) of \({\overline H}\) which is simple and such that \(\{t_2,...,t_{k-1}\}\subseteq\pi(|{\overline H}:\cent{\overline H}{\overline N}|)\). If a prime \(t\in\{t_3,...,t_{k-2}\}\) does not divide \(|{\overline N}|\) then, denoting by \(r\) and \(s\) the two vertices in \(\{t_2,...,t_{k-1}\}\) preceding and following \(t\) in that sequence, Proposition~2.8(b) of \cite{DKP} yields that \({\overline N}\) is a group of Lie type both in characteristic \(r\) and \(s\). It follows that one among \(r\) and \(s\) is \(2\), and \(t\) (which is now a prime divisor of \(|\out{\overline N}|\)) is \(2\) as well, a contradiction. For the same reason, we see that \(t_2\) and \(t_{k-1}\) cannot be both coprime with \(|{\overline N}|\). We conclude that either \(\{t_2,...,t_{k-1}\}\subseteq\pi({\overline N})\), or (say) \(t_2\) is the unique element of \(\{t_2,...,t_{k-1}\}\) not dividing the order of \({\overline N}\), which is in this case a group of Lie type in characteristic \(t_3\). 

An inspection of the degree graphs of non-abelian simple groups (which are all described in \cite{W}) yields that both situations imply \({\overline N}\simeq\PSL{u^{\alpha}}\) for a suitable prime \(u\) and \(\alpha\in\N\) (with \(u^{\alpha}\geq 4\)), or \({\overline N}\simeq J_1\), the first sporadic Janko's group. 

Let \(N\) be the normal subgroup of \(G\), containing \(B\), such that \(N/B={\overline N}\), and assume for the moment that \(B\) has a direct complement \(T\) in \(N\). By the first paragraph in this proof, we have that \((T,{\widehat M})\) satisfies \(\nq\) at least for \(q=t_{k-1}\), and also for \(q=t_{2}\) unless \(T\simeq{\overline N}\) is a group of Lie type in characteristic \(t_3\). In the former case, since one among \(t_2\) and \(t_{k-1}\) is an odd prime, the paragraph above together with Lemma~\ref{Nq3} and Lemma~\ref{Nq4} yield a contradiction. In the latter case, the same considerations show that we must have \(T\simeq\PSL{u^{\alpha}}\) for a suitable \(\alpha\in\N\), where \(u=t_3\) is an odd prime and \(t_{k-1}=2\); but even in this case we reach a contradiction, because the prime \(2\) is adjacent  in \(\Delta(\PSL{u^{\alpha}})\) (with \(u\neq 2\)) to every prime divisor of \(|\PSL{u^{\alpha}}|\) except \(u\), against the fact that \(t_{k-1}\) is adjacent to \(t_{k-2}\) in \({\overline \Delta}(G)\). 

It remains to consider the case in which \(B\) does not have a direct complement in \(N\). Note that \(\cent N B\) is a normal subgroup of \(N\) containing \(B\), therefore it is either \(B\) or \(N\); but if \(\cent N B=B\), then \({\overline N}\) embeds into the abelian group \(\aut B\) (recall that \(B\) is cyclic), a contradiction. Therefore we get \(B=\zent N\). Now, we have \(N=BN'\), thus \(N'\cap B\neq 1\) as otherwise \(N'\) would be a direct complement for \(B\) in \(N\); moreover, \(N'\) is perfect (see \cite[33.3]{A}), so \(N'\cap B\) is a non-trivial subgroup of the Schur multiplier of \(N'/(N'\cap B)\simeq {\overline N}\). As a consequence, we immediately exclude the case \({\overline N}\simeq J_1\) and \({\overline N}\simeq \PSL {2^{\alpha}}\) for \(\alpha\neq 2\), whose Shur multipliers are trivial. If \({\overline N}\simeq\PSL{u^{\alpha}}\) where \(u\) is an odd prime and \(u^{\alpha}\neq 9\), then we get \(p_0=2=|N'\cap B|\) (hence \(t_1=t_k=2\)); but \(2\) is adjacent in \(\Delta(G)\) to every prime divisor of \(|{\overline N}|\), clearly contradicting our setting. Finally, \(\PSL 4\simeq A_5\) and \(\PSL 9\simeq A_6\) have order divisible by only three primes, again contradicting our assumptions, and concluding the proof of (c).

\medskip
It is straightforward that conditions (a), (b) and (c) are not compatible with the fact that \(|\pi|\) is an odd number. This contradiction completes the proof.
\end{proof}

In order to treat (in Lemma~\ref{non-abelian}) the other relevant case arising from Lemma~\ref{AmazingReduction}, we gather together some facts that were essentially pointed out in \cite[Lemma~3.9]{CDPS} and in \cite[Lemma~2.2 and Lemma~3.2]{ACDKP}, but in a slightly different context (namely, those results deal with solvable groups). The proof of the following result is similar to those of the aforementioned lemmas (the main difference is that a key tool here is Theorem~\ref{coprimeNq}), but we decided to include it here in full details for the convenience of the reader. 

\begin{lemma}\label{new3.2} Let \(p\) be a prime, \(E\) an elementary abelian \(p\)-group, and \(H\) a \(p'\)-group acting faithfully on \(E\). Set \(G=EH\), and $\pi_0 = \pi (\fit H)$. Also, let $q \in\pi_0$ and $s\in \pi(H)$ be distinct vertices that are not adjacent in $\Delta(G)$. Let $Q=\oh q H, S\in \syl s H$ and $L = (QS)^H$ (the normal closure of QS in H). Then the following conclusions hold.	
\begin{enumeratei}
\item \(E=A\times B\), where \(A=[E,Q]=[E,L]\) and \(B=\cent EQ= \cent EL\). 
\item $L = L_0S \leq \Gamma(A)$, where $L_0 = \fit L$ is a cyclic group acting fixed-point freely and irreducibly on $A$.
\item Setting \(|A|=p^n\), there exists a primitive prime divisor \(t\) of \((p,n)\) such that \(t\) divides \(|L_0|\).
\item Let  $r$ be a prime divisor of $|H|$ such that  $\{s,r\} \in \cE(G)$. Then $r \in \pi_0$. Moreover, \(H\) has both a  normal abelian Sylow \(q\)-subgroup and Sylow \(r\)-subgroup. \item Let $p_1, p_2, p_3, p_4$ be distinct prime divisors of $|H|$ such that $\{ p_i, p_{i+1}\} \in \cE(G)$ for \(i\in\{1,2,3\}\), and $p_1\in \pi_0$. Then $\{p_1, p_4\} \in \cE(G)$.
\end{enumeratei}
\label{alternanza}
\end{lemma}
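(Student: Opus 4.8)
The plan is to derive (e) from the earlier parts together with a direct analysis of the action of $H$ on the dual module $\widehat E$. First I would apply part (d) to the sub-path $p_1,p_2,p_3$ (legitimate since $p_1\in\pi_0$ and $\{p_1,p_2\},\{p_2,p_3\}\in\cE(G)$): this yields $p_3\in\pi_0$ and, crucially, that $H$ has normal abelian Sylow $p_1$- and $p_3$-subgroups $P_1=\oh{p_1}H$ and $P_3=\oh{p_3}H$. Writing the degrees of $G=EH$ through Clifford theory over the abelian normal subgroup $E$, every $\chi\in\irr{G|\lambda}$ (with $\lambda\in\widehat E$) has degree $|H:I_H(\lambda)|\,\beta(1)$ for some $\beta\in\irr{I_H(\lambda)}$; since $P_1\cap I_H(\lambda)$ and $P_3\cap I_H(\lambda)$ are normal abelian Sylow subgroups of $I_H(\lambda)$, It\^o's theorem gives $p_1,p_3\nmid\beta(1)$. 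Hence, for $a\in\{p_1,p_3\}$, one has $a\mid\chi(1)$ if and only if $P_a$ moves $\lambda$, that is $\lambda\notin\cent{\widehat E}{P_a}$. This dictionary converts every non-adjacency involving $p_1$ or $p_3$ into a statement about the $H$-submodules $\cent{\widehat E}{P_1}$ and $\cent{\widehat E}{P_3}$.

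Next I would apply parts (a)--(c) to the pair $(p_3,p_4)$, obtaining the $H$-invariant splitting $E=A'\times B'$ with $A'=[E,P_3]$, $B'=\cent E{P_3}$, and $L'=(P_3S')^H=L_0'S'\le\Gamma(A')$, where $S'\in\syl{p_4}H$ and $L_0'=\fit{L'}$ is cyclic and acts fixed-point-freely and irreducibly on $A'$, with $P_3\le L_0'$. A short orbit argument then shows $p_4\nmid|L_0'|$: otherwise the Sylow $p_4$-subgroup $R$ of the cyclic, fixed-point-free group $L_0'$ would, like $P_3$, move every non-zero $\mu\in\widehat{A'}$, so a full Sylow $p_4$-subgroup $P_4\ge R$ and $P_3$ both lie outside $I_H(\mu)$, forcing $p_3p_4\mid|H:I_H(\mu)|\mid\chi(1)$ and contradicting $\{p_3,p_4\}\in\cE(G)$. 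Thus $p_4$ is a ``Galois'' prime for $A'$, and via the It\^o dictionary the non-adjacency $\{p_3,p_4\}\in\cE(G)$ reads: whenever $p_4\mid\chi(1)$, the underlying $\lambda$ lies in $\cent{\widehat E}{P_3}$. Assuming for contradiction that $\{p_1,p_4\}\notin\cE(G)$ then produces a $\chi$ with $p_1p_4\mid\chi(1)$ over some $\lambda\in\cent{\widehat E}{P_3}$ with $\lambda\notin\cent{\widehat E}{P_1}$; so the entire statement reduces to the module containment $\cent{\widehat E}{P_3}\subseteq\cent{\widehat E}{P_1}$, equivalently that $P_3$ acts fixed-point-freely on $A:=[E,P_1]$.

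To attack this containment I would run the symmetric construction for the pair $(p_1,p_2)$, getting $E=A\times B$ with $A=[E,P_1]$ and the cyclic group $L_0=\fit{(P_1S)^H}$ acting irreducibly on $A$ (again $p_2\nmid|L_0|$, by the same orbit argument). If $p_3\mid|L_0|$ then $P_3\cap L_0\neq1$ is a non-trivial torus element, already fixed-point-free on $A$, whence $\cent A{P_3}=1$ and we are done. If instead $p_3\nmid|L_0|$, then, since $(P_1S)^H\nor H$ forces $L_0\nor H$, the two normal coprime subgroups $L_0$ and $P_3$ satisfy $[L_0,P_3]\le L_0\cap P_3=1$ and so centralise one another; consequently $\cent A{P_3}$ is an $L_0$-submodule of the irreducible module $A$, and $P_3$ either acts fixed-point-freely on $A$ (exactly the containment we want) or centralises $A$ outright.

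The hard part will be this last case, where $P_3$ centralises $A=[E,P_1]$ and the desired containment fails. Here I expect to have to use the two still-unexploited non-adjacencies $\{p_1,p_2\},\{p_2,p_3\}\in\cE(G)$ together: the It\^o dictionary forces every degree divisible by $p_2$ to arise from a $\lambda$ killed by both $P_1$ and $P_3$, i.e. $\lambda\in\cent{\widehat E}{P_1}\cap\cent{\widehat E}{P_3}$, which tightly restricts how the Galois prime $p_2$ can occur in the semilinear group $\Gamma(A)$. Reconciling this with the fact that the Galois prime $p_4$ divides a degree over a $\lambda$ moved by $P_1$ should yield a character witnessing either $\{p_2,p_3\}\notin\cE(G)$ or $\{p_3,p_4\}\notin\cE(G)$, the contradiction that closes the argument. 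Managing the interaction of the two different decompositions $E=A\times B$ and $E=A'\times B'$, and using the primitive prime divisors supplied by part (c) to pin down the relevant field degrees, is where the real work lies; once $\cent{\widehat E}{P_3}\subseteq\cent{\widehat E}{P_1}$ is secured, the conclusion $\{p_1,p_4\}\in\cE(G)$ is immediate.
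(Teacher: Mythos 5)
Your reduction is sound as far as it goes, and it runs parallel to the paper's own proof: the application of (d) to $p_1,p_2,p_3$, the It\^o dictionary for the primes $p_1,p_3$ possessing normal abelian Sylow subgroups, the orbit argument showing $p_4\nmid|L_0'|$, and the dichotomy $\cent A{P_3}\in\{1,A\}$ (valid because $\cent A{P_3}$ is an $L_0$-submodule of the irreducible $L_0$-module $A=[E,P_1]$) are all correct. Indeed the paper's proof turns on the same dichotomy: its case $U_1=U_3$ is your case $\cent A{P_3}=1$, and its case $U_1\neq U_3$ is your case $[A,P_3]=1$. But exactly at that last case your text stops being a proof: the final paragraph consists of expectations (``I expect to have to use\dots'', ``should yield a character witnessing\dots'') rather than an argument, and the direction it gestures at --- constraining how the Galois prime $p_2$ sits in $\Gamma(A)$ and ``pinning down field degrees'' via the primitive prime divisors of part (c) --- is not what closes the case. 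This is a genuine gap, and it is not a peripheral one: the excluded configuration is precisely the heart of the paper's proof of (e).

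The missing argument can be completed with the objects you already constructed. Suppose $[A,P_3]=1$. Then $A'=[E,P_3]=[B,P_3]\le B=\cent E{P_1}$, and by part (a) applied to each pair, $L_1:=(P_1S)^H$ centralizes $B\supseteq A'$ and $L_3:=(P_3S')^H$ centralizes $B'\supseteq A$. Since $L_1$ acts faithfully on $A$ (part (b)), $L_1\cap L_3\le\cent{L_1}A=1$, and as both are normal in $H$ they commute; so $L:=L_1L_3=L_1\times L_3$, while $E=A\times A'\times(B\cap B')$ and $M:=\cent EL=B\cap B'$. Hence $EL/M\simeq(AL_1)\times(A'L_3)$. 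Now apply Ito--Michler to each factor: the Sylow $p_2$-subgroup $S$ of $AL_1$ cannot be normal there (else $[A,S]\le A\cap S=1$, against faithfulness), and the Sylow $p_3$-subgroup $P_3$ of $A'L_3$ cannot be normal there (else $[A',P_3]=1$, whereas $[A',P_3]=A'\neq1$ by coprimality). So $p_2$ divides some degree of $AL_1$ and $p_3$ divides some degree of $A'L_3$, whence $p_2p_3$ divides a degree of $EL/M$; since $EL\nor G$, Clifford theory shows $p_2p_3$ divides a degree of $G$, contradicting $\{p_2,p_3\}\in\cE(G)$. This construction of a character of a normal subgroup of degree divisible by $p_2p_3$ --- not an analysis of field degrees --- is the step your proposal is missing; without it (or an equivalent), the proof of (e) is incomplete.
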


\begin{proof}
  Let $A = [E,Q]$ and $B = \cent EQ$.  
  Consider the action of $G$ on  $\widehat{E}=\widehat{A} \times\widehat{B}$, where $\widehat{E}=$Irr$(E)$ is the dual group of \(E\). We know that for  $\alpha \in   \widehat A \setminus \{1\}$, $q$ divides $|G : \cent G{\alpha}| = |H : \cent H{\alpha}|$. Also, the linear character $\alpha$ extends to $\cent G{\alpha}$, because $A$ has a complement (namely $B\cent H{\alpha})$ in $\cent G{\alpha}$. Thus, by Gallagher's Theorem and Clifford Correspondence, this forces $\cent H{\alpha} \simeq  \cent G{\alpha}/E$ to contain an $H$-conjugate of $S$ as a normal subgroup (and also, $S$ is abelian). Let $\alpha \in \widehat{A}\setminus\{1\}$ be such that $S \leq  \cent H{\alpha}$ and let $\beta \in \widehat{B}$; then $\cent H{\alpha\times \beta} =\cent H{\alpha} \cap \cent H{\beta}$  and $q$ divides $|H : \cent H{\alpha\times \beta}|$. As $\alpha\times \beta$ extends to its inertia subgroup in $G$, using as above Clifford Theory and that no irreducible character of $G$ has degree divisible by $qs$, we get that the unique Sylow $s$-subgroup $S$ of $\cent H{\alpha}$ must also be contained in $\cent H{\beta}$. We conclude that $S$ acts trivially on $\widehat{B}$ and hence that $S \leq \cent H B \unlhd H$. Thus $L = (QS)^H \leq \cent HB$, so that $B=\cent EL$. Moreover, we get $[E,L] =[A \times B,L] = [A,L]\leq A$, hence from $E = B \times [A, L]$ we deduce that 
$A = [E,L]$ and (a) is proved.

Since all Sylow $s$-subgroups of $H$ are contained in $L$, we have that $\cent L{\alpha}$
contains an $L$-conjugate of $S$ as a normal subgroup for every $\alpha \in \widehat{A}\setminus \{1\}$, i.e., $(L, \widehat{A})$ satisfies $\mathcal{N}_s$.  
Hence, as \(|L|\) is coprime to $|\widehat{A}|$, an application of Theorem~\ref{coprimeNq} yields that  $L \leq  \Gamma(A)$, as $\cent LA=1$.  Let $L_0=L \cap \Gamma_0(A)$.  By \cite[Lemma 2.1]{DKP}, we  have that $s$ does not divide $|L_0|$, whereas $d=|S|$ divides $n$, and $(p^n- 1)/(p^{n/d} -1)$ divides $|L_0|$.
Note that  $p^n-1$ has a  primitive prime divisor $t$. In fact,  otherwise, either $n =2$ or $p^n =2^6$. In both cases, as $s$ and $p$ are distinct primes, $s$ divides $(p^n-1)/(p^{n/d}-1)$, so $s$ divides $|L_0|$, a contradiction. This proves (c), and Lemma 3.7 in \cite{CDPS},  yields $L_0= \fit L$. 
		
Clearly $t$ divides $(p^n-1)/(p^{n/d}-1)$, hence it divides $|L_0|$. Denoting by $T_0$ the subgroup of $L_0$ with $|T_0|= t$, by Lemma 3.7 in \cite{CDPS},  it follows that $\cent L{T_0} = L_0$. Note that $t$ is larger than $n$ and hence, as $|L/L_0|$ divides $n$, we get that a Sylow $t$-subgroup of $L$ is contained in $L_0$. This implies that $Q \leq L_0$, since $Q \unlhd H$ centralizes $T_0$. But now both $Q$ and $S$ lie in $L_0S$; moreover, as $L/L_0$ is cyclic, $L_0S$ is normal in $H$. So $L= L_0S$.
As $L_0$ is cyclic and acts fixed point freely on $\widehat{A}$, the proof of (b) is complete. 

Set $K = \cent HA$, and observe that \(A\) is a faithful \(H/K\)-module over \(\GF p\). As clearly $L\cap K = 1$, claims (b) and (c) ensure that \(\fit{LK/K}\) has a characteristic subgroup \(T/K\) of order \(t\), which is therefore characteristic in \(LK/K\) and hence normal in \(H/K\). Since \(t\) is a primitive prime divisor of \((p,n)\), the action of \(T/K\) on \(A\) is easily seen to be irreducible, and therefore \(H/K\) embeds in \(\Gamma(A)\) (see Theorem~2.1 in \cite{MW}, for instance). As a consequence, denoting by \(X/K\) the Fitting subgroup of \(H/K\),
by~\cite[Lemma 2.1]{CDPS} the prime divisors of \(H/X\) constitute a clique in \(\Delta(H/K)\), which is a subgraph of \(\Delta(G)\); moreover, Lemma~3.7 in \cite{CDPS} yields that \(X/K\) is cyclic.
	
As $(L, \widehat{A})$ satisfies $\mathcal{N}_s$, it follows that $(H/K, \widehat{A})$ satisfies $\mathcal{N}_s$ and hence $\oh s{H/K} = 1$. In particular,  
\(s\) is a divisor of \(H/X\). Now, taking into account the conclusion of the paragraph above, if \(r\in\pi(H)\) is nonadjacent to \(s\) in \(\Delta(G)\), then \(r\nmid|H/X|\). 
As a consequence, \(H/K\) has a cyclic normal Sylow \(r\)-subgroup, thus \(r\not\in \V{H/K}\). Observe also that \(r\not\in \V K\) as well. In fact, certainly there exists \(\theta\in\irr L\) such that \(\theta(1)\) is divisible by \(s\); thus, if \(\phi\in\irr K\) has a degree divisible by \(r\), then, for any \(\xi\in\irr H\) such that \(\theta\times\phi\in\irr{LK}\) is a constituent of \(\xi_{LK}\), we would have \(r\cdot s\mid\xi(1)\), against the nonadjacency between \(r\) and \(s\) in \(\Delta(G)\). Now, let \(\chi\) be in \(\irr H\), and let \(\beta\) be an irreducible constituent of \(\chi_K\): we have \[\chi(1)=e\cdot\beta(1)\cdot|H:I_H(\beta)|,\] where \(e\) is the degree of an irreducible projective representation of \(I_H(\beta)/K\). If \(r\) divides \(|H:I_H(\beta)|\), then it also divides \(|H:I_H(\theta\times\beta)|\) (where \(\theta\in\irr L\) is as above), and therefore any \(\xi\in\irr{H\mid\theta\times\beta}\) would have a degree divisible by \(r\cdot s\), again a contradiction. Also, \( e\) is the degree of an irreducible ordinary representation of a Schur covering \(\Gamma\) of \(I_H(\beta)/K\); but, since \(I_H(\beta)/K\) has a cyclic normal Sylow \(r\)-subgroup, we have that \(\Gamma\) has an abelian normal Sylow \(r\)-subgroup, therefore \(r\) does not divide \(e\) as well. Since we observed that \(\beta(1)\) is not divisible by \(r\), we conclude that \(r\nmid\chi(1)\); as this holds for every \(\chi\in\irr H\), we get that \(H\) has an abelian normal Sylow \(r\)-subgroup, and the proof of  (d) is complete.
		
Let $p_1, p_2, p_3, p_4$ be as in the assumptions of part (e). Then (d) yields $\{p_1, p_3\}\subseteq \pi_0$
; actually, setting $P_i \in \syl {{p_i}}H$ for $i = 1, 2, 3, 4$, we have that \(P_1\) and \(P_3\) are abelian and normal in \(H\). Moreover, Lemma~2.1 of~\cite{CDPS} yields that $\{p_2, p_4\}\cap \pi_0=\emptyset$. 		
Write $L_1 = (P_1P_2)^H$, $L_3 = (P_3P_4)^H$, $L = L_1L_3$, $M=\cent EL$ and $V = E/M$. Also, let $U_1 = [ V, P_1]$ and $U_3 = [V, P_3]$. Then, clearly,  $V = U_1U_3$, and, again by (b) and (a), we have that $U_1 = [V, L_1]$ and $\ U_3 = [V,L_3]$ are irreducible $L$-modules, hence irreducible $H$-modules.
		
Let us assume $U_1\ne U_3$; then $V = U_1\times U_3$, $U_1 = \cent V{L_3}$ and $U_3 = \cent V{L_1}$. Thus we get $L_1\cap L_3 =1$, whence $L = L_1\times L_3$ and  $VL = U_1L_1\times U_3L_3$. As $p_2$ and $p_3$ divide, respectively, the degree of a character of $U_1L_1$ and of $U_3L_3$, we get that \(p_2\) and \(p_3\) are adjacent in the graph $\Delta (VL)$, which
is a subgraph of $\Delta(G)$, a contradiction. 		
Thus we have $U_1=U_3$. Setting \(U=[E,P_1]\), we now have $E = U\times M$.
So, $[E,P_1]= [E,P_3]$ and $p_1\cdot p_3$ divides $|H:I_H(\lambda)|$ for every non-principal $\lambda \in {\rm Irr}(U)$. Observe also that $U$ is complemented in $G$ by $MH$. 
Let \(\chi\) be an irreducible character of \(G\) whose degree is divisible by \(p_1\). Since $P_1$ is an abelian normal Sylow subgroup of $H$, then $MP_1$ is an abelian normal  subgroup of $MH\simeq G/U$, and hence the kernel of \(\chi\) cannot contain \(U\). Therefore, denoting by \(\lambda\) a (non-principal) irreducible constituent of \(\chi_U\), the degree of \(\chi\) is divisible by \(|G:I_G(\lambda)|=|H:I_H(\lambda)|\), thus by \(p_1\cdot p_3\). As \(p_4\) is not adjacent to \(p_3\) in \(\Delta(G)\), we conclude that \(p_4\) does not divide \(\chi(1)\). But this holds for every \(\chi\in\irr G\) such that \(p_1\mid\chi(1)\), hence $\{p_1, p_4\} \in \cE(G)$, as desired.
\end{proof}

We are now ready to prove the following lemma.

\begin{lemma}
\label{non-abelian}
Let \(G\) be a group, and let \(\pi=\{p_0,...,p_{d-1}\}\) be a subset of \(\V G\) such that \(d\neq 1\) is odd. Assume that \(\fitg G=P\), where \(P\) is a Sylow $p_0$-subgroup of \(G\) such that \(P'\) is a minimal normal subgroup of \(G\).
Then $\pi$ is not the vertex set of a cycle in $\overline{\Delta}(G)$. 
\end{lemma}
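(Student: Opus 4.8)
The plan is to use the hypotheses to pin down the structure of $G$ essentially completely and then run a parity argument on the cycle. First I would set up the basic configuration. Since $\fit G=P$ is a normal Sylow $p_0$-subgroup, Schur--Zassenhaus provides a complement $H$, so $G=P\rtimes H$ with $H$ a $p_0'$-group; moreover $\fitg G=\fit G=P$ forces $\cent G P=\zent P\le P$, so $H$ acts faithfully on $P$. Writing $M=P'$, nilpotency of $P$ together with the minimality of $M$ gives $M\le\zent P$, whence $P$ has class at most two and $M$ is a nontrivial irreducible $H$-module. Finally, as $P/M$ is an abelian normal Sylow $p_0$-subgroup of $G/M$, Ito--Michler yields $p_0\notin\V{G/M}$, while $p_0\in\V G$ because $P$ is non-abelian; hence a degree of $G$ is divisible by $p_0$ precisely when the underlying character is nontrivial on $M$, and in particular every vertex of $\pi$ different from $p_0$ divides $|H|$.

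The key step is to convert the non-adjacencies at $p_0$ into an $\nq$-condition on the module $M$. If $s\neq p_0$ is a vertex of $\Delta(G)$ not adjacent to $p_0$, then $s$ divides no degree lying over a nontrivial $\mu\in\irr M$, since any such degree is a multiple of $p_0$. Because $|P|$ and $|I_H(\mu)|$ are coprime, one can choose an $I_H(\mu)$-invariant $\theta\in\irr{P}$ over $\mu$ and extend it to $I_G(\mu)=P\,I_H(\mu)$; Gallagher's theorem and the Clifford correspondence then translate the non-adjacency into $s\nmid|H:I_H(\mu)|$ together with $s\notin\V{I_H(\mu)}$ for every nontrivial $\mu$. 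This is exactly the assertion that $(H,\widehat M)$ satisfies $\nq$ with $q=s$ (after factoring out $\cent H M$). Applying this to the two cycle-neighbours $p_1,p_{d-1}$ of $p_0$ --- at least one of which is odd, and both of which are coprime to $|M|$ --- Proposition~\ref{Nq2} controls $\overline H=H/\cent H M$: its Fitting subgroup is cyclic, and if $\overline H$ is non-solvable then its solvable radical equals $\fit{\overline H}$.

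With this in hand I would treat the remaining vertices $p_1,\dots,p_{d-1}$ (all dividing $|H|$) by means of Lemma~\ref{new3.2}, applied to an elementary abelian section on which the complement acts faithfully (for instance $\widehat M\rtimes\overline H$, or the quotient $G/\frat P=(P/\frat P)\rtimes H$, using that a coprime action trivial on $P/\frat P$ is trivial on $P$). Its parts (d) and (e) are precisely the alternation and path-shortening tools that, relative to $\pi_0=\pi(\fit H)$, make $\overline{\Delta}$ behave bipartitely; this is the mechanism by which \cite{ACDKP} establishes bipartiteness in the solvable case. Seeding the argument with the information obtained at $p_0$ (whose neighbours are constrained by the previous paragraph), I would repeatedly apply (e) to collapse the hypothetical odd cycle two vertices at a time, keeping its length odd, until it degenerates to the triangle $\{p_0,p_1,p_{d-1}\}$ of $\overline{\Delta}(G)$. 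In this final configuration $p_1$ and $p_{d-1}$ are mutually non-adjacent and both non-adjacent to $p_0$; feeding it back into the $\nq$-analysis above, together with the fact that $\PSL{u^\alpha}$ in even characteristic and $J_1$ admit no module satisfying $\nq$ for an odd prime (Lemmas~\ref{Nq3} and~\ref{Nq4}, and Proposition~13 of \cite{C}), rules out the non-solvable possibilities for $\overline H$ and leaves a metacyclic $\overline H\le\Gamma(M)$, whose degree graph cannot realise this triangle --- the sought contradiction.

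The main obstacle is the non-abelianness of $P$. Since $M=P'$ lies inside $\frat P$ and is not complemented, a nontrivial $\mu\in\irr M$ does \emph{not} extend to $P$, so the clean ``complemented-module'' translation available in Lemma~\ref{specialcase} breaks down; the second paragraph must instead be routed through the class-two character theory of $P$ (the choice of an invariant $\theta$ over $\mu$ and its coprime extension) before an $\nq$-statement can be extracted. A secondary, bookkeeping difficulty is the special prime $p_0$: it divides $|P|$ and not $|H|$, so it falls outside the scope of Lemma~\ref{new3.2} and must be carried separately throughout --- both as the seed that anchors the alternation argument and as the vertex that, together with Proposition~\ref{Nq2} and Lemmas~\ref{Nq3}--\ref{Nq4}, closes the argument at the final triangle.
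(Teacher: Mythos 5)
Your outline has the right skeleton --- pass to the $p_0$-complement $H$, use the class-two character theory of $P$ (the $I_H(\mu)$-invariant $\theta$ over $\mu$ and its coprime extension) to turn non-adjacency at $p_0$ into normality and abelianness of Sylow subgroups inside the inertia groups $I_H(\mu)$, then invoke the alternation machinery of Lemma~\ref{new3.2}(d),(e) --- and this is indeed the paper's strategy in broad outline. But the proposal has a genuine gap at its centre: the alternation argument is never actually \emph{seeded}. Parts (d) and (e) of Lemma~\ref{new3.2} can only be started from a prime already known to lie in $\pi_0=\pi(\fit H)$, and nothing in your second paragraph produces such a prime. What your analysis at $p_0$ gives is that, for each neighbour $s\in\{p_1,p_{d-1}\}$ and each nontrivial $\mu\in\irr M$, the group $I_H(\mu)$ contains a normal abelian Sylow $s$-subgroup of $H$; this yields $(H/\cent HM,\widehat M)\in\mathcal{N}_s$ only when $s$ divides $|H:\cent HM|$, a hypothesis you assert implicitly but never verify. (When it fails, i.e.\ when $s$ divides $|\cent HM|$, the same normality forces a normal abelian Sylow $s$-subgroup of $\cent HM$, hence $s\in\pi_0$ --- a dichotomy that must be set up and exploited, not skipped.) Moreover, the conclusion of Proposition~\ref{Nq2} concerns $\fit{H/\cent HM}$, and says nothing about which of $p_1,\dots,p_{d-1}$ lie in $\pi(\fit H)$. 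The technical core of the paper's proof --- the analysis with $K=\cent HM$, $X/K=\fit{H/K}$, $Z=\zent K$ and $L=P_1^HZ$, culminating in the claim that \emph{one of $p_1,p_2$ lies in $\pi_0$} --- exists precisely to provide this seed, and without it Lemma~\ref{new3.2}(d),(e) never gets off the ground.

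Two further points. First, your toolkit is miscalibrated: since $H$ is a $p_0'$-group and $\widehat M$ is a $p_0$-group, the action here is \emph{coprime}, so whenever the $\mathcal{N}_s$-condition does hold, Theorem~\ref{coprimeNq}(a) gives $H/\cent HM\leq\Gamma(\widehat M)$, which is metabelian; this is what the paper uses. Proposition~\ref{Nq2} and Lemmas~\ref{Nq3},~\ref{Nq4} are designed for Lemma~\ref{specialcase}, where the action need not be coprime; importing them here both weakens the conclusion and directs your endgame at a difficulty (a non-solvable $\overline H$ satisfying $\mathcal{N}_s$) that cannot occur in the coprime setting. Second, the endgame itself is unsubstantiated: collapsing to the triangle $\{p_0,p_1,p_{d-1}\}$ does not finish the proof, because that triangle lives in $\overline{\Delta}(G)$, involves the prime $p_0\nmid |H|$, and its exclusion concerns \emph{all} of $\irr G$, including characters not trivial on $\frat P$ or on $\cent HM$; the remark that ``a metacyclic $\overline H\leq\Gamma(M)$ cannot realise it'' is not an argument, and proving it amounts to the $d=3$ case of the lemma. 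The paper dispatches that case at the outset by quoting the main theorem of \cite{DKP}: a triangle in $\overline{\Delta}(G)$ would force $\Oh{\pi'}G$ to contain a normal subgroup isomorphic to $\SL{u^{\alpha}}$ or $\PSL{u^{\alpha}}$, contradicting $\fitg G=\fit G=P$. You should do the same, and then (as in the paper) choose the cycle minimal, so that a \emph{single} application of Lemma~\ref{new3.2}(e) produces a shorter odd cycle and the desired contradiction, instead of a repeated collapse whose base case is left unproved.
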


\begin{proof}
For a proof by contradiction, assume that the elements of \(\pi\) are the vertices of a cycle in \({\overline{\Delta}}(G)\) (i.e., $p_i$ is  adjacent to $p_{i+1}$ for every $i\in \{0,\ldots, d-1\}$ and also $p_0$ is adjacent to $p_{d-1}$); replacing \(\pi\) with a suitable subset, we can assume that no proper subset of \(\pi\), having odd size larger than \(1\), is the set of vertices of a cycle in \({\overline{\Delta}}(G)\) as well. Let  $H$ be a complement of $P=\fitg G$ in $G$. Write $p = p_0$.

We can assume  \(d\geq 5\), by looking at the main result in \cite{DKP}. We  claim that one among \(p_1\) and \(p_2\) lies in \(\pi_0=\pi(\fit H)\), where $H$ is a $p$-complement of \(G\). Assuming that \(p_1\) is not in \(\pi_0\), we will show  \(p_2\in\pi_0\).
	
Setting \(M=P'\), let \(\widehat{M}\) denote the dual group \(\irr M\), and let \(\lambda\) be a non-trivial element in \(\widehat{M}\). For any \(\tau\in\irr{P\mid\lambda}\) we get \(I_H(\tau)\leq I_H(\lambda)\), because \(M\leq\zent P\) and so \(\tau_M\) is a multiple of \(\lambda\); but we know that,  by coprimality, there exists \(\theta\in\irr{P\mid\lambda}\) for which in fact equality holds. Now, \[\cd{G\mid\theta}=\{|H:I_H(\lambda)|\cdot\theta(1)\cdot\xi(1)\;:\;\xi\in\irr{I_G(\lambda)/P}\},\] thus, the nonadjacency between \(p\) and \(p_1\) forces \(I_H(\lambda)\simeq I_G(\lambda)/P\) to contain a unique abelian Sylow \(p_1\)-subgroup of \(H\). 

Set \(K=\cent H M\) and note that, for \(\lambda\in\widehat{M}\setminus\{1_M\}\), \(K\) is a normal subgroup of \(I_H(\lambda)\). Denoting by \(P_1\) the unique Sylow \(p_1\)-subgroup of \(H\) contained in \(I_H(\lambda)\), we have that \(P_1\cap K\) is a normal Sylow \(p_1\)-subgroup of \(K\), whence it lies in \(\fit H\); but we are assuming \(p_1\not\in\pi_0\), thus \(p_1\nmid |K|\) and \(P_1\leq\cent H K\). As then $p_1$ divides $|H/K|$, an application of Theorem~\ref{coprimeNq} yields that \(H/K\) can be identified with a subgroup of the semilinear group \(\Gamma(\widehat{M})\).  Finally, setting \(X/K=\fit{H/K}\), the prime \(p_1\) divides the order of the cyclic group \(|H/X|\); in fact, if we assume the contrary, then \(P_1K=P_1\times K\) is normal in \(H\), and therefore \(P_1\leq\fit H\) against our assumptions. Observe that the prime divisors of \(|H/X|\) constitute a clique in \(\Delta(G)\); in particular, \(p_2\nmid|H/X|\).
	
Assume now that \(p_2\) is a divisor of \(|X/K|\). Hence, if \(P_2\) is a Sylow \(p_2\)-subgroup of \(H\), we get \(K<P_2K\unlhd H\) and \([M,P_2K]=M\). It follows that, for every \(\lambda\in\widehat M\setminus\{1_M\}\), \(p_2\) divides \(|H:I_H(\lambda)|\). In particular, \(p_2\) divides the degree of every irreducible character of \(G\) whose kernel does not contain \(M\). Now, there exists \(\chi\in\irr G\) such that \(\{p,p_3\}\subseteq\pi(\chi(1))\), and the kernel of such a \(\chi\) clearly does not contain \(M\). As a consequence, \(p_2\cdot p_3\) divides \(\chi(1)\), a contradiction. Our conclusion so far is that \(P_2\leq K\).
	
Consider now the normal closure \(P_1^H\) of \(P_1\) in \(H\), set \(Z=\zent K\) and \(L=P_1^HZ\unlhd H\). Recalling that \(P_1\leq\cent H K\unlhd H\), we get \(L\leq\cent H K\) as well, and so \(K\cap L=Z\).   In other words, \(KL/Z=(K/Z)\times(L/Z)\). Observe that \(p_1\) lies in \(\V{L/Z}\), as otherwise we would have \(P_1Z=P_1\times Z\unlhd L\), whence \(P_1\leq\fit H\) against our assumptions. Now, the nonadjacency between \(p_1\) and \(p_2\) in \(\Delta(G)\) forces \(p_2\not\in \V{K/Z}\). Therefore we get \(P_2Z/Z\unlhd H/Z\). But \(P_2Z\) is nilpotent, and we conclude that \(P_2\leq \fit H\), as we desired. So  either $p_1$ or $p_2$ lies in $\pi_0$. 
	
Now, adopting the bar convention for \(\overline G=G/\frat G\), we have that \(\overline P\) is an elementary abelian \(p\)-group, and \({\overline H}\simeq H\) is a \(p'\)-group acting faithfully (by conjugation) on \(\overline P\). 

If \(p_1\in\pi_0\), then Lemma~\ref{alternanza}(e) yields that $\{p_1, p_4\}  \in \cE(G)$, so we would have a shorter cycle in $\overline{\Delta}(G)$, a contradiction. On the other hand, if \(p_1\) does not lie in \(\pi_0\), then we have seen that \(p_2\) does, and so does \(p_{d-1}\) by an iterated application of Lemma~\ref{alternanza}(d). But again Lemma~\ref{alternanza}(e) yields now  $\{p_{d-4}, p_{d-1}\} \in \cE(G)$, which again gives a contradiction.
\end{proof}

\section{Proof of Theorem~A}

In this section we provide a proof for Theorem~A, the main result of this paper. A crucial step in this direction will be to show the following: if the complement of the degree graph of a group \(G\) contains a cycle of odd length larger than \(1\), then \(G\) has a composition factor isomorphic to $\PSL{u^{\alpha}}$, where \(u^{\alpha}\) is a suitable prime power. We start by proving this intermediate statement as a separate lemma.

\begin{proposition}
\label{SectionPSL}
Let $G$ be a group, and $\pi$ a subset of $\V G$ such that $|\pi|>1$ is an odd number. If  $\pi$ is the set of vertices of a cycle in the complement graph ${\overline{\Delta}}(G)$, then there exist two subnormal  subgroups $H$ and $K$ of $G$, with $K\trianglelefteq H$, such that $H/K$ is isomorphic to $\PSL{u^{\alpha}}$ for a suitable prime $u$ and positive integer $\alpha$ with $u^{\alpha}\geq 4$. Moreover, we have $\pi\subseteq\pi(H/K)$.
\end{proposition}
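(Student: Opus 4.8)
The plan is to argue by induction on $|G|$, taking $G$ to be a counterexample of least order: thus $\pi$ (odd, $|\pi|>1$) is the vertex set of a cycle in $\overline{\Delta}(G)$, yet $G$ has no subnormal section isomorphic to some $\PSL{u^{\alpha}}$ ($u^{\alpha}\geq 4$) whose order is divisible by every prime in $\pi$. The first thing I would record is a monotonicity principle: for any $N\nor G$ one has $\cd{G/N}\subseteq\cd G$, so $\Delta(G/N)$ is a subgraph of $\Delta(G)$ and hence every non-edge of $\Delta(G)$ lying in $\V{G/N}$ is still a non-edge of $\Delta(G/N)$; the same holds for a normal subgroup $W\nor G$, since each irreducible character of $W$ lies under one of $G$ of degree a multiple of its own. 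Consequently the cycle $\pi$ survives in $\overline{\Delta}(G/N)$ (resp. $\overline{\Delta}(W)$) as soon as $\pi\subseteq\V{G/N}$ (resp. $\pi\subseteq\V W$). Together with the fact that preimages of subnormal subgroups are subnormal, this yields two reductions essentially for free: \emph{(i)} if a proper quotient $G/N$ satisfies $\pi\subseteq\V{G/N}$, minimality applied to $G/N$ produces the desired section and lifts it to $G$, a contradiction — so for every non-trivial $M\nor G$ some vertex of $\pi$ lies in $\V G\setminus\V{G/M}$; and \emph{(ii)} if a proper normal subgroup $W$ carries the cycle, minimality applied to $W$ finishes likewise. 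In particular the hypotheses of Lemma~\ref{AmazingReduction} are now at hand.

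I would then split according to whether $\fitg G=\fit G$. Suppose first that it does. By reduction (i) I pick a minimal normal $M\nor G$ and a prime $p\in\pi$ with $p\in\V G\setminus\V{G/M}$, and set $P\in\syl p G$. Since $p\in\pi$ and $\pi$ is a cycle (hence connected) in $\overline{\Delta}(G)$, all of $\pi$ lies in the connected component $\pi_p$ of $p$. Lemma~\ref{AmazingReduction} then furnishes $W\nor G$ with $\pi\subseteq\pi_p\subseteq\V W$ and $\fit W=\fitg W$ (as $W\nor G$ inherits from $G$ the absence of components) equal either to a Sylow $p$-subgroup $P$ with $P'=M$, or to $M$ itself with a complement. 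By reduction (ii) I may assume $W=G$, so that $M$ is genuinely minimal normal in $G$. In the first case $\fitg G=\fit G=P$ with $P'=M$ minimal normal, and Lemma~\ref{non-abelian} (with $p_0=p\in\pi$) declares that $\pi$ cannot be a cycle. In the second case $\fit G=\fitg G=M$ is minimal normal and complemented by some $H\cong G/M$; moreover $\pi\not\subseteq\V H=\V{G/M}$ \emph{precisely because} $p\in\pi\setminus\V{G/M}$, so Lemma~\ref{specialcase} again forbids the cycle. Either way the existence of $\pi$ is contradicted; hence $\fitg G\neq\fit G$, i.e. $G$ has a non-trivial layer.

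It remains to extract the $\PSL$-section from a component. Here I would realize a non-abelian composition factor $S$ of $G$ as a non-abelian minimal normal subgroup $N=S_1\times\cdots\times S_m$ of an appropriate section $\overline G$ of $G$, put $C=\cent{\overline G}{N}$, and study the almost-simple group $\overline G/C$ acting on $N$. Using the monotonicity principle and minimality I would reduce to $C$ trivial and $N$ the socle, so that $\overline G$ is almost simple with socle $N$; Proposition~\ref{CS}(c) then excludes $m\geq 2$ (it would make $\Delta(\overline G/C)$ complete, leaving $\overline{\Delta}$ edgeless and unable to host a cycle), so $N=S$ is simple. Proposition~\ref{W} shows that every prime dividing $|\overline G/S|$ is adjacent in $\Delta(\overline G)$ to all of $\pi(u^{2\alpha}-1)$, so such primes act as near-universal vertices and cannot sit on the cycle; thus $\pi\subseteq\pi(S)$. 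Finally, the description of the degree graphs of non-abelian simple groups (Propositions~\ref{PSL2} and~\ref{2.8} together with the tables of \cite{W}) forces $S\simeq\PSL{u^{\alpha}}$, this being the only family whose complement degree graph supports an odd cycle on three or more primes — the $\pi_{+}/\pi_{-}$ splitting of Proposition~\ref{PSL2} being exactly what an alternating odd cycle requires. Taking $H,K$ to be consecutive terms of a composition series of $G$ with $H/K\simeq S$ yields the required subnormal section with $\pi\subseteq\pi(H/K)$, contradicting the choice of $G$. The small case $|\pi|=3$ can be dispatched at the outset by invoking the main theorem of~\cite{DKP}.

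I expect the genuine difficulty to lie in this last paragraph: converting ``$G$ has a component'' into ``$G$ has a $\PSL{u^{\alpha}}$ composition factor carrying all of $\pi$''. The delicate points are the prime bookkeeping — checking that no vertex of the cycle is absorbed by the centralizer $C$ or by outer-automorphism primes, so that the \emph{entire} set $\pi$ divides $|S|$ — and the appeal to the classification-based description of simple-group degree graphs to rule out every simple group except $\PSL_2$ of a prime power. By contrast, the elimination of the case $\fitg G=\fit G$ is, modulo Lemmas~\ref{AmazingReduction},~\ref{specialcase} and~\ref{non-abelian}, essentially bookkeeping with the monotonicity principle and the careful choice of the dropping prime $p$ inside $\pi$.
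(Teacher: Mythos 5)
Your first half is sound and is essentially the paper's argument run in a different order: in a minimal counterexample your two monotonicity reductions are exactly what the paper uses, and under the assumption $\fitg G=\fit G$ your combination of Lemma~\ref{AmazingReduction} (with $W=G$ forced by reduction (ii)) with Lemmas~\ref{non-abelian} and~\ref{specialcase} reproduces the paper's Step (c) and final contradiction. The genuine gap is in your last paragraph, where ``$\lay G\neq 1$'' must be converted into an almost-simple section carrying all of $\pi$; there your use of minimality runs backwards. Your own reduction (i) says that in a minimal counterexample \emph{no} proper quotient retains all of $\pi$, so you cannot ``use the monotonicity principle and minimality to reduce to $C$ trivial'': before minimality can force $C=\cent GN=1$, you must first \emph{prove} that the cycle survives in $G/C$, i.e.\ that $\pi\subseteq\V{G/C}$. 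That is the substantive content of the paper's Step (a): for a non-abelian minimal normal subgroup $M$ with $C=\cent GM$, one shows the dropping prime $p$ lies in $\pi(G/C)$ and then proves, via Proposition~\ref{CS}(b), the key claim that every prime of $\pi(G/C)$ is adjacent in $\Delta(G)$ to every vertex outside $\pi(G/C)$; walking around the cycle gives $\pi\subseteq\pi(G/C)$, and only then do minimality (forcing $C=1$, then $G=M$) and White's tables apply. Your proposal has no substitute for this claim, and the same omission undercuts your appeal to Proposition~\ref{CS}(c): completeness of $\Delta(\overline G/C)$ contradicts nothing unless the cycle is already known to live among the vertices of $\Delta(\overline G/C)$.

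Two further concrete problems. First, your use of Proposition~\ref{W} to exclude outer-automorphism primes from the cycle is circular: that proposition presupposes that the socle is $\PSL{u^{\alpha}}$, which is precisely what you are trying to establish. The paper instead uses Proposition~\ref{2.8} at this stage: a cycle vertex $r$ with $r\nmid |S|$ has two cycle-neighbours non-adjacent to it, so $S$ would be of Lie type in two distinct characteristics, hence one of the exceptional-isomorphism groups, all of which have $|\out S|=2$, forcing the contradiction $r=2\mid |S|$. Second, you never address $Z=\zent{\lay G}\neq 1$: when the components are not simple, you cannot ``realize a composition factor as a non-abelian minimal normal subgroup of a section'' by passing to $G/Z$, because by reduction (i) the cycle does not survive in that quotient. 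The paper's Step (b) needs a dedicated argument here (the dropping prime makes a Sylow subgroup $P$ normal with $P\cap Z\neq 1$, and then a component $K$ with $P\cap \zent K\neq 1$ would decompose as $(P\cap\zent K)\times K_0$, contradicting $K=K'$). So your reorganization — eliminating the case $\fitg G=\fit G$ first and then attacking the layer — is viable in principle, but the component case, which you correctly identify as the genuine difficulty, is exactly the part that remains unproved.
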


\begin{proof}
Let $G$ be a counterexample to the statement, having minimal order. We proceed through a number of steps.

\medskip
\noindent{\bf{(a)}} {\sl{$G$ has no non-abelian minimal normal subgroups.}}

For a proof by contradiction, let us assume that $M$ is a non-abelian minimal normal subgroup of $G$. Observe that, by our minimality assumption on $G$, there exists $p\in\pi$ which is not in $\V {G/M}$. Moreover, setting $C=\cent G M$, we have $p\in\pi(G/C)$, as otherwise $C$ would contain a Sylow $p$-subgroup $P$ of $G$; but $PM/M$ is an abelian normal subgroup of $G/M$, hence $P$ (which now intersects $M$ trivially) would be an abelian normal subgroup of $G$, against the fact that $p$ is in $\V G$.

Next we claim that, for every $r\in\pi(G/C)$, the prime $r$ is adjacent in $\Delta(G)$ to every vertex not lying in $\pi(G/C)$. In fact, let $t$ be in $\V G\setminus\pi(G/C)$ and let $T$ be a Sylow $t$-subgroup of $G$, which lies in $C$. Since $T$ cannot be abelian and normal in $C$, there exists an irreducible character $\psi$ of $C$ whose degree is divisible by $t$. Also, by Proposition~\ref{CS}(b), we can choose $\phi\in\irr M$ such that $r$ is a divisor of $\chi(1)$ for every $\chi\in\irr{G\mid \phi}$. If we consider an irreducible character $\chi$ of $G$ lying over $\phi\times \psi\in\irr{M\times C}$, we get $rt\mid\chi(1)$, as wanted.

Now, If $M$ is not a simple group, then Proposition~\ref{CS}(c) yields that $p$ is adjacent in $\Delta(G)$ to every element of $\pi(G/C)$ (note that, as $\fit{G/C}$ is trivial, we have $\V{G/C}=\pi(G/C)$). Therefore, as $p$ lies in $\pi(G/C)$ and in view of the previous paragraph, $p$ is in fact a complete vertex of $\Delta(G)$, clearly against our assumptions. As a consequence, $M$ must be simple, and ${\overline G}=G/C$ is an almost simple group with socle ${\overline M}$.

Recall that, by the hypothesis, there exist $q,s$ in $\pi$ that are not adjacent to $p$ in $\Delta(G)$ and, again by the claim proved in the second paragraph, both $q$ and $s$ are divisors of ${\overline G}$; it is then easy to verify that an iterated application of that claim yields $\pi\subseteq\pi({\overline G})$, and our minimality assumption forces now \(C=1\). Furthermore, assume that an element $r\in\pi$ does not divide $|M|$, and consider two elements $r^{-}, r^{+}$ in $\pi$ that are not adjacent to $r$ in $\Delta(G)$; then, by \cite[Proposition 2.8(b)]{DKP}, $M$ is a group of Lie type both in characteristic $r^{-}$ and $r^{+}$. On the other hand, for every such group $S$ (recall that here $S$ is isomorphic to one among $\PSL 4\simeq \PSL 5$, $\PSL 7\simeq {\rm{PSL}}_3(2)$, and ${\rm{PSU}}_4(2)\simeq{\rm{PSp}}_4(3)$), 
we have $|\out S|=2$, whence $r=2$ divides \(|M|\), a contradiction. The conclusion so far is that $\pi\subseteq{\pi(M)}$, and our minimality assumption on $G$ yields that $G=M$ is a simple group. But a recognition of the graphs $\Delta(S)$ where $S$ is a non-abelian simple group (see~\cite{W}) shows that the hypothesis of this theorem is satisfied if and only if $M$ is isomorphic to $\PSL{u^{\alpha}}$ for some prime $u$ and positive integer $\alpha$. In other words, $G$ is not a counterexample, and this is the final contradiction which completes the proof of Step (a).

\medskip
\noindent{\bf{(b)}} {\sl{We have $\fitg G=\fit G$.}}

Let $\lay G$ denote (as customary) the subgroup generated by the components of $G$ and, setting $Z=\zent{\lay G}$, assume $Z\neq 1$. By the minimality of $G$, there exists $p\in\pi$ such that $p$ is not a vertex of $\Delta(G/Z)$; as a consequence, if $P\in\syl p G$, then $PZ$ is a normal subgroup of $G$ and $PZ/Z$ is abelian. Since $PZ$ is solvable, it centralizes $\lay G$, and therefore $Z$ is central in $PZ$. We conclude that $P$ is normal in $G$, and now we must have $P\cap Z\neq 1$, as otherwise $P\simeq PZ/Z$ would be an abelian normal Sylow $p$-subgroup of $G$. Taking into account that $Z$ is the product of the centres of all the components of $G$, we deduce that for some component $K$ of $G$ we have $P\cap\zent K\neq 1$. On the other hand, as $P$ is normal in $G$ and $K/\zent K$ is a simple group, $p$ cannot be a divisor of $|K/\zent K|$; therefore we get $K=(P\cap\zent K)\times K_0$ (where $K_0$ is a $p$-complement of $K$), against the fact that $K=K'$. This contradiction shows that in fact $Z$=1, whence $\lay G$ is a product of non-abelian minimal normal subgroups of $G$. Step (a) implies of course $\lay G=1$, and the claim follows.

\medskip
Let $M$ be a minimal normal subgroup of $G$. We know that there exists $p\in\pi\setminus\V{G/M}$, and we consider a Sylow $p$-subgroup $P$ of $G$. Then the following holds.

\medskip
\noindent{\bf{(c)}} {\sl{Either $\fit G=M$ with $M\cap\frat G=1$, or $\fit G=P$ with $P'=M$.}}

This follows by the minimality of $G$, together with Lemma~\ref{AmazingReduction}. 

\medskip The final contradiction which concludes this proof is now achieved by means of Lemma~\ref{specialcase} and Lemma~\ref{non-abelian}, which exclude respectively the first and the second possibility of (c).
\end{proof}

Another important intermediate step in our proof of Theorem~A is provided by the following lemma.

\begin{lemma}
\label{newirreduciblemodule}
Let \(G\) be a group. Assume that \(K\) is an abelian minimal normal subgroup of \(G\) such that \(G/K\) is isomorphic either to \(\PSL{u^{\alpha}}\) or to \(\SL{u^{\alpha}}\) for a suitable prime $u$ and positive integer $\alpha$ with $u^{\alpha}\geq 4$. If \(K\) does not lie in \(\zent G\), then there exists \(\chi\in\irr G\) such that one of the following conclusions hold.
\begin{enumeratei}
\item \(\chi(1)\) is divisible by \(u\), and by either all the odd primes in \(\pi(u^{\alpha}+1)\) or all the odd primes in \(\pi(u^{\alpha}-1)\).
\item  \(\chi(1)\) is divisible by all the primes in \(\pi(u^{2\alpha}-1)\).
\end{enumeratei}
\end{lemma}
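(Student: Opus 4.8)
The plan is to pass to the faithful action of a simple-type group on $K$ and then extract from a well-chosen orbit of nonprincipal characters of $K$, via Clifford theory, a character of $G$ of the required degree. Set $C=\cent G K$ and $q=u^{\alpha}$. Since $K$ is abelian we have $K\le C\nor G$, so $C/K$ is a normal subgroup of $G/K$. As $G/K$ is isomorphic to $\PSL{q}$ (simple, since $q\ge 4$) or to $\SL{q}$ (whose only proper nontrivial normal subgroup is its centre, of order at most $2$), and since $K\not\le\zent G$ rules out $C=G$, I obtain that $\bar G:=G/C$ is isomorphic to $\PSL{q}$ or $\SL{q}$ and acts faithfully and irreducibly on $K$, hence also on the dual module $\widehat K=\irr K$. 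Here $K$ is an elementary abelian $r$-group, and I will treat separately the defining case $r=u$ and the cross-characteristic case $r\ne u$.

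Next comes the Clifford-theoretic reduction. For a nonprincipal $\lambda\in\irr K$ the $G$-orbit of $\lambda$ coincides with its $\bar G$-orbit (as $C$ centralizes $K$), so every $\chi\in\irr{G\mid\lambda}$ has degree divisible by the orbit length $|\bar G:\bar G_\lambda|$, where $\bar G_\lambda$ is the stabilizer of $\lambda$ in $\bar G$. Thus it suffices to produce a nonprincipal $\lambda$ whose orbit length is divisible by the primes required in (a) or in (b). The crucial observation is that the two alternatives of the statement correspond to two families of ``small'' stabilizers: if $\bar G_\lambda$ is a $u$-group, then its index is a multiple of the $u'$-part $q^2-1$ (resp.\ $(q^2-1)/2$) of $|\bar G|$, and one checks that in either case all primes of $\pi(u^{2\alpha}-1)$ divide that index, giving conclusion (b); whereas if $\bar G_\lambda$ lies in the normalizer of a maximal torus of order $q+1$ (resp.\ $q-1$), its index is divisible by $u$ and by every odd prime of $u^{\alpha}-1$ (resp.\ $u^{\alpha}+1$), giving conclusion (a). An auxiliary remark that I would use repeatedly is that if an element $x$ of prime order $\ell\ne r$ acts without nonzero fixed points on $\widehat K$, then $\ell$ divides the length of every nontrivial $\bar G$-orbit, since each $\langle x\rangle$-orbit has size exactly $\ell$.

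The heart of the argument is then the module analysis. Using the classification of the faithful irreducible $\bar G$-modules together with the subgroup structure of $\PSL{q}$ and $\SL{q}$, I would show that $\widehat K$ always possesses a nonprincipal vector of one of the two favourable kinds above. In defining characteristic this is governed by highest weights and by the conjugation action on adjoint-type modules: the natural module gives a transitive action on nonzero vectors with unipotent stabilizer (case (b)), while a regular semisimple vector has a maximal torus as stabilizer (case (a)). In cross characteristic one argues from the restriction of the Brauer character of $\widehat K$ to the split and nonsplit tori, locating either a torus element acting fixed-point-freely (which forces the required divisibility in every orbit by the remark above) or a vector with torus- or torus-normalizer stabilizer.

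The main obstacle is precisely this last step: controlling the vector stabilizers for every faithful irreducible module, uniformly across the two characteristics. The delicate points are (i) that the prime $u$ of conclusion (a) can only be produced by the orbit length itself and never by the characters of the inertia factor, whose degrees are $u'$-numbers for Borel- and torus-type stabilizers, so one must guarantee a stabilizer avoiding a full Sylow $u$-subgroup of $\bar G$; (ii) the small exceptional values of $q$ for which $u^{2\alpha}-1$ admits no primitive prime divisor, to be handled by hand; and (iii) the bookkeeping between $\SL{q}$ and $\PSL{q}$, including the behaviour of the central involution when $u$ is odd.
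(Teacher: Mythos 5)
There is a genuine gap, and it lies exactly where you place ``the heart of the argument''. Your Clifford reduction is correct as far as it goes: every \(\chi\in\irr{G\mid\lambda}\) has degree divisible by the orbit length \(|\bar G:\bar G_\lambda|\). But your plan then requires proving that \emph{every} faithful irreducible \(\bar G\)-module over \emph{every} prime field contains a nonprincipal character whose stabilizer is of one of your two favourable types (a \(u\)-group, or inside the normalizer of a maximal torus). This existence statement is never established; it is only described as something you ``would show'' via highest weights in defining characteristic and Brauer character restrictions in cross characteristic. That is not a routine verification: stabilizers of Borel type (containing a full unipotent radical and a nontrivial torus part) and of subfield type \(\PSL{u^\beta}\), \({\rm PGL}_2(u^\beta)\) genuinely occur (for instance, the heart of the permutation module on the projective line has vectors with full Borel stabilizer), and for such vectors the orbit length is divisible neither by \(u\) nor by all primes of \(u^{2\alpha}-1\). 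Your proposal does not rule out the possibility that for some module \emph{all} nonzero vectors have stabilizers of these bad types, nor does it carry out the module analysis that would do so. The exceptional stabilizers \(A_4\), \(S_4\), \(A_5\) and the subfield stabilizers are, moreover, absent from your stabilizer-to-conclusion dictionary altogether.

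The deeper problem is that your obstacle (i) is a false constraint, and it is what forces you into this unproved module-theoretic claim. You assert that the prime \(u\) ``can only be produced by the orbit length itself and never by the characters of the inertia factor''. This conflates \(\irr{I_G(\lambda)/K}\) with \(\irr{I_G(\lambda)\mid\lambda}\): these have the same degrees only when \(\lambda\) extends to \(I_G(\lambda)\). The paper's proof of precisely the Borel case works the other way around: it first shows (via the generalized quaternion Sylow \(2\)-subgroups of \(\SL{u^\alpha}\)) that non-extendability can occur only when \(Z=K\), and then, when the stabilizer is of Borel type, uses \cite[(11.31)]{Is} to show that \(\lambda\) does not extend to the Sylow \(u\)-subgroup of \(I_G(\lambda)\), so that \emph{every} \(\beta\in\irr{I_G(\lambda)\mid\lambda}\) has degree divisible by \(u\); conclusion (a) then follows with \(u\) supplied by the inertia characters, not the orbit. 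Similarly, for subfield stabilizers the missing primes of \(u^\alpha\pm1\) are recovered from faithful characters of the Schur cover \(\SL{u^\beta}\) of degree \(u^\beta\pm1\), again via character degrees over \(\lambda\) rather than orbit lengths. By passing to the faithful action of \(\bar G=G/C\) on \(\widehat K\) you discard the extension-theoretic data of \(G\) on which this mechanism depends, so your approach must prove a strictly stronger statement than the lemma — one the paper never needs and whose truth your proposal leaves open. In short: the reduction is sound, but the surviving task is the hard part, your plan for it is incomplete, and the constraint you impose on yourself in (i) makes it harder than the problem actually is.
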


\begin{proof}
 
Set \(Z/K=\zent{G/K}\) (so, \(|Z/K|\leq 2\)). We start by observing that, in the case when \(Z/K\) is non-trivial, i.e., when \(G/K\) is isomorphic to \(\SL{u^{\alpha}}\) with \(u\neq 2\), every irreducible character of \(K\) has an extension to its inertia subgroup in \(G\). In fact, this is certainly true if \(K\) has a complement in \(G\). On the other hand, if \(K\) is not complemented in \(G\), then  $K \leq \frat G$, so \(Z\) is nilpotent; now, if \(K\) is not a \(2\)-group, then \(Z=K\times Z_0\) where \(Z_0\) is a normal subgroup of order \(2\) of \(G\), and we conclude the proof applying induction to the factor group \(G/Z_0\). Thus, to prove of our claim, we can assume that \(K\) is a $2$-group. Recall that $G/K\simeq\SL{u^a}$ contains just one involution and that a Sylow $2$-subgroup \(Q\) of \(G/K\) is a generalized quaternion group. An application of \cite[V.25.3]{Hu} yields that the Schur representation group of \(Q\) is \(Q\) itself; as a consequence, for each 
$\lambda \in \irr K$, the group $I_G(\lambda)/\ker{\lambda}$ splits over its central subgroup $K/\ker{\lambda}$, and again  $\lambda$ extends to  $I_G(\lambda)$, as claimed. 

Next, as we are assuming \(K\not\leq \zent G\), there exists \(\lambda\in\irr K\) such that \(I_G(\lambda)\) is a proper subgroup of \(G\); observe that, setting  \(I=I_G(\lambda)Z\), this \(I\) is a proper subgroup of \(G\) as well, because \(G/K\) is perfect. As a consequence, $I/Z$ is isomorphic to a proper subgroup of $\PSL{u^{\alpha}}$. Recall that the subgroups of $\PSL{u^{\alpha}}$ are of the following types (see~\cite[II.8.27]{Hu}), where $d = (2, u^{\alpha} -1)$:
    (i) dihedral groups of order $(2(u^{\alpha} \pm 1))/d$ and their subgroups;
    (ii) semidirect products of elementary abelian groups of order $u^{\alpha}$ by cyclic groups of order $(u^{\alpha} -1)/d$ and
    their subgroups;
    (iii) $A_4$ (unless $u=2$ and $\alpha$ is odd); $S_4$ (when $u^{2\alpha} \equiv 1 \pmod{16}$);
    $A_5$ (when $u^{\alpha}(u^{2{\alpha}} - 1) \equiv 0 \pmod 5$);
    (iv) $\PSL{u^{\beta}}$ or ${\rm PGL}(u^{\beta})$, where $\beta$ divides $\alpha$ (for $u \neq 2$, $\PSL{u^{\alpha}}$ has a subgroup
    isomorphic to ${\rm PGL}(u^{\beta})$ if and only if $\alpha/\beta$ is even).  

If \(I/Z\) is of type (i), then \(\pi(|G:I|)\) contains either \(\{u\}\cup\pi(u^{\alpha}+1)\) or \(\{u\}\cup\pi(u^{\alpha}-1)\), and conclusion (a) of our statement is satisfied by any \(\chi\in\irr{G\mid\lambda}\). 

As for type (iii), if \(I/Z\simeq A_4\) or \(I/Z\simeq S_4\), then the same argument as in the paragraph above yields conclusion (a), except possibly when \(u=2\) and \(u\) does not divide \(|G:I|\). 

In the case \(I/Z\simeq S_4\), this means \(G/K\simeq\PSL{2^3}\), but \(\PSL {2^3}\) does not have any subgroup isomorphic to \(S_4\). On the other hand, 
if \(I/Z\simeq A_4\), then we have to consider the case \(G/Z\simeq\PSL{2^{2}}\); observe that here, since \(u=2\), we have \(Z=K\) and \(I=I_G(\lambda)\), and we have \(\pi(u^{2\alpha}-1)=\{3,5\}\). Now, the Schur representation group of \(I/Z\) is a \(\{2,3\}\)-group. If \(\irr{I\mid\lambda}\) contains a linear character, then Gallagher's Theorem yields that there exists \(\psi\in\irr{I\mid\lambda}\) such that \(\psi(1)=3\); moreover, any non-linear character in \(\irr{I\mid\lambda}\) has a degree divisible by \(2\) or \(3\). In any case, 
there exists \(\chi\in\irr{G\mid\lambda}\) whose degree is divisible by either \(2\cdot 5\) or by \(3\cdot 5\), and conclusion (a) (resp. (b)) is satisfied.

In order to rule out type (iii), it remains to consider the case \(I/Z\simeq A_5\). In this situation, certainly \(u\) divides \(|G:I|\), and conclusion (a) holds unless possibly when the \(3\)-part of \(u^{\alpha}+1\) or \(u^{\alpha}-1\) is precisely \(3\). Therefore, as can be easily seen,  it is enough to show that \(\irr{I_G(\lambda)\mid\lambda}\) contains a character of degree divisible by~\(3\); to this end, we can assume that \(\lambda\) does not extend to \(I_G(\lambda)\), so (by the first paragraph of this proof) we are dealing with a situation in which \(Z=K\) and \(I=I_G(\lambda)\). Set \(K_0=\ker{\lambda}\), and observe that \(I/K_0\) does not split over \(K/K_0\), as otherwise \(\lambda\) would have an extension to \(I\); moreover, \(K/K_0\) is central in \(I/K_0\). As a consequence, we get \(I/K_0\simeq \SL{5}\). Now, the characters in \(\irr{I\mid\lambda}\) are precisely the faithful characters of \(I/K_0\); among those, there is a character of degree \(6\), as wanted.
      
Consider now type (iv): \(I/Z\simeq\PSL{u^{\beta}}\) or ${\rm PGL}_2(u^{\beta})$, where \(\beta\) divides \(\alpha\) (note that we already considered the case \(u^{\beta}\in\{4,5\}\), i.e., \(I/Z\simeq A_5\)). Clearly we have that \(u\) divides \(|G:I|\).
If $u^{\beta} = 9$, then $I/Z \simeq \PSL{9} \simeq A_6$ or $I/Z\simeq{\rm PGL}_2(9)$, and we have \(u=3\); but since only one odd prime (namely \(5\)) appears in \(\pi(3^2+1)\cup\pi(3^2-1)\), certainly conclusion (a) is satisfied in this case. 
So we can assume \(u^{\beta}\neq 9\). Now, \(I_G(\lambda)/K\) has a normal section isomorphic to \(\PSL{u^{\beta}}\), so it has irreducible characters whose degrees are divisible by \(u^{\beta}+1\) or by \(u^{\beta}-1\). If \(\lambda\) has an extension to \(I_G(\lambda)\), then conclusion (a) easily follows by Gallagher's Theorem. As a consequence, again we can assume \(Z=K\).
Let $I_0$ be the normal subgroup of $I=I_G(\lambda)$ (with $|I:I_0| \in \{1,2\}$) such that
$I_0/Z \simeq \PSL{u^{\beta}}$ and let $K_0 = \ker{\lambda}$. If $I_0/K_0$ does not split over $K/K_0$, then
$K/K_0 \leq (I_0/K_0)'$ and, as $K/K_0 \leq \zent{I_0/K_0}$ as well,
we deduce that $I_0/K_0 \simeq \SL{u^{\beta}}$, the representation group of
$I_0/K$. But $\SL{u^{\beta}}$ has faithful characters of degree $u^{\beta}+1$ (as well as
$u^{\beta} -1$), and hence there exists $\theta \in \irr{I_0|\lambda}$ such that
$u^{\beta}+1$ (or \(u^{\beta}-1\)) divides $\theta(1)$. If $I_0/K_0$ splits over $K/K_0$, then one gets the
same conclusion by Gallagher's Theorem, and also in this case conclusion (a) follows. 

Finally, assume that $I/Z$ is of type (ii), so \(u^{\alpha}+1\) divides \(|G:I|\). Let us assume that conclusion (b) is not satisfied; then there exists \(s\in\pi(u^{\alpha}-1)\) such that \(s\) does not divide \(|G:I|\). Now, $I_G(\lambda)/K$ has a normal section isomorphic to \(I/Z\), which is a Frobenius group whose kernel is an elementary abelian \(u\)-group, and whose complements have order necessarily divisible by \(s\) (note that we can assume the Frobenius kernel to have order divisible by \(u^{\alpha}\), as otherwise conclusion (a) holds). 
Since $I/Z$ has irreducible characters of degree divisible by $s$, we get that  $\lambda$  
does not extend to $I_G(\lambda)$, so \(Z=K\) and \(I_G(\lambda)=I\). Therefore, as every Sylow subgroup of $I/K$ other than the Sylow \(u\)-subgroup $U/K$ is cyclic, 
\cite[(11.31)]{Is} yields that $\lambda$ does not extend to $U$. 
As a consequence, if $\beta \in \irr{I\mid\lambda}$, then $u$ divides $\beta(1)$ and 
conclusion (a) follows.
\end{proof}

We are now in a position to conclude the proof of the theorem, which we state again.

\begin{ThmA}
Let \(G\) be a group, and  let $\pi \subseteq \V G$ be such that $|\pi|$ is an odd number larger than \(1\). Then $\pi$ is the set of vertices of a cycle in ${\overline{\Delta}}(G)$ if and only if
  $\Oh{\pi'}G = S \times A$, where  $A$ is abelian, $S \simeq \SL{u^{\alpha}}$ or $ S \simeq \PSL{u^{\alpha}}$ for a  prime $u \in \pi$ and a positive integer $\alpha$, and the primes in \(\pi\setminus\{u\}\) are alternately odd divisors of $u^{\alpha}+1$ and $u^{\alpha} -1$. 
\end{ThmA}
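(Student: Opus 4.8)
For the ``if'' direction the plan is to transfer the whole problem into the normal subgroup \(N=\Oh{\pi'}G=S\times A\). Since \(|G:N|\) is a \(\pi'\)-number and, for every \(\chi\in\irr G\) lying over a constituent \(\theta\in\irr N\) of \(\chi_N\), the ratio \(\chi(1)/\theta(1)\) divides \(|G:N|\), the \(\pi\)-parts of \(\chi(1)\) and of \(\theta(1)\) coincide. Hence two primes of \(\pi\) are adjacent in \(\Delta(G)\) exactly when they are adjacent in \(\Delta(N)\), and as \(A\) is abelian we have \(\Delta(N)=\Delta(S)\). So it is enough to realise \(\pi\) as a cycle in \(\overline{\Delta}(S)\). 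Writing \(\pi_+=\pi(u^{\alpha}+1)\) and \(\pi_-=\pi(u^{\alpha}-1)\), Proposition~\ref{PSL2} gives that \(u\) is non-adjacent in \(\Delta(S)\) to every other vertex and that each odd prime of \(\pi_+\) is non-adjacent to each odd prime of \(\pi_-\); listing \(\pi\setminus\{u\}\) in the prescribed alternating order \(a_1,b_1,\dots,a_k,b_k\) (with \(a_i\in\pi_+\), \(b_i\in\pi_-\), \(|\pi|=2k+1\)) and inserting \(u\) between \(b_k\) and \(a_1\) then yields a cycle on \(\pi\) whose edges all lie in \(\overline{\Delta}(S)\subseteq\overline{\Delta}(G)\).

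For the ``only if'' direction I would argue by induction on \(|G|\), taking \(G\) to be a minimal counterexample. The divisibility remark above shows that \(\pi\subseteq\V{\Oh{\pi'}G}\) and that \(\pi\) is still a cycle in \(\overline{\Delta}(\Oh{\pi'}G)\), while \(\Oh{\pi'}{}\) of this subgroup is the subgroup itself; by minimality I may therefore assume \(G=\Oh{\pi'}G\). Proposition~\ref{SectionPSL} supplies a subnormal section isomorphic to \(\PSL{u^{\alpha}}\), and since outer automorphism groups of simple groups are solvable this non-abelian composition factor is carried by a component, so \(G\) has a subnormal quasisimple \(S_0\) with \(S_0/\zent{S_0}\simeq\PSL{u^{\alpha}}\). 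The first structural step is to force \(S_0\nor G\): two or more \(G\)-conjugates of \(S_0\) would, modulo the centre of the layer, produce a non-abelian non-simple minimal normal subgroup, and Proposition~\ref{CS}(c) would then make the degree graph of the relevant quotient---a subgraph of \(\Delta(G)\) containing \(\pi\)---complete, which an odd cycle forbids. Setting \(C=\cent G{S_0}\), the group \(\bar G=G/C\) is thus almost simple with socle \(\simeq\PSL{u^{\alpha}}\).

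Next I would eliminate outer contributions. By Proposition~\ref{W} every prime dividing \(|G:S_0C|\) is adjacent in \(\Delta(G)\) to all of \(\pi(u^{2\alpha}-1)=\pi_+\cup\pi_-\); as \(\pi\subseteq\pi(\PSL{u^{\alpha}})=\{u\}\cup\pi_+\cup\pi_-\), such a prime cannot sit in the cycle, so \(|G:S_0C|\) is a \(\pi'\)-number and \(G=\Oh{\pi'}G\) gives \(G=S_0\ast C\) with \(\bar G\simeq\PSL{u^{\alpha}}\). Using \(\Delta(\PSL{u^{\alpha}})=\Delta(\bar G)\subseteq\Delta(G)\) together with Proposition~\ref{PSL2}, the two \(\overline{\Delta}(G)\)-neighbours of any vertex in \(\pi_+\) (resp. \(\pi_-\)) must lie in \(\pi_-\cup\{u\}\) (resp. \(\pi_+\cup\{u\}\)), and \(2\notin\pi\) when \(u\) is odd; hence \(\pi\setminus\{u\}\) consists of odd primes that alternate along the cycle between divisors of \(u^{\alpha}+1\) and of \(u^{\alpha}-1\), with \(|\pi_+\cap\pi|=|\pi_-\cap\pi|=(|\pi|-1)/2\). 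Finally I must show \(C\) is abelian and central in \(S_0\), so that \(\Oh{\pi'}G=S\times A\) with \(A\) abelian and \(S\in\{\SL{u^{\alpha}},\PSL{u^{\alpha}}\}\) according to whether \(A\) absorbs \(\zent{S_0}\). Here Lemma~\ref{newirreduciblemodule} is the engine: whenever \(\PSL{u^{\alpha}}\) or \(\SL{u^{\alpha}}\) arises as a section modulo an abelian minimal normal subgroup on which it acts non-trivially, it produces a character whose degree is divisible by \(u\) and an entire side \(\pi_\pm\) (conclusion (a)), or by all of \(\pi(u^{2\alpha}-1)\) (conclusion (b)); by the counts just obtained these give cliques of size \((|\pi|+1)/2\) and \(|\pi|-1\) inside \(\pi\), both exceeding the independence number \((|\pi|-1)/2\) of the odd cycle---a contradiction. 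This kills non-central modules, and a companion argument (applying induction to \(G/L\) for a putative further component \(L\), exploiting \(G=\Oh{\pi'}G\)) kills extra components, forcing \(C\) abelian with \(C\cap S_0=\zent{S_0}\).

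I expect the last stage to be the main obstacle: the rigidity statement that, beyond the single \(\SL{u^{\alpha}}\)- or \(\PSL{u^{\alpha}}\)-component, everything in \(\Oh{\pi'}G\) is abelian and splits off as a direct factor. This is where Lemma~\ref{newirreduciblemodule} has to be coupled carefully with the exact degree-graph information of Propositions~\ref{PSL2} and~\ref{W} and with the clique-versus-independence bound for odd cycles, and where the exceptional Schur multipliers and the \(\SL\)-versus-\(\PSL\) dichotomy demand separate bookkeeping.
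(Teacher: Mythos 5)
Your ``if'' direction and much of your outline (removing outer primes via Proposition~\ref{W}, the alternation of primes via Proposition~\ref{PSL2}, and playing Lemma~\ref{newirreduciblemodule} against the independence number of an odd cycle) run parallel to the paper, but there is a genuine gap at the pivotal step. You claim that, because outer automorphism groups of simple groups are solvable, the composition factor $\PSL{u^{\alpha}}$ supplied by Proposition~\ref{SectionPSL} ``is carried by a component'', i.e.\ that $G$ has a subnormal quasisimple subgroup $S_0$ with $S_0/\zent{S_0}\simeq\PSL{u^{\alpha}}$. This inference is false in general: a group can have a non-abelian composition factor and trivial layer. For instance, if $V$ is a faithful irreducible module for $\PSL{u^{\alpha}}$ over a finite field and $G=V\rtimes \PSL{u^{\alpha}}$, then $G$ has the required subnormal section (take $H=G$, $K=V$), yet $\fitg G=\fit G=V$ and $\lay G=1$, so no such $S_0$ exists; the Schreier conjecture is irrelevant here, because the simple factor arises from the action on $\fit G$ (inside ${\rm Aut}(V)$, a general linear group), not from automorphisms of a simple group. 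Ruling out precisely this ``affine'' configuration --- a $\PSL{u^{\alpha}}$-section sitting on top of normal subgroups on which it acts non-trivially --- is the actual content of the hard direction, and everything you build afterwards (the normality of $S_0$, the central product $G=S_0C$ with $C=\cent G{S_0}$, the almost-simple quotient) rests on it. In effect the argument is circular: the existence of such a component is essentially equivalent to the conclusion $\Oh{\pi'}G=S\times A$ that you are trying to prove.

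The paper never presupposes a component. It works directly with the subnormal pair $K\nor H$ given by Proposition~\ref{SectionPSL}: it first shows that any chief series of $G$ has exactly one non-abelian chief factor of order divisible by a prime of $\pi$ (so $K$ and $G/H$ are $\pi$-solvable), then proves $G=H\cent G{H/K}$ and reduces, by induction, to the case where $G=H$ is perfect and $K$ is minimal normal; at that point $K$ is shown to be abelian, and Lemma~\ref{newirreduciblemodule} --- applied along the chief series, not merely to a centralizer after a component has been posited --- forces $K\leq\zent G$, whence $G\simeq\SL{u^{\alpha}}$. So your instinct that Lemma~\ref{newirreduciblemodule} ``kills non-central modules'' is exactly right, but it must be the engine of the reduction from the start, not a final cleanup step; as written, the case it is needed for most is the one your component claim has silently excluded. (The remaining bookkeeping you flag --- splitting the central product into a genuine direct product $S\times A$ and the $\SL{}$/$\PSL{}$ dichotomy --- is a lesser issue and is indeed resolvable using the standing hypothesis $G=\Oh{\pi'}G$, as the paper does in its reduction to $G=H$.)
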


\begin{proof}
We first observe that, by~\cite[Corollary (11.29)]{Is}, the $\pi$-parts of the irreducible character degrees of $G$ and of $\Oh{\pi'}G$ are the same. Thus, we can henceforth assume that $G = \Oh{\pi'}G$; as the ``if part" of our claim easily follows by the structure of \(\Delta(\PSL{u^{\alpha}})\) for \(u^{\alpha}>3\) (see Proposition~\ref{PSL2}), we will focus on the ``only if part". 

 
By Proposition~\ref{SectionPSL}, we know that \(G\) has a composition factor \(H/K\) (which is indeed, clearly, a chief factor) such that \(H/K\simeq\PSL{u^{\alpha}}$ for a suitable prime power \(u^{\alpha}>3\), with \(\pi\subseteq\pi(H/K)\). Note that, in view of the structure of \(\Delta(H/K)\), our assumptions imply that \(u\) lies in \(\pi\) and, if \(q,s\) are the two ``neighbors" of \(u\) in \(\pi\), then (say) \(q\) divides \(u^{\alpha}+1\), \(s\) divides \(u^{\alpha}-1\), and \(q,s\) are both odd  (in fact, if \(u\neq 2\), then all the primes in \(\pi\) are necessarily odd); moreover, the primes in \(\pi\setminus\{u\}\) along the cycle are alternately odd divisors of $u^{\alpha}+1$ and $u^{\alpha} -1$, so the last claim is already proved.

As a consequence, \(G\) does not have any irreducible character \(\chi\) as in conclusion~ (a) or conclusion (b) of Lemma~\ref{newirreduciblemodule}. 

Our next claim is that every chief series of \(G\) has precisely one non-abelian chief factor whose order is divisible by some prime in \(\pi\); as a consequence, we will get that both \(K\) and \(G/H\) are \(\pi\)-solvable groups.

 Arguing by contradiction assume that, in a chief series containing $H$ and $K$, there is  another non-abelian chief factor $U/V$ of $G$ with $\pi \cap \pi(U/V) \neq \emptyset$, and let \(t\) be a prime in $\pi \cap \pi(U/V)$.
If $H \leq V$, then $\cent{G/K}{H/K}$ has a normal section isomorphic to $U/V$ (since ${\rm Out}(H/K)$ is solvable) and hence we get an edge between \(t\) and any primes in $\pi\setminus\{t\}$ in the subgraph $\Delta(H/K \times \cent{G/K}{H/K})$ of $\Delta(G)$, a contradiction.
If $U \leq K$, then  similarly (using also Proposition~\ref{CS}(c)) we get that $\cent{G/V}{U/V}$ has a normal section isomorphic to $H/K$ and we get a  contradiction in a similar way. 

Observe that, replacing \(H\) with a suitable term of its derived series, we can assume $H=H'$.

Let $C = \cent G{H/K}$; so  $G/C$ is an almost-simple group with socle isomorphic to $H/K$ and, by Proposition~\ref{W}, every  prime divisor of $|G/CH|$ is adjacent in $\Delta(G)$ to all the primes in both $\pi(u^{\alpha} -1)$ and $\pi(u^{\alpha} +1)$. Therefore, $G/CH$ is a $\pi'$-group. As $\Oh{\pi'}G = G$, we get $CH = G$. Thus we have $G/K = H/K \times C/K$, and this forces $\pi \cap \V{C/K} = \emptyset$; as a consequence, $C/K$ has a normal abelian Hall $\pi$-subgroup $D/K$, and again the assumption $\Oh{\pi'}G = G$ yields $C = D$. 
    
    In order to complete the proof,  we can assume that $G = H$ is perfect. 
In fact, \(H\) certainly satisfies our hypothesis, so, if \(H<G\), 
induction yields that \(H\) is isomorphic either to \(\PSL{u^{\alpha}}\) or to \(\SL{u^{\alpha}}\). 
Now, if \(2\in\pi\) (which implies \(2=u\)), then \(K=1\), and we get \(G=H\times D\); on the other hand, if \(2\not\in\pi\) and \(D_0\) is the Hall \(2'\)-subgroup of \(D\), it can be easily checked that \(G=H\times D_0\). In any case, the desired conclusion is satisfied by \(G\) and we are done.

Note that we can also assume $K > 1$, as $G/K \simeq \PSL{u^{\alpha}}$. We will show that then $G \simeq \SL{u^{\alpha}}$, 
with $u \neq 2$. Let $M < K$  be such that $K/M$ is a chief factor of $G$. 

Assume first $M = 1$, thus \(K\) is a minimal normal subgroup of \(G\). As \(G\) is perfect, it will be enough to show that $K \subseteq \zent G$: in this case, $G$ will be the Schur representation group of $G/K$, whence $G \simeq \SL{u^a}$. We observe that, in this situation, \(K\) is abelian. In fact, if \(K\) is non-abelian, then we know that \(K\) is a \(\pi'\)-group (see the third paragraph of this proof); therefore, as $\Oh{\pi'}G =G$, we cannot have \(G=K\cent G K\), and this forces \(\cent G K=1\). Now, \(K\) must be a simple group by Proposition~\ref{CS}(c) and so \(G/K\leq\out K\) would be solvable, which is not the case.  Thus \(K\) is (elementary) abelian, as claimed, and we are in a position to apply Lemma~\ref{newirreduciblemodule} in order to get the desired conclusion.

Assume finally that $M > 1$: then induction yields that 
$G/M \simeq \SL{u^a}$ with $u \neq 2$, and that $M$ is minimal normal in $G$. 
Again, essentially by the same argument as in the paragraph above, $M$ is abelian. Observe that \(M\) is certainly not central in \(G\), as the Schur multiplier of \(G/M\) is trivial, and now another application of Lemma~\ref{newirreduciblemodule} (with \(M\) in the role of \(K\)) yields the contradiction that completes the proof. 
\end{proof}

\end{document}